\theoremstyle{plain}
\newtheorem{theorem}{Theorem}
\newtheorem{lemma}{Lemma}
\newtheorem{corollary}{Corollary} 
\theoremstyle{definition}
\newtheorem{remark}{Remark}
\newtoks\thehProclaim%
\newtheorem*{Proclaim}{\the\thehProclaim}
\theoremstyle{definition}%
\newtoks{\thehRemark}%
\newtheorem*{Remark}{\the\thehRemark}%
\begin{document}

\dedicatory{Dedicated with great pleasure and respect to Vladimir Maz'ya on the occasion of his 80th birthday}

\title[Fatou-Type Theorems and Boundary Value Problems]
{Fatou-Type Theorems and Boundary Value Problems for Elliptic Systems in the Upper Half-Space}

\author{Jos\'e Mar{\'\i}a Martell}
\address{Jos\'e Mar{\'\i}a Martell
\\
Instituto de Ciencias Matem\'aticas 
\\
CSIC-UAM-UC3M-UCM
\\
Consejo Superior de Investigaciones Cient{\'\i}ficas
\\
C/ Nicol\'as Cabrera, 13-15
\\
E-28049 Madrid, Spain} 
\email{chema.martell@icmat.es}


\author{Dorina Mitrea}
\address{Dorina Mitrea
\\
Department of Mathematics
\\
University of Missouri
\\
Columbia, MO 65211, USA} 
\email{mitread@missouri.edu}

\author{Irina Mitrea}
\address{Irina Mitrea
\\
Department of Mathematics
\\
Temple University\!
\\
1805\,N.\,Broad\,Street
\\
Philadelphia, PA 19122, USA} 
\email{imitrea@temple.edu}

\author{Marius Mitrea}
\address{Marius Mitrea
\\
Department of Mathematics
\\
University of Missouri
\\
Columbia, MO 65211, USA} 
\email{mitream@missouri.edu}

\subjclass[2010]{Primary: 31A20, 35C15, 35J57, 42B37, 46E30. Secondary: 35B65, 42B25, 42B30, 42B35.}

\keywords{Fatou-type theorem, Dirichlet boundary value problem, elliptic system, Poisson kernel, 
nontangential maximal operator, nontangential boundary trace, Muckenhoupt weights, Hardy space, 
bounded mean oscillations, vanishing mean oscillations, subcritical growth, sublinear growth}

\begin{abstract}
We survey recent progress in a program which to date has produced \cite{Madrid}-\cite{B-MMMM}, 
aimed at proving general Fatou-type results and establishing the well-posedness of a variety of
boundary value problems in the upper half-space ${\mathbb{R}}^n_{+}$ for second-order, homogeneous, 
constant complex coefficient, elliptic systems $L$, formulated in a manner that emphasizes pointwise 
nontangential boundary traces of the null-solutions of $L$ in ${\mathbb{R}}^n_{+}$.
\end{abstract}

\thanks{The first author acknowledges that the research leading to these results has received funding 
from the European Research Council under the European Union's Seventh Framework Programme (FP7/2007-2013)/ERC 
agreement no. 615112 HAPDEGMT. He also acknowledges financial support from the Spanish Ministry of Economy 
and Competitiveness, through the ``Severo Ochoa Programme for Centres of Excellence in R\&D'' (SEV-2015-0554). 
The second author has been supported in part by Simons Foundation grant $\#\,$426669, 
the third author has been supported in part by Simons Foundation grant $\#\,$318658,
while the fourth author has been supported in part by Simons Foundation grant $\#\,$281566. 
This work has been possible thanks to the support 
and hospitality of \textit{Temple University} (USA), \textit{University of Missouri} (USA), and 
\textit{ICMAT, Consejo Superior de Investigaciones Cient{\'\i}ficas} (Spain). The authors 
express their gratitude to these institutions.}


\maketitle

\section{Introduction}
\label{S-1}

The topic of boundary value problems for elliptic operators in the upper half-space is a venerable subject 
which has received much attention throughout the years. While there is a wealth of results in which the 
smoothness of solutions and boundary data are measured on the scales of Sobolev, Besov, and Triebel-Lizorkin 
spaces (cf., e.g., \cite{ADNI}, \cite{ADNII}, \cite{FrRu}, \cite{Joh}, \cite{KMR1}, \cite{KMR2}, 
\cite{LionsMagenes}, \cite{Lop}, \cite{MaMiSh}, \cite{MazShap}, \cite{RuSi96}, \cite{Shap}, \cite{Sol1}, 
\cite{Sol2}, \cite{Taylor}, \cite{Tr83}, \cite{Tr95}, \cite{WRL} and the literature cited therein), the scenario 
in which the boundary traces are taken in a nontangential pointwise sense and the size of the solutions 
is measured using the nontangential maximal operator is considerably less understood. A notable exception 
is the case when the differential operator involved is the Laplacian, a situation dealt with at length 
in a number of monographs (cf., e.g., \cite{ABR}, \cite{GCRF85}, \cite{St70}, \cite{Stein93}, \cite{SW}). 
However, such undertakings always seem to employ rather specialized properties of harmonic functions, 
so new ideas are required when dealing with more general second-order elliptic systems in place of the 
Laplacian. In a sequence of recent works (cf. \cite{Madrid}, \cite{Holder-MMM}, \cite{H-MMMM}, 
\cite{K-MMMM}, 
\cite{S-MMMM}, \cite{BMO-MMMM}, \cite{SCGC}, \cite{B-MMMM}) the authors have systematically 
studied Fatou-type 
theorems and boundary value problems in the upper half-space for second-order elliptic homogeneous constant 
complex coefficient systems, in a formulation which emphasizes the nontangential pointwise behavior of 
the solutions on the boundary. 

The goal of this paper is to present for the first time a coherent, inclusive account of the 
progress registered so far in \cite{Madrid}-\cite{B-MMMM}. In \S\ref{S-2}, much attention is 
devoted to the topics of Poisson kernel and Fatou-type theorem. Complex problems typically 
call for a structured approach, and this is the path we follow vis-a-vis to the notion of 
Poisson kernel. Its original development is typically associated with the names of Agmon, Douglis, 
Nirenberg, Lopatinski\u{i}, Shapiro, Solonnikov, among others (cf. \cite{ADNI}-\cite{ADNII}, \cite{Lop}, 
\cite{Shap}-\cite{Sol2}), and here we further contribute to the study of Poisson kernels associated 
with second-order elliptic systems from the point of view of harmonic analysis. As regards the second 
topic of interest in \S\ref{S-2} mentioned earlier, recall that the trademark blueprint of a Fatou-type 
theorem is that certain size and integrability properties of a null-solution of an elliptic equation in 
a certain domain (often formulated in terms of the nontangential maximal operator) imply the a.e. existence 
of the pointwise nontangential boundary trace of the said function. Our Fatou-type theorems follow this 
design and are also quantitative in nature since the boundary trace does not just simply exist but encodes 
significant information regarding the size of the original function. 

In \S\ref{S-3} such results are used 
as tools for proving that a variety of boundary value problems for elliptic systems in the 
upper half-space 
are well-posed. In particular, here we monitor how the format of the problem changes
as the space of boundary data morphs from the Lebesgue scale $L^p$ with $1<p<\infty$, to the 
space of essentially bounded functions, to the space of functions of bounded mean oscillations 
and, further, to the space of H\"older continuous functions (or, more generally, the space of 
functions with sublinear growth). A significant number of results are new, and particular care is 
paid to understanding the extent to which the emerging theory is optimal. Along the way, a large 
number of relevant open problems are singled out for further study. 

We proceed to describe the class of systems employed in this work. 
Throughout, fix $n\in{\mathbb{N}}$ satisfying $n\geq 2$, along with $M\in{\mathbb{N}}$. 
Consider a second-order, homogeneous, $M\times M$ system, with constant complex coefficients, 
written (with the usual convention of summation
over repeated indices always in place, unless otherwise mentioned) as
\begin{equation}\label{L-def}
Lu:=\Bigl(\partial_r(a^{\alpha\beta}_{rs}\partial_s u_\beta)\Bigr)_{1\leq\alpha\leq M},
\end{equation}
when acting on $u=(u_\beta)_{1\leq\beta\leq M}$ whose components are distributions in an open 
subset of ${\mathbb{R}}^n$. Assume that $L$ is elliptic in the sense that there exists some 
$c\in(0,\infty)$ such that
\begin{equation}\label{L-ell.X}
\begin{array}{c}
{\rm Re}\,\bigl[a^{\alpha\beta}_{rs}\xi_r\xi_s\overline{\eta_\alpha}
\eta_\beta\,\bigr]\geq c|\xi|^2|\eta|^2\,\,\mbox{ for every}
\\[8pt]
\xi=(\xi_r)_{1\leq r\leq n}\in{\mathbb{R}}^n\,\,\mbox{ and }\,\,
\eta=(\eta_\alpha)_{1\leq\alpha\leq M}\in{\mathbb{C}}^M.
\end{array}
\end{equation}
Examples include scalar operators, such as the Laplacian $\Delta=\sum\limits_{j=1}^n\partial_j^2$ 
or, more generally, operators of the form ${\rm div}A\nabla$ with $A=(a_{rs})_{1\leq r,s\leq n}$ an 
$n\times n$ matrix with complex entries satisfying the ellipticity condition
\begin{equation}\label{YUjhv-753}
\inf_{\xi\in S^{n-1}}{\rm Re}\,\big[a_{rs}\xi_r\xi_s\bigr]>0,
\end{equation}
(where $S^{n-1}$ denotes the unit sphere in ${\mathbb{R}}^n$), as well as complex
versions of the Lam\'e system of elasticity
\begin{equation}\label{TYd-YG-76g}
\begin{array}{c}
L:=\mu\Delta+(\lambda+\mu)\nabla{\rm div}\,\,\text{ where the Lam\'e moduli }\,\,\lambda,\mu\in{\mathbb{C}}
\\[6pt]
\text{satisfy }\,\,{\rm Re}\,\mu>0\,\,\mbox{ and }\,\,{\rm Re}\,(2\mu+\lambda)>0.
\end{array}
\end{equation}
The last condition above is equivalent to the demand that the Lam\'e system \eqref{TYd-YG-76g} 
is Legendre-Hadamard elliptic (in the sense of \eqref{L-ell.X}). While the Lam\'e system is symmetric, 
we stress that the results in this paper require no symmetry for the systems involved. 

We shall work in the upper half-space
\begin{equation}\label{RRR-UpHs}
{\mathbb{R}}^{n}_{+}:=\big\{x=(x',x_n)\in
{\mathbb{R}}^{n}={\mathbb{R}}^{n-1}\times{\mathbb{R}}:\,x_n>0\big\}
\end{equation}
whose topological boundary we shall henceforth identify with the horizontal hyperplane ${\mathbb{R}}^{n-1}$ via 
$\partial{\mathbb{R}}^{n}_{+}\ni(x',0)\equiv x'\in{\mathbb{R}}^{n-1}$. The origin in ${\mathbb{R}}^{n-1}$ 
is denoted by $0'$, and we agree to let $B_{n-1}(x',r):=\{y'\in{\mathbb{R}}^{n-1}:\,|x'-y'|<r\}$ stand for 
the $(n-1)$-dimensional ball centered at $x'\in{\mathbb{R}}^{n-1}$ and of radius $r>0$.  
We shall also let ${\mathbb{N}}_0$ stand for the collection of all non-negative integers. 
Finally, we will adopt the standard custom of allowing the letter $C$ to denote constants 
which may vary from one occurrence to another.

\section{Poisson Kernels and General Fatou-Type Results}
\label{S-2}

Poisson kernels for elliptic operators in a half-space have a long history 
(see, e.g., \cite{ADNI}, \cite{ADNII}, \cite{Lop}, \cite{Shap}, \cite{Sol1}, \cite{Sol2}). 
In the theorem below we single out the most essential features which identify these objects uniquely. 

\begin{theorem}\label{thm:Poisson}
Let $L$ be an $M\times M$ homogeneous constant complex coefficient elliptic second-order system in ${\mathbb{R}}^n$. 
Then there exists a matrix-valued function
\begin{equation}\label{PP-OO}
P^L=\big(P^L_{\alpha\beta}\big)_{1\leq\alpha,\beta\leq M}:
\mathbb{R}^{n-1}\longrightarrow\mathbb{C}^{M\times M}
\end{equation}
{\rm (}called the Poisson kernel for $L$ in $\mathbb{R}^{n}_{+}${\rm )}
satisfying the following properties:
\begin{list}{$(\theenumi)$}{\usecounter{enumi}\leftmargin=.8cm
\labelwidth=.8cm\itemsep=0.2cm\topsep=.1cm
\renewcommand{\theenumi}{\alph{enumi}}}
\item There exists $C\in(0,\infty)$ such that
\begin{equation}\label{eq:IG6gy}
|P^L(x')|\leq\frac{C}{(1+|x'|^2)^{\frac{n}2}}\quad\mbox{for each }\,\,x'\in\mathbb{R}^{n-1}.
\end{equation}
\item The function $P^L$ is Lebesgue measurable and
\begin{equation}\label{eq:IG6gy.2}
\int_{\mathbb{R}^{n-1}}P^L(x')\,dx'=I_{M\times M},
\end{equation}
where $I_{M\times M}$ denotes the $M\times M$ identity matrix. 
\item If one sets
\begin{equation}\label{eq:Gvav7g5}
\begin{array}{c}
K^L(x',t):=P^L_t(x')=t^{1-n}P^L(x'/t)
\\[6pt]
\mbox{for each }\,\,x'\in\mathbb{R}^{n-1}\,\,\,\mbox{ and }\,\,t>0,
\end{array}
\end{equation}
then the $\mathbb{C}^{M\times M}$-valued function $K^L$ satisfies 
{\rm (}with $L$ acting on the columns of $K^L$ in the sense of distributions{\rm )}
\begin{equation}\label{uahgab-UBVCX}
LK^L=0\cdot I_{M\times M}\,\,\text{ in }\,\,\big[{\mathcal{D}}'(\mathbb{R}^{n}_{+})\big]^{M\times M}.
\end{equation}
\item The Poisson kernel $P^L$ is unique in the class of $\mathbb{C}^{M\times M}$-valued functions 
defined in ${\mathbb{R}}^{n-1}$ and satisfying $(a)$-$(c)$ above.
\end{list}
\end{theorem}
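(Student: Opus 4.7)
The plan is to construct $P^L$ by partial Fourier transform in the horizontal variable and then verify each of the stated axioms in turn. Applying the Fourier transform in $x'$ to $Lu=0$ in $\mathbb{R}^n_+$ reduces $L$ to a second-order constant-coefficient matrix ODE in $t=x_n$, parametrized by $\xi'\in\mathbb{R}^{n-1}$, of the form
\[
A\,\partial_t^2\widehat u+iB(\xi')\,\partial_t\widehat u-M(\xi')\,\widehat u=0,
\]
where $A=(a^{\alpha\beta}_{nn})_{\alpha,\beta}$ is invertible by the Legendre--Hadamard ellipticity \eqref{L-ell.X} (specialize with $\xi=e_n$), and the associated characteristic polynomial $\det(A\lambda^2+iB(\xi')\lambda-M(\xi'))$ has no real roots for $\xi'\neq 0'$, splitting evenly into $M$ roots in each open complex half-plane. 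One then selects the $M$-dimensional \emph{stable} subspace of solutions decaying as $t\to+\infty$ and, within it, the unique matrix-valued solution $\widehat K(\xi',\cdot)$ normalized by $\widehat K(\xi',0)=I_{M\times M}$. Homogeneity in $\xi'$ of the ODE coefficients forces $\widehat K(\xi',t)=m(t\xi')$ for a single symbol $m:\mathbb{R}^{n-1}\setminus\{0'\}\to\mathbb{C}^{M\times M}$, and $P^L$ is defined as the inverse Fourier transform of $m$ (with $m(0'):=I_{M\times M}$).

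Properties (b) and (c) are essentially built in. Item (c) holds by construction, and the self-similar identity $K^L(x',t)=t^{1-n}P^L(x'/t)$ is the spatial translation of $\widehat K(\xi',t)=m(t\xi')$ obtained via the change of variable $\eta'=t\xi'$ in the inversion integral. For (b) one has $\int_{\mathbb{R}^{n-1}}P^L\,dx'=m(0')=I_{M\times M}$. The decay bound (a) follows because $m$ is smooth away from the origin and homogeneous of degree zero, so one may realize $P^L$ via a contour integral of the resolvent $(A\lambda^2+iB(\xi')\lambda-M(\xi'))^{-1}$ against a suitable kernel and exploit analytic dependence in $\xi'$ to obtain the $(1+|x'|^2)^{-n/2}$ pointwise bound, matching the order of the Laplacian's Poisson kernel.

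For uniqueness, set $Q:=P_1^L-P_2^L$ and $K(x',t):=t^{1-n}Q(x'/t)$. Item (a) yields a bounded continuous Fourier transform $\widehat Q$, item (b) gives $\widehat Q(0')=0$, and item (c) translates to $\widehat K(\xi',t)=\widehat Q(t\xi')$ solving the matrix ODE above. For each fixed $\xi'\neq 0'$, boundedness of the curve $t\mapsto\widehat Q(t\xi')$ confines it to the $M$-dimensional stable subspace; the vanishing initial condition $\widehat Q(0')=0$ then forces $\widehat Q(t\xi')\equiv 0$, whence $Q\equiv 0$ by density of $\{t\xi':t>0,\ \xi'\neq 0'\}$ in $\mathbb{R}^{n-1}$ together with continuity of $\widehat Q$. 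The main obstacle I anticipate is controlling the symbol $m$ near the frequency origin: the stable subspace may branch where roots of the characteristic polynomial collide, so the construction must be carried out via a uniform-in-$\xi'$ resolvent representation (in the spirit of Agmon--Douglis--Nirenberg) to guarantee $m\in C^\infty(\mathbb{R}^{n-1}\setminus\{0'\})$ with estimates strong enough both to yield the sharp decay (a) and to legitimize the Fourier-analytic step in the uniqueness argument.
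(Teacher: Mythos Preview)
The paper itself does not prove this theorem; it attributes existence to Agmon--Douglis--Nirenberg \cite{ADNII} and uniqueness to \cite{K-MMMM}. Your construction via the partial Fourier transform and the stable subspace of the characteristic ODE is indeed the ADN blueprint, and your Fourier-analytic uniqueness argument is a legitimate route (possibly different from \cite{K-MMMM}). That said, there is a genuine error in your justification of the decay bound (a).

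You assert that the symbol $m$ is ``smooth away from the origin and homogeneous of degree zero.'' The second claim is false: if $m$ were homogeneous of degree zero then $\widehat K(\xi',t)=m(t\xi')$ would be independent of $t$, which is absurd. For the Laplacian one has $m(\xi')=e^{-|\xi'|}$, which is neither homogeneous nor smooth at the origin. The bound $|P^L(x')|\le C(1+|x'|^2)^{-n/2}$ is not a consequence of any homogeneity of $m$; it comes from the fact that $K^L$ extends smoothly to $\overline{\mathbb{R}^n_+}\setminus\{0\}$, is homogeneous of degree $1-n$ in the \emph{full} variable $(x',t)$, and vanishes on $\partial\mathbb{R}^n_+\setminus\{0\}$ (cf.\ item~(a) of Theorem~\ref{thm:Poisson.II}). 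Establishing that smooth extension down to the boundary is precisely where the ADN resolvent/contour-integral machinery and the complementing condition do real work, and your sketch does not supply it --- the obstacle you flag at the end is real, but its resolution is not a degree-zero homogeneity claim. Note also that your verification of (b) via $\int P^L\,dx'=m(0')$ presupposes $P^L\in L^1$, i.e., it depends on (a), so the gap in (a) propagates.

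On a smaller point: your uniqueness argument implicitly uses that within the $M$-dimensional stable subspace the evaluation map at $t=0$ is an isomorphism onto $\mathbb{C}^M$. This is the Lopatinski\u{\i}--Shapiro condition for the Dirichlet problem, which is indeed automatic under Legendre--Hadamard ellipticity, but it is worth naming explicitly rather than leaving tacit.
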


Concerning Theorem~\ref{thm:Poisson}, we note that the existence part follows from the classical 
work of S.\,Agmon, A.\,Douglis, and L.\,Nirenberg in \cite{ADNII} (cf. also \cite{Lop}, 
\cite{Shap}-\cite{Sol2}). The uniqueness property has been recently proved in \cite{K-MMMM}.

The Poisson kernel introduced above is the basic tool used to construct solutions for 
the Dirichlet problem for the system $L$ in the upper half-space. This is most 
apparent from Theorem~\ref{thm:Poisson.II} stated a little further below. 
For now, we proceed to define the nontangential maximal operator and the nontangential 
boundary trace. Specifically, having fixed some aperture parameter $\kappa>0$, at each 
point $x'\in\partial{\mathbb{R}}^{n}_{+}\equiv{\mathbb{R}}^{n-1}$ we define the conical
nontangential approach region with vertex at $x'$ as
\begin{equation}\label{NT-1}
\Gamma_\kappa(x'):=\big\{y=(y',t)\in{\mathbb{R}}^{n}_{+}:\,|x'-y'|<\kappa\,t\big\}.
\end{equation}
Given a continuous vector-valued function $u:{\mathbb{R}}^{n}_{+}\to{\mathbb{C}}^M$,
we then define the nontangential maximal operator acting on $u$ by setting 
\begin{equation}\label{NT-Fct}
\big({\mathcal{N}}_\kappa u\big)(x'):=\sup\big\{|u(y)|:\,y\in\Gamma_\kappa(x')\big\},\qquad
x'\in{\mathbb{R}}^{n-1}.
\end{equation}
We shall also need a version of the nontangential maximal operator in which 
the supremum is now taken over cones truncated near the vertex. Specifically, 
given a continuous vector-valued function $u:{\mathbb{R}}^{n}_{+}\to{\mathbb{C}}^M$,
for each $\varepsilon>0$ define 
\begin{equation}\label{NT-Fct-EP}
\big({\mathcal{N}}^{(\varepsilon)}_\kappa u\big)(x'):=\sup\big\{|u(y)|:\,
y=(y',t)\in\Gamma_\kappa(x')\,\text{ with }\,t>\varepsilon\big\}
\end{equation}
at each $x'\in{\mathbb{R}}^{n-1}$.
Whenever meaningful, the $\kappa$-nontangential pointwise boundary trace of 
a continuous vector-valued function $u:{\mathbb{R}}^{n}_{+}\to{\mathbb{C}}^M$ is given by
\begin{equation}\label{nkc-EE-2}
\Big(u\big|^{{}^{\kappa-{\rm n.t.}}}_{\partial{\mathbb{R}}^{n}_{+}}\Big)(x')
:=\lim_{\Gamma_{\kappa}(x')\ni y\to (x',0)}u(y)
\,\,\mbox{ for }\,\,x'\in\partial{\mathbb{R}}^{n}_{+}\equiv{\mathbb{R}}^{n-1}.
\end{equation}
It is then clear from definitions that for any continuous vector-valued function $u:{\mathbb{R}}^{n}_{+}\to{\mathbb{C}}^M$
and any $\varepsilon,\kappa>0$ we have 
\begin{equation}\label{NT-Fct-EP.anB}
\begin{array}{c}
{\mathcal{N}}^{(\varepsilon)}_\kappa u,\,\,\,{\mathcal{N}}_\kappa u
\,\,\text{ are lower semicontinuous}, 
\\[6pt]
0\leq{\mathcal{N}}^{(\varepsilon)}_\kappa u\leq{\mathcal{N}}_\kappa u
\,\,\text{ on }\,\,\partial{\mathbb{R}}^{n}_{+}\equiv{\mathbb{R}}^{n-1}.
\end{array}
\end{equation}
In addition, for each such function $u$ we have
\begin{equation}\label{6543}
\|u\|_{[L^\infty(\mathbb{R}^n_{+})]^M}=\|\mathcal{N}_\kappa u\|_{L^\infty(\mathbb{R}^{n-1})}.
\end{equation}
Finally, whenever the nontangential boundary trace exists, we have
\begin{equation}\label{nkc-EE-4}
\begin{array}{c}
u\big|^{{}^{\kappa-{\rm n.t.}}}_{\partial{\mathbb{R}}^n_{+}}\,\,\text{ is a Lebesgue measurable function} 
\\[6pt]
\text{and }\,\,\left|u\big|^{{}^{\kappa-{\rm n.t.}}}_{\partial{\mathbb{R}}^n_{+}}\right|\leq{\mathcal{N}}_\kappa u
\,\,\text{ on }\,\,\partial{\mathbb{R}}^{n}_{+}\equiv{\mathbb{R}}^{n-1}.
\end{array}
\end{equation}

Prior to stating our next result, we make some comments further clarifying notation and terminology.
Throughout, we agree to denote by $\mathcal{M}$ the Hardy-Littlewood maximal operator on $\mathbb{R}^{n-1}$.
This acts on each vector-valued function $f$ with components in $L^1_{\rm loc}({\mathbb{R}}^{n-1})$ according to
\begin{equation}\label{MMax}
\big(\mathcal{M}f\big)(x'):=\sup_{Q\ni x'}\frac{1}{{\mathcal{L}}^{n-1}(Q)}
\int_Q|f|\,d{\mathcal{L}}^{n-1},\qquad\forall\,x'\in\mathbb{R}^{n-1},
\end{equation}
where the supremum runs over all cubes $Q$ in $\mathbb{R}^{n-1}$ containing $x'$,
and where ${\mathcal{L}}^{n-1}$ denotes the $(n-1)$-dimensional Lebesgue measure 
in ${\mathbb{R}}^{n-1}$. 

Next, pick some integrability exponent $q\in(1,\infty)$ (whose actual choice is ultimately immaterial), 
and fix an arbitrary $p\in\big(\tfrac{n-1}{n}\,,\,1\big]$. Recall that a Lebesgue measurable function 
$a:\mathbb{R}^{n-1}\rightarrow\mathbb{C}$ is said to be an $(p,q)$-atom if 
for some cube $Q\subset\mathbb{R}^{n-1}$ one has
\begin{equation}\label{defi-atom}
{\rm supp}\,a\subset Q,\quad\|a\|_{L^q(\mathbb{R}^{n-1})}\leq{\mathcal{L}}^{n-1}(Q)^{1/q-1/p}, 
\quad\int_{\mathbb{R}^{n-1}}a\,d{\mathcal{L}}^{n-1}=0.
\end{equation}
One may then define the Hardy space $H^p(\mathbb{R}^{n-1})$ as the collection of all tempered 
distributions $f\in{\mathcal{S}}'(\mathbb{R}^{n-1})$ which may be written as 
\begin{equation}\label{66tt}
f=\sum_{j\in{\mathbb{N}}}\lambda_j\,a_j\,\,\text{ in }\,\,{\mathcal{S}}'(\mathbb{R}^{n-1})
\end{equation}
for some sequence $\{a_j\}_{j\in{\mathbb{N}}}$ of $(p,q)$-atoms and a sequence  
$\{\lambda_j\}_{j\in{\mathbb{N}}}\in\ell^p$. For each $f\in H^p(\mathbb{R}^{n-1})$ we then set 
$\|f\|_{H^p(\mathbb{R}^{n-1})}:=\inf\Big(\sum_{j\in{\mathbb{N}}}|\lambda_j|^p\Big)^{1/p}$ with the 
infimum taken over all atomic decompositions of $f$ as $\sum_{j\in{\mathbb{N}}}\lambda_j\,a_j$.

In relation to this we wish to make three comments. First, the very definition of the quasi-norm 
$\|\cdot\|_{H^p(\mathbb{R}^{n-1})}$ implies that whenever $f\in H^p(\mathbb{R}^{n-1})$ is written 
as in \eqref{66tt} then the series actually converges in $H^p(\mathbb{R}^{n-1})$. Second, from the 
definition of $\|\cdot\|_{H^p(\mathbb{R}^{n-1})}$ we also see that each $f\in H^p(\mathbb{R}^{n-1})$ 
has a quasi-optimal atomic decomposition, i.e., $f$ may be written as in \eqref{66tt} with 
\begin{equation}\label{66tt.222}
\frac{1}{2}\Big(\sum_{j\in{\mathbb{N}}}|\lambda_j|^p\Big)^{1/p}
\leq\|f\|_{H^p(\mathbb{R}^{n-1})}\leq\Big(\sum_{j\in{\mathbb{N}}}|\lambda_j|^p\Big)^{1/p}.
\end{equation}
Third, consider the vector case, i.e., the space $\big[H^p({\mathbb{R}}^{n-1})\big]^M$. 
In such a setting we find it convenient to work with $\mathbb{C}^M$-valued $(p,q)$-atoms. Specifically, 
these are functions 
\begin{equation}\label{defi-atom-CM}
\begin{array}{c}
a\in\big[L^q(\mathbb{R}^{n-1})\big]^M\,\,\text{ such that for some cube $Q\subset\mathbb{R}^{n-1}$ one has}
\\[6pt]
{\rm supp}\,a\subset Q,\quad\|a\|_{[L^q(\mathbb{R}^{n-1})]^M}\leq{\mathcal{L}}^{n-1}(Q)^{1/q-1/p}, 
\\[6pt]
\text{and }\displaystyle\int_{\mathbb{R}^{n-1}}a\,d{\mathcal{L}}^{n-1}=0\in{\mathbb{C}}^M.
\end{array}
\end{equation}
Suppose now that some $f=(f_\beta)_{1\leq\beta\leq M}\in\big[H^p({\mathbb{R}}^{n-1})\big]^M$ has been given. 
Then each $f_\beta$ has an atomic decomposition $f_\beta=\sum_{j=1}^\infty\lambda_{\beta j}a_{\beta j}$ 
(no summation on $\beta$ here) where each $a_{\beta j}$ is a $(p,q)$-atom and 
$\{\lambda_{\beta j}\}_{j\in{\mathbb{N}}}\in\ell^p$, which is quasi-optimal,
hence 
\begin{align}\label{exist:u-123.Wer.1aaa}
\|f_\beta\|_{H^p({\mathbb{R}}^{n-1})}\approx\big(\sum_{j=1}^\infty|\lambda_{\beta j}|^p\big)^{1/p}
\,\,\text{ for each }\,\,\beta\in\{1,\dots,M\}.
\end{align}
Using the Kronecker symbol formalism, introduce 
${\mathbf{e}}_\beta:=(\delta_{\gamma\beta})_{1\leq\gamma\leq M}\in{\mathbb{C}}^M$
for each index $\beta\in\{1,\dots,M\}$, then write 
\begin{align}\label{exist:u-123.Wer.1}
f=\sum_{\beta=1}^Mf_\beta{\mathbf{e}}_\beta
=\sum_{\beta=1}^M\sum_{j=1}^\infty\lambda_{\beta j}a_{\beta j}{\mathbf{e}}_\beta
=\sum_{\beta=1}^M\sum_{j=1}^\infty\lambda_{\beta j}A_{\beta j}
\end{align}
with convergence in $\big[H^p({\mathbb{R}}^{n-1})\big]^M$, where 
\begin{align}\label{exist:u-123.Wer.2}
A_{\beta j}:=a_{\beta j}{\mathbf{e}}_\beta\,\,\text{ for each }\,\,
\beta\in\{1,\dots,M\}\,\,\text{ and }\,\,j\in{\mathbb{N}}
\end{align}
are ${\mathbb{C}}^M$-valued functions as in \eqref{defi-atom-CM}, hence $\mathbb{C}^M$-valued $(p,q)$-atoms. 
If we then relabel the sequences $\big\{A_{\beta j}\big\}_{\substack{1\leq\beta\leq M\\ j\in{\mathbb{N}}}}$ 
and $\big\{\lambda_{\beta j}\big\}_{\substack{1\leq\beta\leq M\\ j\in{\mathbb{N}}}}$ simply as
$\big\{a_j\big\}_{j\in{\mathbb{N}}}$ and $\big\{\lambda_j\big\}_{j\in{\mathbb{N}}}$, respectively, 
we may re-cast  \eqref{exist:u-123.Wer.1aaa}-\eqref{exist:u-123.Wer.2} as 
\begin{equation}\label{exist:u-123.Wer.3}
\begin{array}{c}
f=\sum_{j=1}^\infty\lambda_ja_j\,\,\text{ with convergence in }\,\,\big[H^p({\mathbb{R}}^{n-1})\big]^M,
\\[6pt]
\text{where each $a_j$ is a $\mathbb{C}^M$-valued $(p,q)$-atom (cf. \eqref{defi-atom-CM})},
\\[6pt]
\text{and }\,\,\|f\|_{[H^p({\mathbb{R}}^{n-1})]^M}\approx\big(\sum_{j=1}^\infty|\lambda_j|^p\big)^{1/p}.
\end{array}
\end{equation}

Since the Poisson kernel $P^L$ and the kernel function $K^L$ from Theorem~\ref{thm:Poisson} 
are of fundamental importance to the work described in this paper, a more in-depth analysis 
of their main properties is in order. Before stating our theorem addressing this analysis, 
for each real number $m$ we agree to denote 
by $L^1\big({\mathbb{R}}^{n-1}\,,\,\tfrac{dx'}{1+|x'|^{m}}\big)$ the space of 
${\mathbb{C}}$-valued Lebesgue measurable functions which are absolutely integrable in 
${\mathbb{R}}^{n-1}$ with respect to the weighted Lebesgue measure $\tfrac{dx'}{1+|x'|^{m}}$. 

\begin{theorem}\label{thm:Poisson.II}
Let $L$ be an $M\times M$ homogeneous constant complex coefficient elliptic second-order system in ${\mathbb{R}}^n$. 
Then the Agmon-Douglis-Nirenberg Poisson kernel $P^L$ and the kernel function $K^L$ from Theorem~\ref{thm:Poisson} 
satisfy the following properties:
\begin{list}{$(\theenumi)$}{\usecounter{enumi}\leftmargin=.8cm
\labelwidth=.8cm\itemsep=0.2cm\topsep=.1cm
\renewcommand{\theenumi}{\alph{enumi}}}
\item The function $P^L$ belongs to $\big[{\mathcal{C}}^\infty(\mathbb{R}^{n-1})\big]^{M\times M}$ and 
satisfies the following non-dege\-neracy property: 
\begin{equation}\label{eq:IG6gy.2-Lambda}
\parbox{6.90cm}{for each $a\in{\mathbb{C}}^M\setminus\{0\}$ one may find some $\lambda>0$ such that 
$\int_{S^{n-2}}\big|P^L(\lambda\omega)a\big|\,d\omega>0$.}
\end{equation}
One may extend $K^L$ to a function belonging to 
$\big[{\mathcal{C}}^\infty\big(\overline{{\mathbb{R}}^n_{+}}\setminus\{0\}\big)\big]^{M\times M}$.
Consequently, formula \eqref{uahgab-UBVCX} also holds in a pointwise sense in ${\mathbb{R}}^n_{+}$. 
Moreover, there exists some constant $C\in(0,\infty)$ such that 
\begin{equation}\label{Uddcv}
|K^L(x',t)|\leq Ct/(t^2+|x'|^2)^{n/2}\,\,\text{ for each }
\,\,(x',t)\in{\mathbb{R}}^n_{+},
\end{equation}
an estimate which further implies $K^L(x',0)=0$ for each 
$x'\in{\mathbb{R}}^{n-1}\setminus\{0'\}$. In addition, one has 
$\int_{\mathbb{R}^{n-1}}K^L(x'-y',t)\,dy'=I_{M\times M}$ for all $(x',t)\in{\mathbb{R}}^n_{+}$, 
as well as $K^L(\lambda x)=\lambda^{1-n}K^L(x)$ for all $x\in{\mathbb{R}}^n_{+}$ and $\lambda>0$. 
In particular, for each multi-index $\alpha\in{\mathbb{N}}_0^n$ there exists 
$C_\alpha\in(0,\infty)$ with the property that
\begin{equation}\label{eq:Kest}
\big|(\partial^\alpha K^L)(x)\big|\leq C_\alpha\,|x|^{1-n-|\alpha|},\qquad
\forall\,x\in{\overline{{\mathbb{R}}^n_{+}}}\setminus\{0\}.
\end{equation}
\item The following semi-group property holds:
\begin{equation}\label{u7tffa}
P^L_{t_0+t_1}=P^L_{t_0}\ast P^L_{t_1}\,\,\text{ for all }\,\,t_0,t_1>0.
\end{equation}
\item Given a Lebesgue measurable function $f=(f_\beta)_{1\leq\beta\leq M}:\mathbb{R}^{n-1}\rightarrow\mathbb{C}^M$ 
satisfying
\begin{equation}\label{exist:f}
\int_{\mathbb{R}^{n-1}}\frac{|f(x')|}{1+|x'|^n}\,dx'<\infty,
\end{equation}
at each point $(x',t)\in{\mathbb{R}}^n_{+}$ set
\begin{align}\label{exist:u}
u(x',t) &:=(P^L_t\ast f)(x')
\nonumber\\[6pt]
&:=\Bigg(\int_{{\mathbb{R}}^{n-1}}t^{1-n}P^L_{\alpha\beta}\big((x'-y')/t\big)f_\beta(y')\,dy'\Bigg)_{1\leq\alpha\leq M}.
\end{align}
Then $u:\mathbb{R}^n_{+}\to\mathbb{C}^M$ is meaningfully defined via an absolutely convergent integral,
satisfies {\rm (}for each given aperture parameter $\kappa>0${\rm )}
\begin{equation}\label{exist:u2}
\begin{array}{c}
u\in\big[\mathcal{C}^\infty(\mathbb{R}^n_{+})\big]^M,\quad
Lu=0\,\,\mbox{ in }\,\,\mathbb{R}^{n}_{+},
\\[8pt]
\text{and }\,\,
u\big|_{\partial\mathbb{R}^{n}_{+}}^{{}^{\kappa-{\rm n.t.}}}=f
\,\,\mbox{ at each Lebesgue point of $f$}
\\[6pt]
{\rm (}\text{hence, in particular, at ${\mathcal{L}}^{n-1}$-a.e. point in $\mathbb{R}^{n-1}$}{\rm )},
\end{array}
\end{equation}
and there exists a constant $C=C(L,\kappa)\in(0,\infty)$ with the property that
\begin{equation}\label{exist:Nu-Mf}
\big(\mathcal{N}_\kappa u\big)(x')\leq C\,\mathcal{M} f(x')\,\,\text{ for each }\,\,\,x'\in\mathbb{R}^{n-1}.
\end{equation}
Also, if $L=\Delta$, the Laplacian in ${\mathbb{R}}^n$, then the opposite inequality in \eqref{exist:Nu-Mf}
is true as well. Furthermore, the following unrestricted convergence result holds 
\begin{equation}\label{exist:Nu-Mf-LIM}
\begin{array}{c}
\lim\limits_{{\mathbb{R}}^n_{+}\ni x\to (x'_0,0)}u(x)=f(x'_0)
\\[6pt]
\text{if $x'_0\in{\mathbb{R}}^{n-1}$ is a continuity point for $f$}.
\end{array}
\end{equation}
In other words, $u$ given by \eqref{exist:u} extends by continuity to 
${\mathbb{R}}^n_{+}\cup\{(x'_0,0)\}$ whenever $x'_0\in{\mathbb{R}}^{n-1}$ is a continuity point for $f$.
In particular, 
\begin{equation}\label{exist:Nu-Mf-LIM.222}
\parbox{10.00cm}{whenever 
$f\in\Big[L^1\Big({\mathbb{R}}^{n-1}\,,\,\frac{dx'}{1+|x'|^{n}}\Big)\cap{\mathcal{C}}^0({\mathbb{R}}^{n-1})\Big]^M$ 
and $u$ is as in \eqref{exist:u}, then $u$ extends uniquely to a function in 
$\big[\mathcal{C}^0(\overline{\mathbb{R}^n_{+}})\big]^M$.}
\end{equation} 
\item For each $p\in\big(\tfrac{n-1}{n}\,,\,1\big]$ and each $\alpha\in{\mathbb{N}}_0^n$ with $|\alpha|>0$,  
the kernel function $K^L$ satisfies
\begin{equation}\label{grefr}
\begin{array}{c}
(\partial^\alpha K^L)(x'-\cdot,t)\in\big[H^p(\mathbb{R}^{n-1})\big]^{M\times M}
\,\text{ for all }\,(x',t)\in{\mathbb{R}}^n_{+},\,\text{ and}
\\[6pt]
\sup\limits_{(x',t)\in{\mathbb{R}}^n_{+}}
\big\|t^{|\alpha|-(n-1)(\frac1p-1)}(\partial^\alpha K^L)(x'-\cdot,t)\big\|_{[H^p(\mathbb{R}^{n-1})]^{M\times M}}<\infty.
\end{array}
\end{equation}
In fact, for each $p\in\big(\tfrac{n-1}{n}\,,\,1\big]$, each $q\in[1,\infty]$ with $q>p$, each 
$\alpha\in{\mathbb{N}}_0^n$ with $|\alpha|>0$, and each $(x',t)\in{\mathbb{R}}^n_{+}$, 
the function 
\begin{equation}\label{lY54dvb}
m^\alpha_{x',t}:=t^{|\alpha|-(n-1)(\frac1p-1)}(\partial^\alpha K^L)(x'-\cdot,t)
\end{equation}
is, up to multiplication by some fixed constant $C\in(0,\infty)$ 
{\rm (}which depends exclusively on $L,p,q,n,\alpha${\rm )}, 
a ${\mathbb{C}}^{M\times M}$-valued $L^q$-normalized molecule relative to the ball 
$B_{n-1}(x',t)$ for the Hardy space $\big[H^p(\mathbb{R}^{n-1})\big]^{M\times M}$. 
More precisely, 
\begin{equation}\label{RWWQD-bb}
\int_{\mathbb{R}^{n-1}}m^\alpha_{x',t}\,d{\mathcal{L}}^{n-1}=0\cdot I_{M\times M},
\end{equation}
and there exists $C\in(0,\infty)$ such that one has
\begin{equation}\label{RWWQD-cc}
\big\|m^\alpha_{x',t}\big\|_{[L^q(B_{n-1}(x',t))]^{M\times M}}
\leq C{\mathcal{L}}^{n-1}\big(B_{n-1}(x',t)\big)^{\frac1q-\frac1p},
\end{equation}
and, using the abbreviation $\varepsilon:=|\alpha|/(n-1)$, for each $k\in{\mathbb{N}}$ one also has
\begin{align}\label{RWWQD-cc.222}
&\hskip -0.30in
\big\|m^\alpha_{x',t}\big\|_{[L^q(B_{n-1}(x',2^{k}t)\setminus B_{n-1}(x',2^{k-1}t))]^{M\times M}}
\nonumber\\[6pt]
&\hskip 0.80in
\leq C2^{k(n-1)\big(\frac1q-1-\varepsilon\big)}{\mathcal{L}}^{n-1}\big(B_{n-1}(x',t)\big)^{\frac1q-\frac1p}.
\end{align}

\item Given any $\alpha\in{\mathbb{N}}_0^n$, for each fixed $t>0$ the function 
$(\partial^\alpha K^L)(\cdot,t)$ belongs to the H\"older space 
$\big[{\mathcal{C}}^\theta({\mathbb{R}}^{n-1})\big]^{M\times M}$ for each 
exponent $\theta\in(0,1)$. As a consequence of this, given an arbitrary 
$f=(f_\beta)_{1\leq\beta\leq M}\in\big[H^p(\mathbb{R}^{n-1})\big]^{M}$ with 
$p\in\big(\tfrac{n-1}{n}\,,\,1\big]$, one may meaningfully define
\begin{align}\label{exist:u-123}
u(x',t) &:=(P^L_t\ast f)(x')
\\[6pt]
&:=\Bigg\{\Big\langle f_\beta\,,\,\big[K^L_{\alpha\beta}(x'-\cdot,t)\big]\Big\rangle\Bigg\}_{1\leq\alpha\leq M}
\,\text{ for }\,(x',t)\in{\mathbb{R}}^n_{+},
\nonumber
\end{align}
where $\langle\cdot,\cdot\rangle$ is the pairing between distributions belonging to the Hardy space 
$H^p(\mathbb{R}^{n-1})$ and equivalence classes {\rm (}modulo constants{\rm )} of functions belonging to 
the homogeneous H\"older space $\dot{\mathcal{C}}^{(n-1)(1/p-1)}({\mathbb{R}}^{n-1})$ if $p<1$, and 
to ${\rm BMO}({\mathbb{R}}^{n-1})$ if $p=1$ {\rm (}cf., e.g., \cite[Theorem~5.30, p.307]{GCRF85}{\rm )}. Then 
\begin{equation}\label{exist:u2-Hp}
\begin{array}{c}
u\in\big[\mathcal{C}^\infty(\mathbb{R}^n_{+})\big]^M,\quad
Lu=0\,\,\mbox{ in }\,\,\mathbb{R}^{n}_{+},
\\[8pt]
\text{and }\,\,u\big|_{\partial\mathbb{R}^{n}_{+}}^{{}^{\kappa-{\rm n.t.}}}
\,\,\mbox{ exists ${\mathcal{L}}^{n-1}$-a.e. in $\mathbb{R}^{n-1}$}.
\end{array}
\end{equation}
Moreover, there exists a constant $C=C(L,\kappa,p)\in(0,\infty)$ with the property that
\begin{equation}\label{exist:Nu-Mf-Hp}
\begin{array}{c}
\big\|\mathcal{N}_\kappa u\big\|_{L^p(\mathbb{R}^{n-1})}
\leq C\|f\|_{[H^p(\mathbb{R}^{n-1})]^M}\,\,\text{ whenever}
\\[6pt]
f\in\big[H^p(\mathbb{R}^{n-1})\big]^{M}\,\,\text{ and $u$ is as in \eqref{exist:u-123}.}
\end{array}
\end{equation}
\end{list}
\end{theorem}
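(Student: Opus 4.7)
The smoothness $P^L\in\mathcal{C}^\infty$ follows from interior elliptic regularity for $L$ applied to $K^L$ (a null-solution in $\mathbb{R}^n_+$ by Theorem~\ref{thm:Poisson}) combined with the slice formula $P^L(x')=K^L(x',1)$. The size bound \eqref{Uddcv}, the homogeneity $K^L(\lambda x)=\lambda^{1-n}K^L(x)$, and the normalization of $K^L$ are immediate from \eqref{eq:IG6gy}, \eqref{eq:Gvav7g5}, and \eqref{eq:IG6gy.2}; the derivative bounds \eqref{eq:Kest} then come from standard interior estimates for $L$ on balls $B(x,|x|/2)\subset\mathbb{R}^n_+$ combined with homogeneity. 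For the non-degeneracy \eqref{eq:IG6gy.2-Lambda}, a negation together with the continuity of $P^L$ would force $P^L(x')a\equiv 0$, contradicting $\int P^L\,dx'\cdot a=a\neq 0$. For (b), the cleanest route is via partial Fourier transform in $x'$: the constant-coefficient structure of $L$ reduces $LP^L_t=0$ to an ODE in $t$ for each $\xi'$, so $\widehat{P^L_t}$ is an exponential matrix semigroup in $t$, and \eqref{u7tffa} is Parseval's identity; alternatively, one invokes uniqueness of bounded Dirichlet solutions.

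\textbf{Part (c).} Absolute convergence of \eqref{exist:u} follows from \eqref{Uddcv} paired with \eqref{exist:f}. Smoothness of $u$ and $Lu=0$ come from differentiating inside the integral (justified by \eqref{eq:Kest}) together with $LK^L=0$. The pointwise bound \eqref{exist:Nu-Mf} and the nontangential trace statement in \eqref{exist:u2} follow from a standard approximation-of-identity argument: the majorant $Ct/(t^2+|x'|^2)^{n/2}$ is of classical Poisson-kernel type, so after splitting the convolution into dyadic annuli around $x'$ one controls each piece by $\mathcal{M}f$; Lebesgue differentiation then delivers the nontangential trace at Lebesgue points of $f$. The converse bound $\mathcal{M}f\lesssim\mathcal{N}_\kappa u$ for $L=\Delta$ is classical. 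The unrestricted convergence \eqref{exist:Nu-Mf-LIM} at a continuity point $x_0'$ is proved by splitting into a near piece (where continuity of $f$ at $x_0'$ combined with the normalization of $K^L$ gives the limit $f(x_0')$) and a tail (vanishing as $t\to 0$ by the decay of $K^L$); this also yields \eqref{exist:Nu-Mf-LIM.222}.

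\textbf{Part (d).} The cancellation \eqref{RWWQD-bb} comes from applying $\partial^\alpha$ to the normalization $\int K^L(\cdot,t)\,dy'=I_{M\times M}$: spatial derivatives pass inside thanks to the decay from \eqref{eq:Kest}, and the $t$-derivative of a $t$-constant is zero. For \eqref{RWWQD-cc} and \eqref{RWWQD-cc.222}, \eqref{eq:Kest} yields $|\partial^\alpha K^L(x'-y',t)|\leq C(|x'-y'|+t)^{1-n-|\alpha|}$, which on $B_{n-1}(x',t)$ contributes $L^q$-norm at most $Ct^{1-n-|\alpha|+(n-1)/q}$ and on the annulus $B_{n-1}(x',2^kt)\setminus B_{n-1}(x',2^{k-1}t)$ at most $C(2^kt)^{1-n-|\alpha|+(n-1)/q}$. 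Multiplying by the normalization factor $t^{|\alpha|-(n-1)(1/p-1)}$ produces precisely the volume factor $\mathcal{L}^{n-1}(B_{n-1}(x',t))^{1/q-1/p}$ and the exponential decay $2^{k(n-1)(1/q-1-|\alpha|/(n-1))}$ in $k$. The membership \eqref{grefr} then follows from the classical fact that $L^q$-normalized molecules with vanishing mean represent elements of $H^p$ with uniformly controlled quasi-norm.

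\textbf{Part (e).} H\"older continuity of $(\partial^\alpha K^L)(\cdot,t)$ at every exponent $\theta\in(0,1)$ for fixed $t>0$ follows from $\mathcal{C}^\infty$-smoothness and \eqref{eq:Kest} via rescaling, so the pairing in \eqref{exist:u-123} is meaningful through $H^p$/$\dot{\mathcal{C}}^{(n-1)(1/p-1)}$ (or $H^p$/${\rm BMO}$) duality; then $u\in\mathcal{C}^\infty$ and $Lu=0$ follow by differentiating inside the pairing together with $LK^L=0$. The main technical obstacle is the key bound \eqref{exist:Nu-Mf-Hp}. The plan is to combine the atomic decomposition \eqref{exist:u-123.Wer.3} with the $p$-subadditivity of $\|\mathcal{N}_\kappa\cdot\|_{L^p}^p$ (for $p\leq 1$) to reduce matters to a uniform single-atom estimate $\|\mathcal{N}_\kappa(P^L_\cdot\ast a)\|_{L^p(\mathbb{R}^{n-1})}\leq C$ over all $\mathbb{C}^M$-valued $(p,q)$-atoms $a$ supported in a cube $Q\subset\mathbb{R}^{n-1}$. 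This single-atom estimate is handled by splitting $\mathbb{R}^{n-1}$ into $2Q$ and its complement: on $2Q$, one uses H\"older's inequality together with the $L^q$-bound on $\mathcal{N}_\kappa(P^L_\cdot\ast a)$ provided by \eqref{exist:Nu-Mf}; off $2Q$, one exploits $\int a=0$ to Taylor-expand $K^L(\cdot-y',t)$ around the center $x_Q$ of $Q$ and bounds the remainder via \eqref{eq:Kest}, obtaining a pointwise decay estimate for $\mathcal{N}_\kappa(P^L_\cdot\ast a)(x')$ in terms of a negative power of $|x'-x_Q|$ that is $L^p$-integrable off $2Q$ precisely because $p>(n-1)/n$. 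Once \eqref{exist:Nu-Mf-Hp} is in hand, the $\mathcal{L}^{n-1}$-a.e.\ existence of the nontangential trace for arbitrary $f\in H^p$ follows by reducing to atoms (where the trace exists from part (c), since every atom lies in $L^1(\mathbb{R}^{n-1},dx'/(1+|x'|^n))$) and passing to the limit via the quantitative control \eqref{exist:Nu-Mf-Hp} in a Fatou-type density argument.
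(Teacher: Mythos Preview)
Your proposal is correct and follows essentially the same route as the paper. For items (a)--(c) the paper simply defers to the references \cite{K-MMMM}, \cite{S-MMMM}, \cite{SCGC}, \cite{B-MMMM}, whereas you supply brief (and valid) sketches; for items (d)--(e) your argument coincides with the paper's explicit proof almost line for line: the molecule estimates via \eqref{eq:Kest}, the single-atom bound by splitting into a neighborhood of $Q$ (the paper uses $\sqrt{n}\,Q$, you use $2Q$---immaterial) versus its complement, the use of the vanishing moment plus the Mean Value Theorem for the far piece, the passage to general $f\in H^p$ via $p$-subadditivity of $\|\mathcal{N}_\kappa\cdot\|_{L^p}^p$, and the density argument for a.e.\ existence of the nontangential trace.
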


We wish to note that, in sharp contrast with \eqref{exist:u2}, in the context of \eqref{exist:u2-Hp} we no 
longer expect the nontangential pointwise trace $u\big|_{\partial\mathbb{R}^{n}_{+}}^{{}^{\kappa-{\rm n.t.}}}$
to be directly related to the (generally speaking tempered distribution) $f\in\big[H^p(\mathbb{R}^{n-1})\big]^M$.
For example, in \eqref{FCT-TR.nnn}-\eqref{rt-vba-jg} we present an example in which the said 
trace vanishes at ${\mathcal{L}}^{n-1}$-a.e. point in $\mathbb{R}^{n-1}$ even though $f\not=0$. 

\begin{proof}[Proof of Theorem~\ref{thm:Poisson.II}]
For items {\it (a)}-{\it (c)} see \cite{K-MMMM}, \cite{S-MMMM}, \cite{SCGC}, \cite{B-MMMM}. 
To deal with the claims in item {\it (d)}, fix $p,q,\alpha,x',t$ as in the statement. Then 
\begin{align}\label{est-theta-vanish}
\int_{\mathbb{R}^{n-1}}m^\alpha_{x',t}(y')\,dy'
=t^{|\alpha|-(n-1)(\frac{1}{p}-1)}\,\partial^\alpha\int_{\mathbb{R}^{n-1}}K^L(x'-y',t)\,dy'=0
\end{align}
since $\int_{\mathbb{R}^{n-1}}K^L(x'-y',t)\,dy'=I_{M\times M}$ and $|\alpha|>0$. 
This proves \eqref{RWWQD-bb}. Also, based on \eqref{eq:Kest} we may estimate 
\begin{align}\label{RWWQD-dd}
\int_{B_{n-1}(x',t)}\big|m^\alpha_{x',t}(y')\big|^q\,dy'
&\leq C\,\int_{B_{n-1}(x',t)}\frac{t^{q|\alpha|-q(n-1)(\frac{1}{p}-1)}}{(t+|x'-y'|)^{q(n-1+|\alpha|)}}\,dy'
\nonumber\\[4pt]
&\leq C\,\int_{B_{n-1}(x',t)}\frac{t^{q|\alpha|-q(n-1)(\frac{1}{p}-1)}}{t^{q(n-1+|\alpha|)}}\,dy'
\nonumber\\[4pt]
&=C\Big[{\mathcal{L}}^{n-1}\big(B_{n-1}(x',t)\big)^{\frac1q-\frac1p}\Big]^q,
\end{align}
and, if $\varepsilon:=|\alpha|/(n-1)$, for every $k\in{\mathbb{N}}$ we may write 
\begin{align}\label{RWWQD-ee}
\int_{B_{n-1}(x',2^kt)\setminus B_{n-1}(x',2^{k-1}t)} &\big|m^\alpha_{x',t}(y')\big|^q\,dy'
\nonumber\\[4pt]
&\hskip -0.60in
\leq C\,\int_{2^{k-1}\,t<|x'-y'|<2^k\,t}\frac{t^{q|\alpha|-q(n-1)(\frac{1}{p}-1)}}{(t+|x'-y'|)^{q(n-1+|\alpha|)}}\,dy'
\nonumber\\[4pt]
&\hskip -0.60in
\leq C\int_{B_{n-1}(x',2^kt)}\frac{t^{q|\alpha|-q(n-1)(\frac{1}{p}-1)}}{(2^k\,t)^{q(n-1+|\alpha|)}}\,dy'
\nonumber\\[4pt]
&\hskip -0.60in
=C\Big[2^{k(n-1)\big(\frac1q-1-\varepsilon\big)}{\mathcal{L}}^{n-1}\big(B_{n-1}(x',t)\big)^{\frac1q-\frac1p}\Big]^q,
\end{align}
for some constant $C\in(0,\infty)$ independent of $k$, $x'$, and $t$. From these, 
the estimates claimed in \eqref{RWWQD-cc}, \eqref{RWWQD-cc.222} readily follow. 
Going further, the first claim in item {\it (e)} is a consequence of the fact that, 
as seen from \eqref{eq:Kest}, for each $\alpha\in{\mathbb{N}}_0^n$ there exists 
$C_\alpha\in(0,\infty)$ such that 
$\big\|(\partial^\alpha K^L)(\cdot,t)\big\|_{[L^\infty({\mathbb{R}}^{n-1})]^{M\times M}}\leq C_\alpha t^{1-n-|\alpha|}$ 
for every $t>0$, together with an elementary observation to the effect that any bounded Lipschitz function in 
${\mathbb{R}}^{n-1}$ belongs to the H\"older space ${\mathcal{C}}^\theta({\mathbb{R}}^{n-1})$ 
for each exponent $\theta\in(0,1)$. In concert with the identification of the duals of Hardy spaces
(cf., e.g., \cite{GCRF85}) this shows that the pairings in \eqref{exist:u-123} are meaningful.

To prove \eqref{exist:Nu-Mf-Hp}, fix some $q\in(1,\infty)$ and assume first that the scalar components of $f$ 
are $(p,q)$-atoms. Hence, we need to consider 
\begin{equation}\label{eq:Fb}
u(x',t):=(P^L_t\ast a)(x'),\qquad\forall\,(x',t)\in{\mathbb{R}}^n_{+}, 
\end{equation}
where $a:\mathbb{R}^{n-1}\rightarrow\mathbb{C}^M$ is a $(p,q)$-atom (cf. \eqref{defi-atom-CM}).
Then, on account of \eqref{exist:Nu-Mf}, H\"older's inequality, the $L^{q}$-boundedness of the 
Hardy-Littlewood maximal operator, and the normalization of the atom we may write 
\begin{align}\label{eq:NBV1uj}
\int_{\sqrt{n}Q}\big({\mathcal{N}}_\kappa u\big)^p\,d{\mathcal{L}}^{n-1}
&\leq C\int_{\sqrt{n}Q}\big({\mathcal{M}}a\big)^p\,d{\mathcal{L}}^{n-1}
\nonumber\\[4pt]
&\leq C{\mathcal{L}}^{n-1}(Q)^{1-p/q}\Big(\int_{\sqrt{n}Q}\big({\mathcal{M}}a\big)^{q}\,d{\mathcal{L}}^{n-1}\Big)^{p/q}
\nonumber\\[4pt]
&\leq C{\mathcal{L}}^{n-1}(Q)^{1-p/q}\Big(\int_{\mathbb{R}^{n-1}}\big({\mathcal{M}}a\big)^q\,d{\mathcal{L}}^{n-1}\Big)^{p/q}
\nonumber\\[4pt]
&\leq C{\mathcal{L}}^{n-1}(Q)^{1-p/q}\|a\|_{[L^q(\mathbb{R}^{n-1})]^M}^p\leq C,
\end{align}
for some constant $C\in(0,\infty)$ depending only on $n,L,\kappa,p,q$.
To proceed, fix an arbitrary point $x'\in{\mathbb{R}}^{n-1}\setminus\sqrt{n}Q$.
If $\ell(Q)$ and $x'_Q$ are, respectively, the side-length and center of the cube $Q$, 
this choice entails 
\begin{equation}\label{eq:rEEb}
|z'-x_Q'|\leq\max\{\kappa,2\}\big(t+|z'-\xi'|\big),
\quad\forall\,(z',t)\in\Gamma_\kappa(x'),\,\,\forall\,\xi'\in Q.
\end{equation}
Indeed, if $(z',t)\in\Gamma_\kappa(x')$ and $\xi'\in Q$ then, first, 
$|z'-x'_Q|\leq |z'-\xi'|+|\xi'-x'_Q|$ and, second, 
$|\xi'-x'_Q|\leq\frac{\sqrt{n}}{2}\ell(Q)\leq\frac{1}{2}|x'-x'_Q|\leq\frac{1}{2}(|x'-z'|+|z'-x'_Q|)
\leq\frac{1}{2}(\kappa t+|z'-x'_Q|)$, from which \eqref{eq:rEEb} follows. 
Next, using \eqref{eq:Gvav7g5}, the vanishing moment condition for the atom, the Mean Value Theorem 
together with \eqref{eq:Kest} and \eqref{eq:rEEb}, H\"older's inequality and, finally, the 
support and normalization of the atom, for each $(z',t)\in\Gamma_\kappa(x')$ we may estimate 
\begin{align}\label{rrff4rf}
|(P^L_t\ast a)(z')| & =\Big|\int_{{\mathbb{R}}^{n-1}}\big[K^L(z'-y',t)-K^L(z'-x'_Q,t)\big]a(y')\,dy'\Big|
\nonumber\\[4pt]
&\leq\int_{Q}\big|K^L(z'-y',t)-K^L(z'-x'_Q,t)\big||a(y')|\,dy'
\nonumber\\[4pt]
&\leq C\frac{\ell(Q)}{\big(t+|z'-x'_Q|\big)^n}\int_{Q}|a(y')|\,dy'
\nonumber\\[4pt]
&\leq C\frac{\ell(Q)}{\big(t+|z'-x'_Q|\big)^n}\,
{\mathcal{L}}^{n-1}(Q)^{1-1/q}\|a\|_{[L^q(\mathbb{R}^{n-1})]^M}
\nonumber\\[4pt]
&\leq\frac{C{\mathcal{L}}^{n-1}(Q)^{1-1/p}\ell(Q)}{\big(t+|z'-x'_Q|\big)^n}.
\end{align}
In turn, \eqref{rrff4rf} implies that for each $x'\in{\mathbb{R}}^{n-1}\setminus\sqrt{n}Q$ we have
\begin{align}\label{rrff4rf.2}
\big({\mathcal{N}}_\kappa u\big)(x') &=\sup_{(z',t)\in\Gamma_\kappa(x')}|(P^L_t\ast a)(z')|
\\[6pt]
&\leq\sup_{(z',t)\in\Gamma_\kappa(x')}\frac{C{\mathcal{L}}^{n-1}(Q)^{1-1/p}\ell(Q)}{\big(t+|z'-x'_Q|\big)^n}
=\frac{C{\mathcal{L}}^{n-1}(Q)^{1-1/p}\ell(Q)}{|x'-x'_Q|^n},
\nonumber
\end{align}
hence
\begin{equation}\label{eq:NBVGaa}
\int_{{\mathbb{R}}^{n-1}\setminus\sqrt{n}Q}\big({\mathcal{N}}_\kappa u\big)^p\,d{\mathcal{L}}^{n-1}
\leq C\int_{{\mathbb{R}}^{n-1}\setminus\sqrt{n}Q}\frac{{\mathcal{L}}^{n-1}(Q)^{p-1}\ell(Q)^p}{|x'-x'_Q|^{np}}\,dx'=C,
\end{equation}
for some constant $C\in(0,\infty)$ depending only on $n,L,p,q$. From \eqref{eq:NBV1uj} and 
\eqref{eq:NBVGaa} we deduce that whenever $u$ is as in \eqref{eq:Fb} then, for some constant $C\in(0,\infty)$ 
independent of the atom, 
\begin{equation}\label{eq:NBhf}
\int_{{\mathbb{R}}^{n-1}}\big({\mathcal{N}}_\kappa u\big)^p\,d{\mathcal{L}}^{n-1}\leq C.
\end{equation}

Next, consider the general case when the function $u$ is defined as in \eqref{exist:u-123} for some 
arbitrary $f\in\big[H^p({\mathbb{R}}^{n-1})\big]^M$. Writing $f$ as in \eqref{exist:u-123.Wer.3} then 
permits us to express (in view of the specific manner in which the duality pairing in \eqref{exist:u-123} 
manifests itself), for each fixed $(x',t)\in{\mathbb{R}}^n_{+}$, 
\begin{align}\label{exist:u-123.aDS}
u(x',t)=(P^L_t\ast f)(x')=\sum_{j=1}^\infty\lambda_j(P^L_t\ast a_j)(x')=\sum_{j=1}^\infty\lambda_ju_j(x',t),
\end{align}
where $u_j(x',t):=(P^L_t\ast a_j)(x')$ for each $j\in{\mathbb{N}}$. Consequently, based on the 
sublinearity of the nontangential maximal operator, the fact that $p<1$, the estimate established 
in \eqref{eq:NBhf} (presently used with $a:=a_j$), and the quasi-optimality of the atomic decomposition 
for $f$, we may write 
\begin{align}\label{eq:NBhf.2iii}
\int_{{\mathbb{R}}^{n-1}}\big({\mathcal{N}}_\kappa u\big)^p\,d{\mathcal{L}}^{n-1}
&\leq\int_{{\mathbb{R}}^{n-1}}\Big(\sum_{j=1}^\infty{\mathcal{N}}_\kappa(\lambda_ju_j)\Big)^p\,d{\mathcal{L}}^{n-1}
\nonumber\\[6pt]
&\leq\int_{{\mathbb{R}}^{n-1}}\sum_{j=1}^\infty\big({\mathcal{N}}_\kappa(\lambda_ju_j)\big)^p\,d{\mathcal{L}}^{n-1}
\nonumber\\[6pt]
&=\sum_{j=1}^\infty|\lambda_j|^p\int_{{\mathbb{R}}^{n-1}}\big({\mathcal{N}}_\kappa u_j\big)^p\,d{\mathcal{L}}^{n-1}
\nonumber\\[6pt]
&\leq C\sum_{j=1}^\infty|\lambda_j|^p\leq C\|f\|^p_{[H^p({\mathbb{R}}^{n-1})]^M}.
\end{align}
This proves \eqref{exist:Nu-Mf-Hp}.

Finally, the claims in the first line of \eqref{exist:u2-Hp} are seen by differentiating inside the
duality bracket, while the existence of the nontangential boundary trace in the second line of \eqref{exist:u2-Hp} 
is a consequence of the corresponding result in \eqref{exist:u2}, the estimate in \eqref{exist:Nu-Mf-Hp}, the density 
of $H^p({\mathbb{R}}^{n-1})\cap L^2({\mathbb{R}}^{n-1})$ in $H^p({\mathbb{R}}^{n-1})$, and a well-known 
abstract principle in harmonic analysis (see, e.g., \cite[Theorem~2.2, p.\,27]{Duoan}, 
\cite[Theorem~3.12, p.\,60]{SW} for results of similar flavor).
\end{proof}

Let $L$ be an $M\times M$ system with constant complex coefficients as in \eqref{L-def}-\eqref{L-ell.X} and 
fix an integrability exponent $p\in(1,\infty)$. From items {\it (b)}-{\it (c)} in Theorem~\ref{thm:Poisson.II} 
we then see that the family $T=\{T(t)\}_{t\geq 0}$ where $T(0):=I$, the identity operator on 
$\big[L^p({\mathbb{R}}^{n-1})\big]^M$ and, for each $t>0$,
\begin{equation}\label{eq:Taghb8}
\begin{array}{c}
T(t):\big[L^p({\mathbb{R}}^{n-1})\big]^M\longrightarrow\big[L^p({\mathbb{R}}^{n-1})\big]^M,
\\[6pt]
\big(T(t)f\big)(x'):=(P^L_t\ast f)(x')\,\text{ for all }\,
f\in\big[L^p({\mathbb{R}}^{n-1})\big]^M,\,\,x'\in{\mathbb{R}}^{n-1},
\end{array}
\end{equation}
is a $C_0$-semigroup on $\big[L^p({\mathbb{R}}^{n-1})\big]^M$, which satisfies
\begin{equation}\label{eq:Taghb8.77}
\sup_{t\geq 0}\big\|T(t)\big\|_{[L^p({\mathbb{R}}^{n-1})]^M\to[L^p({\mathbb{R}}^{n-1})]^M}<\infty.
\end{equation}

We now proceed to present several Fatou-type theorems and Poisson integral representation 
formulas for null-solutions of homogeneous constant complex coefficient elliptic second-order 
systems defined in ${\mathbb{R}}^n_{+}$ and subject to a variety of size conditions. 

\begin{theorem}\label{thm:FP.111}
Let $L$ be an $M\times M$ system with constant complex coefficients as in \eqref{L-def}-\eqref{L-ell.X}, 
and fix some aperture parameter $\kappa>0$. Suppose $u\in\big[\mathcal{C}^\infty(\mathbb{R}^n_{+})\big]^M$ 
satisfies $Lu=0$ in $\mathbb{R}_{+}^n$, as well as
\begin{equation}\label{UGav-5hH9i}
\int_{{\mathbb{R}}^{n-1}}\big({\mathcal{N}}^{(\varepsilon)}_\kappa u\big)(x')\frac{dx'}{1+|x'|^{n-1}}<\infty
\text{ for each fixed }\,\,\varepsilon>0,
\end{equation}
and also assume that there exists $\varepsilon_0>0$ such that the following finiteness integral condition holds:
\begin{equation}\label{u-integ-TR}
\int_{\mathbb{R}^{n-1}}\frac{\sup_{0<t<\varepsilon_0}|u(x',t)|}{1+|x'|^n}\,dx'<\infty.
\end{equation}

Then
\begin{equation}\label{Tafva.2222}
\left\{
\begin{array}{l}
\big(u\big|^{{}^{\kappa-{\rm n.t.}}}_{\partial{\mathbb{R}}^n_{+}}\big)(x')
\,\,\text{ exists at ${\mathcal{L}}^{n-1}$-a.e. point }\,\,x'\in{\mathbb{R}}^{n-1},
\\[10pt]
\displaystyle
u\big|^{{}^{\kappa-{\rm n.t.}}}_{\partial{\mathbb{R}}^n_{+}}\,\,\text{ belongs to the space }\,\,
\Big[L^1\Big({\mathbb{R}}^{n-1}\,,\,\frac{dx'}{1+|x'|^{n}}\Big)\Big]^M,
\\[12pt]
u(x',t)=\Big(P^L_t\ast\big(u\big|^{{}^{\kappa-{\rm n.t.}}}_{\partial{\mathbb{R}}^n_{+}}\big)\Big)(x')
\,\,\text{ for each }\,\,(x',t)\in{\mathbb{R}}^n_{+},
\end{array}
\right.
\end{equation}
where $P^L$ is the Agmon-Douglis-Nirenberg Poisson kernel in ${\mathbb{R}}^n_{+}$ associated with the system $L$  
as in Theorem~\ref{thm:Poisson}. In particular, from \eqref{nkc-EE-4}, \eqref{Tafva.2222}, and \eqref{exist:Nu-Mf} 
it follows that there exists a constant $C=C(L,\kappa)\in(0,\infty)$ with the property that
\begin{equation}\label{Tafva.2222.iii.1233}
\big|u\big|^{{}^{\kappa-{\rm n.t.}}}_{\partial{\mathbb{R}}^n_{+}}\big|
\leq{\mathcal{N}}_\kappa u\leq C{\mathcal{M}}\big(u\big|^{{}^{\kappa-{\rm n.t.}}}_{\partial{\mathbb{R}}^n_{+}}\big)
\,\,\text{ in }\,\,{\mathbb{R}}^{n-1}.
\end{equation}
\end{theorem}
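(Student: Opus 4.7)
The plan is to approximate from the interior: freeze a small $\delta>0$, use the fact that the shifted function $u(\cdot,\cdot+\delta)$ has the smooth boundary trace $g_\delta:=u(\cdot,\delta)$, represent this shifted function via the Poisson kernel using Theorem~\ref{thm:Poisson.II}(c), and then pass to the limit $\delta\to 0^+$. Hypothesis \eqref{UGav-5hH9i} applied with any $\varepsilon\in(0,\delta)$ places $g_\delta$ in $[L^1(\mathbb{R}^{n-1},dx'/(1+|x'|^{n-1}))]^M$, which embeds into $[L^1(\mathbb{R}^{n-1},dx'/(1+|x'|^n))]^M$. Consequently, the Poisson integral $U_\delta(x',t):=(P^L_t\ast g_\delta)(x')$ is well defined by Theorem~\ref{thm:Poisson.II}(c), belongs to $[\mathcal{C}^\infty(\mathbb{R}^n_+)]^M$, satisfies $LU_\delta=0$, and, by \eqref{exist:Nu-Mf-LIM.222}, extends continuously to $\overline{\mathbb{R}^n_+}$ with boundary trace $g_\delta$.

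Next I would establish the \emph{level-$\delta$} Poisson identity
\begin{equation*}
u(x',t+\delta)=(P^L_t\ast g_\delta)(x'),\qquad (x',t)\in\mathbb{R}^n_+.
\end{equation*}
The difference $w_\delta(x',t):=u(x',t+\delta)-U_\delta(x',t)$ is a null-solution of $L$ in $\mathbb{R}^n_+$ which extends continuously to $\overline{\mathbb{R}^n_+}$ with vanishing boundary trace; moreover, the nontangential maximal function of $w_\delta$ is controlled in a weighted $L^1$ sense by \eqref{UGav-5hH9i} (through the shifted $u$) and by \eqref{exist:Nu-Mf} together with the weak-type maximal inequality (through $U_\delta$). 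A Liouville-type uniqueness theorem from the program \cite{K-MMMM}--\cite{B-MMMM} --- asserting that any continuous null-solution of $L$ on $\overline{\mathbb{R}^n_+}$ with zero boundary trace and sufficiently controlled growth must vanish identically --- then forces $w_\delta\equiv 0$, yielding the displayed identity. \emph{This uniqueness step is the main obstacle I expect}: one must match the decay information produced by \eqref{UGav-5hH9i} with the admissible class for some available uniqueness statement.

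With the level-$\delta$ identity in hand, I would let $\delta\to 0^+$. Hypothesis \eqref{u-integ-TR} produces the dominant function $F(x'):=\sup_{0<\delta<\varepsilon_0}|u(x',\delta)|\in L^1(\mathbb{R}^{n-1},d\mu)$, where $d\mu:=dx'/(1+|x'|^n)$, so $\{g_\delta\}_{0<\delta<\varepsilon_0}$ is an equi-integrable family in $[L^1(\mathbb{R}^{n-1},d\mu)]^M$. The Dunford--Pettis theorem then delivers a sequence $\delta_k\to 0^+$ and $f\in[L^1(\mathbb{R}^{n-1},d\mu)]^M$ with $g_{\delta_k}\to f$ weakly in $[L^1(\mathbb{R}^{n-1},d\mu)]^M$. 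For each fixed $(x',t)\in\mathbb{R}^n_+$ the function $y'\mapsto K^L(x'-y',t)(1+|y'|^n)$ is bounded and continuous thanks to \eqref{Uddcv}, so it belongs to $[L^\infty(\mathbb{R}^{n-1},d\mu)]^{M\times M}$, the dual of $[L^1(\mathbb{R}^{n-1},d\mu)]^M$; testing the weak convergence against it permits passing to the limit in the level-$\delta$ identity and yields $u(x',t)=(P^L_t\ast f)(x')$ for every $(x',t)\in\mathbb{R}^n_+$. Applying Theorem~\ref{thm:Poisson.II}(c) to the right-hand side identifies the $\kappa$-nontangential trace of $u$ with $f$ at every Lebesgue point of $f$, hence $\mathcal{L}^{n-1}$-a.e. in $\mathbb{R}^{n-1}$; this establishes all three assertions of \eqref{Tafva.2222}, and the pointwise bound \eqref{Tafva.2222.iii.1233} then follows at once from \eqref{nkc-EE-4} and \eqref{exist:Nu-Mf}.
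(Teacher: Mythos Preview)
Your overall strategy---shift by $\delta$, obtain a Poisson representation at each level, then pass to the limit using weak compactness driven by \eqref{u-integ-TR}---matches the paper's proof exactly. The weak-limit step is essentially identical: the paper invokes a tailored weak-$\ast$ lemma (Lemma~\ref{WLLL}) that plays the same role as your Dunford--Pettis argument, and tests against the same bounded continuous function $y'\mapsto(1+|y'|^n)P^L_t(x'-y')$.

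The gap you correctly flag is in the level-$\delta$ identity. Your plan introduces $U_\delta:=P^L_t\ast g_\delta$ and then tries to kill $w_\delta:=u(\cdot,\cdot+\delta)-U_\delta$ via uniqueness. The trouble is that the natural uniqueness statement available here (Theorem~\ref{thm:FP}) requires $\mathcal{N}_\kappa w_\delta\in L^1\big(\mathbb{R}^{n-1},\tfrac{dx'}{1+|x'|^{n-1}}\big)$. You control the shifted-$u$ piece via $\mathcal{N}_\kappa\big(u(\cdot,\cdot+\delta)\big)\leq\mathcal{N}^{(\delta)}_\kappa u$ and \eqref{UGav-5hH9i}, but for $U_\delta$ you only have $\mathcal{N}_\kappa U_\delta\leq C\,\mathcal{M}g_\delta$ from \eqref{exist:Nu-Mf}, and $\mathcal{M}g_\delta$ need not lie in that weighted $L^1$ space given only that $g_\delta$ does; the weak-type bound does not rescue this.

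The paper sidesteps the issue entirely: rather than build $U_\delta$ and compare, it applies Theorem~\ref{thm:FP} \emph{directly to the shifted function} $u_\delta(x',t):=u(x',t+\delta)$. Since $\mathcal{N}_\kappa u_\delta\leq\mathcal{N}^{(\delta)}_\kappa u$, hypothesis \eqref{UGav-5hH9i} places $u_\delta$ squarely in the scope of Theorem~\ref{thm:FP}, whose conclusion is precisely the Poisson representation $u_\delta=P^L_t\ast f_\delta$ with $f_\delta=u_\delta\big|^{{}^{\kappa-{\rm n.t.}}}_{\partial\mathbb{R}^n_+}=g_\delta$ (the last equality because $u_\delta$ is smooth up to the boundary). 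No separate uniqueness argument for $w_\delta$ is needed, and the maximal function of the Poisson extension never enters.
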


It is natural to think of \eqref{Tafva.2222.iii.1233}, which implies that for almost 
every point $x'\in{\mathbb{R}}^{n-1}$ the supremum of the function $u$ in the cone 
$\Gamma_{\kappa}(x')$ lies in between the (absolute value of the) boundary trace 
$u\big|^{{}^{\kappa-{\rm n.t.}}}_{\partial{\mathbb{R}}^n_{+}}$ evaluated at $x'$
and a fixed multiple of the Hardy-Littlewood operator acting 
on the boundary trace $u\big|^{{}^{\kappa-{\rm n.t.}}}_{\partial{\mathbb{R}}^n_{+}}$ at $x'$, 
as some type of ``Pointwise Maximum Principle.'' 

Theorem~\ref{thm:FP.111} is optimal from multiple perspectives. First, observe from 
item $(a)$ of Theorem~\ref{thm:Poisson.II} and item $(c)$ of Theorem~\ref{thm:Poisson}
that for each $a\in{\mathbb{C}}^M\setminus\{0\}$ the function 
\begin{equation}\label{FCT-TR}
u_a(x',t):=t^{1-n}P^L(x'/t)a=K^L(x',t)a,\,\,\text{ for each }\,\,(x',t)\in\mathbb{R}^n_{+},
\end{equation}
satisfies $u_a\in\big[\mathcal{C}^{\infty}(\overline{\mathbb{R}^n_{+}}\setminus\{0\})\big]^M$, 
$Lu_a=0$ in $\mathbb{R}_{+}^n$, and $\Big(u_a\big|^{{}^{\kappa-{\rm n.t.}}}_{\partial{\mathbb{R}}^{n}_{+}}\Big)(x')=0$ 
for every aperture parameter $\kappa>0$ and every point $x'\in{\mathbb{R}}^{n-1}\setminus\{0'\}$. 
Moreover, $u_a$ is not identically zero since $\int_{\mathbb{R}^{n-1}}u_a(x',t)\,dx'=a$ for each 
$t>0$ by \eqref{eq:IG6gy.2}. As such, the Poisson integral representation formula in the 
last line of \eqref{Tafva.2222} fails for each $a\in{\mathbb{C}}^M\setminus\{0\}$. 
Let us also observe that having 
\begin{equation}\label{ijfdghba}
|u_a(y',t)|\leq C|a|t(t^2+|y'|^2)^{-n/2}\,\,\text{ for each }\,\,
(y',t)\in{\mathbb{R}}^n_{+}
\end{equation}
entails that for each $\varepsilon>0$ there exists $C_{a,\varepsilon}\in(0,\infty)$ such that 
\begin{equation}\label{UGav-5hH9i-agg}
\big({\mathcal{N}}^{(\varepsilon)}_\kappa u_a\big)(x')\leq\frac{C_{a,\varepsilon}}{1+|x'|^{n-1}}
\,\,\text{ for each }\,\,x'\in{\mathbb{R}}^{n-1}.
\end{equation}
Hence, condition \eqref{UGav-5hH9i} is presently satisfied by each $u_a$. 
In light of Theorem~\ref{thm:FP.111} the finiteness integral condition stipulated 
in \eqref{u-integ-TR} then necessarily should fail for each $u_a$. To check directly that 
this is the case, recall from \eqref{eq:IG6gy.2-Lambda} that for each 
$a\in{\mathbb{C}}^M\setminus\{0\}$ there exists some $\lambda>0$ such that 
$\int_{S^{n-2}}\big|P^L(\lambda\omega)a\big|\,d\omega\in(0,\infty)$. In turn, 
this permits us to estimate
\begin{align}\label{y65trta}
\int_{\mathbb{R}^{n-1}}&\frac{\sup_{0<t<\varepsilon_0}|u_a(x',t)|}{1+|x'|^n}\,dx'
\geq\int_{B_{n-1}(0',\lambda\varepsilon_0)}\frac{|u_a(x',|x'|/\lambda)|}{1+|x'|^n}\,dx'
\nonumber\\[6pt]
&=\int_{B_{n-1}(0',\lambda\varepsilon_0)}\frac{(|x'|/\lambda)^{1-n}\big|P^L(\lambda x'/|x'|)a\big|}{1+|x'|^n}\,dx'
\\[6pt]
&=\Big(\int_{S^{n-2}}\big|P^L(\lambda\omega)a\big|\,d\omega\Big)
\Big(\int_0^{\lambda\varepsilon_0}\frac{\lambda^{n-1}}{\rho(1+\rho^n)}\,d\rho\Big)=\infty,
\nonumber
\end{align}
using \eqref{FCT-TR} and passing to polar coordinates. Thus, \eqref{u-integ-TR} fails for each $u_a$. 

Second, the absolute integrability condition \eqref{u-integ-TR} may not be in general replaced by membership to the 
corresponding weak Lebesgue space. For example, in the case $L:=\Delta$, the Laplacian in ${\mathbb{R}}^n$, 
if \eqref{u-integ-TR} is weakened to the demand that 
\begin{equation}\label{u-integ-TR.adfg.jk}
\begin{array}{c}
\mathbb{R}^{n-1}\ni x'\mapsto\frac{\sup_{0<t<\varepsilon_0}|u(x',t)|}{1+|x'|^n}\in[0,\infty]
\\[6pt]
\text{is a function belonging to }\,\,L^{1,\infty}({\mathbb{R}}^{n-1})
\end{array}
\end{equation}
then Theorem~\ref{thm:FP.111} may fail. Indeed, this may be seen by considering the nonzero harmonic function 
$u(x',t)=t(t^2+|x'|^2)^{-n/2}$ for each $(x',t)\in{\mathbb{R}}^n_{+}$, which satisfies \eqref{UGav-5hH9i} and 
\eqref{u-integ-TR.adfg.jk}. However, the Poisson integral representation formula in the last line of 
\eqref{Tafva.2222} fails since $\Big(u\big|^{{}^{\kappa-{\rm n.t.}}}_{\partial{\mathbb{R}}^{n}_{+}}\Big)(x')=0$ 
for every $x'\in{\mathbb{R}}^{n-1}\setminus\{0'\}$.

Third, one cannot relax the formulation of the finiteness integral condition
\eqref{u-integ-TR} by placing the supremum outside the integral sign. To see this, 
fix an arbitrary $a\in{\mathbb{C}}^M\setminus\{0\}$ and take $u_a$ as in \eqref{FCT-TR}. 
Then, thanks to \eqref{eq:IG6gy}, we have 
\begin{align}\label{hgggf}
\sup_{0<t<\varepsilon_0}\int_{\mathbb{R}^{n-1}}\frac{|u_a(x',t)|}{1+|x'|^n}\,dx'
&\leq\sup_{0<t<\varepsilon_0}\int_{\mathbb{R}^{n-1}}|u_a(x',t)|\,dx'
\nonumber\\[6pt]
&=\sup_{0<t<\varepsilon_0}\int_{\mathbb{R}^{n-1}}\big|P_t^{L}(x')a\big|\,dx'
\nonumber\\[6pt]
&\leq|a|\int_{\mathbb{R}^{n-1}}\big|P^{L}(x')\big|\,dx'<\infty.
\end{align}
Yet, again, the Poisson representation formula in the last line of \eqref{Tafva.2222} fails.

\vskip 0.06in
One notable consequence of Theorem~\ref{thm:FP.111} is the Fatou-type theorem and 
its associated Poisson integral formula presented below.

\begin{theorem}\label{thm:FP}
Let $L$ be an $M\times M$ system with constant complex coefficients as in 
\eqref{L-def}-\eqref{L-ell.X}, and fix some aperture parameter $\kappa>0$. Then having
\begin{equation}\label{jk-lm-jhR-LLL-HM-RN.w}
\left\{
\begin{array}{l}
u\in\big[{\mathcal{C}}^{\infty}({\mathbb{R}}^n_{+})\big]^M,\quad Lu=0\,\,\text{ in }\,\,{\mathbb{R}}^n_{+},
\\[8pt]
\displaystyle
\int_{\mathbb{R}^{n-1}}\big({\mathcal{N}}_{\kappa}u\big)(x')\,\frac{dx'}{1+|x'|^{n-1}}<\infty,
\end{array}
\right.
\end{equation}
implies that 
\begin{equation}\label{Tafva.2222.iii}
\left\{
\begin{array}{l}
\big(u\big|^{{}^{\kappa-{\rm n.t.}}}_{\partial{\mathbb{R}}^n_{+}}\big)(x')
\,\,\text{ exists at ${\mathcal{L}}^{n-1}$-a.e. point }\,\,x'\in{\mathbb{R}}^{n-1},
\\[10pt]
\displaystyle
u\big|^{{}^{\kappa-{\rm n.t.}}}_{\partial{\mathbb{R}}^n_{+}}\,\,\text{ belongs to the space }\,\,
\Big[L^1\Big({\mathbb{R}}^{n-1}\,,\,\frac{dx'}{1+|x'|^{n-1}}\Big)\Big]^M,
\\[12pt]
u(x',t)=\Big(P^L_t\ast\big(u\big|^{{}^{\kappa-{\rm n.t.}}}_{\partial{\mathbb{R}}^n_{+}}\big)\Big)(x')
\,\,\text{ for each }\,\,(x',t)\in{\mathbb{R}}^n_{+},
\end{array}
\right.
\end{equation}
where $P^L$ is the Agmon-Douglis-Nirenberg Poisson kernel in ${\mathbb{R}}^n_{+}$ associated with the system $L$  
as in Theorem~\ref{thm:Poisson}. In particular, there exists a constant $C=C(L,\kappa)\in(0,\infty)$ 
such that the following Pointwise Maximum Principle holds:
\begin{equation}\label{Taf-UHN}
\big|u\big|^{{}^{\kappa-{\rm n.t.}}}_{\partial{\mathbb{R}}^n_{+}}\big|
\leq{\mathcal{N}}_\kappa u\leq C{\mathcal{M}}\big(u\big|^{{}^{\kappa-{\rm n.t.}}}_{\partial{\mathbb{R}}^n_{+}}\big)
\,\,\text{ in }\,\,{\mathbb{R}}^{n-1}.
\end{equation}
\end{theorem}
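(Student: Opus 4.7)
The plan is to derive Theorem~\ref{thm:FP} as an immediate corollary of the more general Theorem~\ref{thm:FP.111}, by showing that the single integrability hypothesis in \eqref{jk-lm-jhR-LLL-HM-RN.w} automatically implies both finiteness conditions \eqref{UGav-5hH9i} and \eqref{u-integ-TR} required there. Hypothesis \eqref{UGav-5hH9i} is essentially free: from the second line of \eqref{NT-Fct-EP.anB} we have the pointwise bound ${\mathcal{N}}^{(\varepsilon)}_\kappa u\leq{\mathcal{N}}_\kappa u$ on $\mathbb{R}^{n-1}$ for each $\varepsilon>0$, so \eqref{UGav-5hH9i} follows directly from the finiteness of $\int_{\mathbb{R}^{n-1}}({\mathcal{N}}_\kappa u)(x')\frac{dx'}{1+|x'|^{n-1}}$.

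The key observation for \eqref{u-integ-TR} is that for every $x'\in\mathbb{R}^{n-1}$ the entire vertical ray above $x'$ is contained in $\Gamma_\kappa(x')$, since $|x'-x'|=0<\kappa t$ for every $t>0$. Therefore $|u(x',t)|\leq({\mathcal{N}}_\kappa u)(x')$ for all $t>0$, and in particular $\sup_{0<t<\varepsilon_0}|u(x',t)|\leq({\mathcal{N}}_\kappa u)(x')$ for any choice of $\varepsilon_0>0$. Since $\frac{1}{1+|x'|^n}\leq\frac{1}{1+|x'|^{n-1}}$, we obtain
\begin{equation*}
\int_{\mathbb{R}^{n-1}}\frac{\sup_{0<t<\varepsilon_0}|u(x',t)|}{1+|x'|^n}\,dx'
\leq\int_{\mathbb{R}^{n-1}}\frac{({\mathcal{N}}_\kappa u)(x')}{1+|x'|^{n-1}}\,dx'<\infty,
\end{equation*}
so \eqref{u-integ-TR} is verified for any $\varepsilon_0>0$.

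With both hypotheses in hand, Theorem~\ref{thm:FP.111} delivers the existence of the nontangential boundary trace ${\mathcal{L}}^{n-1}$-a.e. in $\mathbb{R}^{n-1}$ and the Poisson integral representation formula in the last line of \eqref{Tafva.2222.iii}. To sharpen the weighted integrability from $\frac{dx'}{1+|x'|^n}$ (as provided by Theorem~\ref{thm:FP.111}) to the stronger $\frac{dx'}{1+|x'|^{n-1}}$ claimed in the middle line of \eqref{Tafva.2222.iii}, simply combine the pointwise majorization $\big|u\big|_{\partial\mathbb{R}^n_{+}}^{\kappa-\mathrm{n.t.}}\big|\leq{\mathcal{N}}_\kappa u$ from \eqref{nkc-EE-4} with the hypothesis \eqref{jk-lm-jhR-LLL-HM-RN.w}. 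The Pointwise Maximum Principle \eqref{Taf-UHN} is then a direct transcription of \eqref{Tafva.2222.iii.1233}.

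There is no genuine obstacle in this argument; the only point that requires a moment of reflection is the trivial-but-crucial fact that the vertical segment above $x'$ sits inside every nontangential cone $\Gamma_\kappa(x')$, which is precisely what bridges the ``vertical supremum'' appearing in \eqref{u-integ-TR} to the ``conical supremum'' defining ${\mathcal{N}}_\kappa u$. All the real analytic content has already been invested in Theorem~\ref{thm:FP.111}.
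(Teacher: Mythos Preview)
Your argument is essentially identical to the paper's own derivation of Theorem~\ref{thm:FP} from Theorem~\ref{thm:FP.111}: verify \eqref{UGav-5hH9i} via \eqref{NT-Fct-EP.anB}, verify \eqref{u-integ-TR} by bounding the vertical supremum by ${\mathcal{N}}_\kappa u$, invoke Theorem~\ref{thm:FP.111}, and then upgrade the weight on the trace using \eqref{nkc-EE-4}. One small slip: the pointwise inequality $\frac{1}{1+|x'|^n}\leq\frac{1}{1+|x'|^{n-1}}$ is false for $|x'|<1$ (there $|x'|^n<|x'|^{n-1}$), so you need a dimensional constant, i.e., $\frac{1}{1+|x'|^n}\leq\frac{C_n}{1+|x'|^{n-1}}$ for all $x'\in\mathbb{R}^{n-1}$, exactly as the paper writes in \eqref{u-integ-TR.atr}; this is harmless and the rest of your argument is unaffected.
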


\vskip 0.08in
\begin{proof}[Proof of the fact that Theorem~\ref{thm:FP.111} implies Theorem~\ref{thm:FP}]
Given $u$ as in \eqref{jk-lm-jhR-LLL-HM-RN.w}, we have 
\begin{equation}\label{u-integ-TR.atr}
\int_{\mathbb{R}^{n-1}}\frac{\sup_{0<t<\varepsilon_0}|u(x',t)|}{1+|x'|^n}\,dx'
\leq C_n\int_{\mathbb{R}^{n-1}}\big({\mathcal{N}}_{\kappa}u\big)(x')\,\frac{dx'}{1+|x'|^{n-1}}<\infty.
\end{equation}
In view of this and \eqref{NT-Fct-EP.anB}, we conclude that the conditions stipulated in 
\eqref{UGav-5hH9i}-\eqref{u-integ-TR} are valid. As such, Theorem~\ref{thm:FP.111} guarantees 
that the properties listed in the first and third lines of \eqref{Tafva.2222.iii} hold. 
In addition, thanks to \eqref{nkc-EE-4} we now have the membership claimed in the middle 
line of \eqref{Tafva.2222.iii}. 
\end{proof}

A direct, self-contained proof of Theorem~\ref{thm:FP} (without having to rely on Theorem~\ref{thm:FP.111}) 
has been given in \cite{Madrid}. Here we shall indicate how Theorem~\ref{thm:FP} self-improves to 
Theorem~\ref{thm:FP.111}. In the process, we shall need the following weak-* convergence result from \cite{SCGC}. 

\begin{lemma}\label{WLLL}
Suppose $\{f_j\}_{j\in{\mathbb{N}}}\subseteq L^1\big({\mathbb{R}}^{n-1}\,,\,\frac{dx'}{1+|x'|^{n}}\big)$ 
is a sequence of functions satisfying 
\begin{equation}\label{523563fve-NNN}
\int_{\mathbb{R}^{n-1}}\frac{\sup_{j\in{\mathbb{N}}}|f_j(x')|}{1+|x'|^n}\,dx'<\infty.
\end{equation}

Then there exist $f\in L^1\big(\mathbb{R}^{n-1},\frac{dx'}{1+|x'|^n}\big)$ and 
a sub-sequence $\big\{f_{j_k}\big\}_{k\in{\mathbb{N}}}$ of $\{f_j\}_{j\in{\mathbb{N}}}$ with the property that
\begin{equation}\label{2135racvs-NNN}
\lim_{k\to\infty}\int_{\mathbb{R}^{n-1}}\phi(y')\,f_{j_k}(y')\,\frac{dy'}{1+|y'|^n}
=\int_{\mathbb{R}^{n-1}}\phi(y')\,f(y')\,\frac{dy'}{1+|y'|^n}
\end{equation}
for every function $\phi$ belonging to $\mathcal{C}^0_b(\mathbb{R}^{n-1})$, the space of ${\mathbb{C}}$-valued 
continuous and bounded functions in $\mathbb{R}^{n-1}$. 
\end{lemma}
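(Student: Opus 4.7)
The plan is to reformulate the lemma as a weak sequential compactness statement in $L^1(\mathbb{R}^{n-1})$ taken with respect to ordinary Lebesgue measure. For each $j\in\mathbb{N}$ set $g_j(x'):=f_j(x')/(1+|x'|^n)$ and $g(x'):=\sup_{j\in\mathbb{N}}|f_j(x')|/(1+|x'|^n)$. The hypothesis \eqref{523563fve-NNN} is precisely that $g\in L^1(\mathbb{R}^{n-1},dx')$, and by construction $|g_j(x')|\leq g(x')$ for every $j\in\mathbb{N}$ and a.e.\ $x'\in\mathbb{R}^{n-1}$. If one can produce an $h\in L^1(\mathbb{R}^{n-1},dx')$ and a subsequence $\{g_{j_k}\}_{k\in\mathbb{N}}$ converging to $h$ in the weak topology of $L^1(\mathbb{R}^{n-1},dx')$, then setting $f(x'):=(1+|x'|^n)h(x')$ produces an element of $L^1\bigl(\mathbb{R}^{n-1},dx'/(1+|x'|^n)\bigr)$, and testing weak convergence against an arbitrary $\phi\in\mathcal{C}^0_b(\mathbb{R}^{n-1})\subset L^\infty(\mathbb{R}^{n-1})=\bigl(L^1(\mathbb{R}^{n-1},dx')\bigr)^*$ immediately yields \eqref{2135racvs-NNN}.

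The core step is therefore to extract such a weakly convergent subsequence, which I would do via the Dunford--Pettis theorem. Since the family $\{g_j\}_{j\in\mathbb{N}}$ is dominated in absolute value by the single integrable function $g$, it is automatically bounded in $L^1$-norm. It also inherits from $g$ both equi-absolute continuity (for every $\varepsilon>0$ there exists $\delta>0$ such that $\mathcal{L}^{n-1}(E)<\delta$ implies $\int_E g\,dx'<\varepsilon$, hence $\int_E|g_j|\,dx'<\varepsilon$ uniformly in $j$) and tightness (since $\int_{\{|x'|>R\}}g\,dx'\to 0$ as $R\to\infty$, again uniformly in $j$). These are precisely the hypotheses needed to invoke the Dunford--Pettis theorem on the $\sigma$-finite measure space $(\mathbb{R}^{n-1},dx')$, yielding relative weak sequential compactness of $\{g_j\}_{j\in\mathbb{N}}$ in $L^1(\mathbb{R}^{n-1},dx')$ and hence the desired subsequence together with its weak limit $h$.

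The main technical issue is ensuring that Dunford--Pettis is applicable despite the underlying measure space having infinite (though $\sigma$-finite) total measure; the decay factor $1/(1+|x'|^n)$ built into the weight precisely supplies the tightness that the non-finite setting demands, so this is not really an obstacle but rather the whole point of the hypothesis. A more hands-on alternative, avoiding any black-box functional-analytic reference, would be to regard $\mu_j:=g_j\,dx'$ as a sequence of finite complex Radon measures on $\mathbb{R}^{n-1}$ with uniformly bounded total variation, apply Banach--Alaoglu in the dual of $\mathcal{C}_0(\mathbb{R}^{n-1})$ to extract a weak-* convergent subsequence, use the tightness inherited from $g$ to promote testing from $\mathcal{C}_0(\mathbb{R}^{n-1})$ up to $\mathcal{C}^0_b(\mathbb{R}^{n-1})$, and finally identify the limit measure as absolutely continuous with respect to Lebesgue measure (with density $h$) via a Vitali-type argument based on the uniform domination $|g_j|\leq g\in L^1(\mathbb{R}^{n-1},dx')$.
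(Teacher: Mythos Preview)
The paper does not actually prove Lemma~\ref{WLLL}; it merely records the statement and attributes it to \cite{SCGC}. Consequently there is no in-paper argument to compare against, and I can only assess the soundness of your proposal on its own terms.

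Your approach via Dunford--Pettis is correct. The reduction $g_j:=f_j/(1+|x'|^n)$ places the problem in ordinary $L^1(\mathbb{R}^{n-1},dx')$, the domination $|g_j|\leq g\in L^1$ gives both equi-absolute continuity and tightness, and the version of Dunford--Pettis valid on $\sigma$-finite spaces (which requires precisely these two conditions for relative weak compactness) then applies. Eberlein--\v{S}mulian converts relative weak compactness into weak sequential compactness, yielding the subsequence and the limit $h$. Testing against $\phi\in\mathcal{C}^0_b(\mathbb{R}^{n-1})\subset L^\infty$ and unwinding the change of variables gives \eqref{2135racvs-NNN}. The alternative route you sketch through Banach--Alaoglu on finite Radon measures is also viable; the paper's phrasing ``weak-$\ast$ convergence result'' suggests the argument in \cite{SCGC} may be closer in spirit to that second approach, but without access to that reference this is speculation. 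Either way, your write-up is a valid proof.
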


We are ready to provide a proof of Theorem~\ref{thm:FP.111} which relies on Theorem~\ref{thm:FP}.

\vskip 0.08in
\begin{proof}[Proof of Theorem~\ref{thm:FP.111}]
For each $\varepsilon>0$ define $u_\varepsilon(x',t):=u(x',t+\varepsilon)$ for each $(x',t)\in\overline{{\mathbb{R}}^n_{+}}$.
Also, set $f_\varepsilon(x'):=u(x',\varepsilon)$ for each $x'\in{\mathbb{R}}^{n-1}$.  
Since we have ${\mathcal{N}}_{\kappa}u_\varepsilon\leq{\mathcal{N}}^{(\varepsilon)}_{\kappa}u$ 
on $\mathbb{R}^{n-1}$, we conclude that 
\begin{equation}\label{jk-lm-jhR-LLL-HM-RN.w.123}
\begin{array}{c}
u_\varepsilon\in\big[{\mathcal{C}}^{\infty}(\overline{{\mathbb{R}}^n_{+}})\big]^M,\quad 
u_\varepsilon\big|^{{}^{\kappa-{\rm n.t.}}}_{\partial{\mathbb{R}}^n_{+}}=f_\varepsilon\,\,\text{ on }\,\,\mathbb{R}^{n-1},
\quad Lu_\varepsilon=0\,\,\text{ in }\,\,{\mathbb{R}}^n_{+},
\\[8pt]
\displaystyle
\int_{\mathbb{R}^{n-1}}\big({\mathcal{N}}_{\kappa}u_\varepsilon\big)(x')\,\frac{dx'}{1+|x'|^{n-1}}
\leq\int_{\mathbb{R}^{n-1}}\big({\mathcal{N}}^{(\varepsilon)}_{\kappa}u\big)(x')\,\frac{dx'}{1+|x'|^{n-1}}<\infty.
\end{array}
\end{equation}
As such, Theorem~\ref{thm:FP} applies to each $u_\varepsilon$, and the Poisson integral representation formula in 
the last line of \eqref{Tafva.2222.iii} presently guarantees that for each $\varepsilon>0$ we have
\begin{align}\label{TaajYGa}
u(x',t+\varepsilon) &=u_\varepsilon(x',t)=\big(P^L_t\ast f_\varepsilon\big)(x')
\nonumber\\[6pt]
&=\int_{{\mathbb{R}}^{n-1}}P^L_t(x'-y')f_\varepsilon(y')\,dy'
\,\,\text{ for each }\,\,(x',t)\in{\mathbb{R}}^n_{+}.
\end{align}
On the other hand, property \eqref{u-integ-TR} entails  
\begin{equation}\label{eq:16t44.iii}
\int_{\mathbb{R}^{n-1}}\frac{\sup_{0<\varepsilon<\varepsilon_0}|f_\varepsilon(x')|}{1+|x'|^n}\,dx'
=\int_{\mathbb{R}^{n-1}}\frac{\sup_{0<\varepsilon<\varepsilon_0}|u(x',\varepsilon)|}{1+|x'|^n}\,dx'<\infty.
\end{equation}
Granted this finiteness property, the weak-$\ast$ convergence result recalled in Lemma~\ref{WLLL} 
may be used for the sequence $\big\{f_{\varepsilon_0/(2j)}\big\}_{j\in{\mathbb{N}}}\subset
\big[L^1\big({\mathbb{R}}^{n-1}\,,\,\tfrac{dx'}{1+|x'|^n}\big)\big]^M$ to conclude that there 
exist some function $f\in\big[L^1\big({\mathbb{R}}^{n-1}\,,\,\tfrac{dx'}{1+|x'|^n}\big)\big]^M$ 
and some sequence $\{\varepsilon_k\}_{k\in{\mathbb{N}}}\subset(0,\varepsilon_0)$ which converges 
to zero, such that 
\begin{equation}\label{eq:16t44.MAD}
\lim_{k\to\infty}\int_{{\mathbb{R}}^{n-1}}\phi(y')f_{\varepsilon_k}(y')\frac{dy'}{1+|y'|^{n}}
=\int_{{\mathbb{R}}^{n-1}}\phi(y')f(y')\frac{dy'}{1+|y'|^{n}}
\end{equation}
for every continuous bounded function $\phi:{\mathbb{R}}^{n-1}\to{\mathbb{C}}^{M\times M}$.
Estimate \eqref{eq:IG6gy} ensures for each fixed point $(x',t)\in{\mathbb{R}}^n_{+}$ the assignment 
\begin{equation}\label{Fvabbb-7tF.Tda}
\begin{array}{c}
\displaystyle
{\mathbb{R}}^{n-1}\ni y'\mapsto\phi(y'):=(1+|y'|^{n})P^L_t(x'-y')\in{\mathbb{C}}^{M\times M}
\\[6pt]
\text{is a bounded, continuous, ${\mathbb{C}}^{M\times M}$-valued function}.
\end{array}
\end{equation}
At this stage, from \eqref{TaajYGa} and \eqref{eq:16t44.MAD} 
used for the function $\phi$ defined in \eqref{Fvabbb-7tF.Tda} we obtain 
(bearing in mind that $u$ is continuous in ${\mathbb{R}}^n_{+}$) that 
\begin{align}\label{AA-lm-jLL-HM-RN.ppp.wsa.2}
u(x',t)=\int_{{\mathbb{R}}^{n-1}}P^L_t(x'-y')f(y')\,dy'\,\,\,\text{ for each }\,\,x=(x',t)\in{\mathbb{R}}^n_{+}.
\end{align}
With this in hand, and recalling that 
$f\in\Big[L^1\big({\mathbb{R}}^{n-1}\,,\,\tfrac{dx'}{1+|x'|^{n}}\big)\Big]^M$, 
we may invoke Theorem~\ref{thm:Poisson.II} (cf. \eqref{exist:u2}) to conclude that
\begin{equation}\label{exist:u2.b}
\text{$u\big|^{{}^{\kappa-{\rm n.t.}}}_{\partial\mathbb{R}^{n}_{+}}$ exists and equals $f$ 
at ${\mathcal{L}}^{n-1}$-a.e. point in $\mathbb{R}^{n-1}$}.
\end{equation}
With this in hand, all conclusions in \eqref{Tafva.2222} are implied by 
\eqref{AA-lm-jLL-HM-RN.ppp.wsa.2}-\eqref{exist:u2.b}. 
\end{proof}

Moving on, we consider two families of semi-norms on the class of continuous functions 
in ${\mathbb{R}}^n_{+}$, namely 
\begin{equation}\label{semi-norms}
\|u\|_{*,\rho}:=\rho^{-1}\cdot\sup_{B(0,\rho)\cap\mathbb{R}^n_{+}}|u|\in[0,+\infty]
\,\,\text{ for each }\,\,\rho\in(0,\infty), 
\end{equation}
and
\begin{equation}\label{semi-norms-eps}
\|u\|_{*,\varepsilon,\rho}:=\rho^{-1}\cdot\sup_{\substack{\varepsilon<t<\rho\\ |x'|<\rho}}|u(x',t)|
\,\,\,\text{ whenever }\,\,0<\varepsilon<\rho<\infty.
\end{equation}
Whenever $\liminf\limits_{\rho\to\infty}\|u\|_{*,\rho}=0$ we shall say that $u$ has subcritical growth. 
In this vein, it is worth observing that
\begin{equation}\label{subcritical-EQUI}
\parbox{9.50cm}{a continuous function $u:\mathbb{R}^n_{+}\to\mathbb{C}$ 
has $\lim\limits_{\rho\to\infty}\|u\|_{*,\rho}=0$
if and only if $u$ is bounded on any bounded subset of ${\mathbb{R}}^n_{+}$ and $u(x)=o(|x|)$ 
as $x\in{\mathbb{R}}^n_{+}$ satisfies $|x|\to\infty$.} 
\end{equation}

In relation to the family of seminorms introduced in \eqref{semi-norms}-\eqref{semi-norms-eps}, 
let us also observe that for each continuous function $u:{\mathbb{R}}^n_{+}\to{\mathbb{C}}$ we have
\begin{equation}\label{fvqafr}
\|u\|_{*,\rho}=\frac1{\log 2}\int_\rho^{2\rho}\|u\|_{*,\rho}\,\frac{dt}{t}
\leq\frac2{\log 2}\int_\rho^{2\rho}\|u\|_{*,t}\,\frac{dt}{t}
\end{equation}
for each $\rho\in(0,\infty)$, and that 
\begin{equation}\label{q34t3g3a}
\|u\|_{*,\varepsilon,\rho}\leq\sqrt{2}\|u\|_{*,\sqrt{2}\rho}
\leq\rho^{-1}\|u\|_{L^\infty(\mathbb{R}^n_{+})}
\,\,\,\text{ whenever }\,\,0<\varepsilon<\rho<\infty.
\end{equation}

Our next major theorem is a novel Fatou-type result (plus a naturally accompanying Poisson integral 
representation formula), recently established in \cite{SCGC}, of the sort discussed below. 

\begin{theorem}\label{thm:Fatou}
Let $L$ be an $M\times M$ homogeneous constant complex coefficient elliptic second-order system in ${\mathbb{R}}^n$. 
Let $u\in\big[\mathcal{C}^\infty(\mathbb{R}^n_{+})\big]^M$ be such that $Lu=0$ in $\mathbb{R}_{+}^n$.
In addition, assume  
\begin{equation}\label{subcritical:mild}
\liminf_{\rho\to\infty}\|u\|_{*,\varepsilon,\rho}=0\,\,\text{ for each fixed }\,\,\varepsilon>0,
\end{equation}
and suppose that there exists $\varepsilon_0>0$ such that the following finiteness integral condition holds:
\begin{equation}\label{u-integ}
\int_{\mathbb{R}^{n-1}}\frac{\sup_{0<t<\varepsilon_0}|u(x',t)|}{1+|x'|^n}\,dx'<\infty.
\end{equation}

Then, for each aperture parameter $\kappa>0$,
\begin{align}\label{Tafva.2222-ijk}
\begin{array}{l}
\big(u\big|^{{}^{\kappa-{\rm n.t.}}}_{\partial{\mathbb{R}}^n_{+}}\big)(x')
\,\,\text{ exists at ${\mathcal{L}}^{n-1}$-a.e. point }\,\,x'\in{\mathbb{R}}^{n-1},
\\[10pt]
\displaystyle
u\big|^{{}^{\kappa-{\rm n.t.}}}_{\partial{\mathbb{R}}^n_{+}}\,\,\text{ belongs to the space }\,\,
\Big[L^1\Big({\mathbb{R}}^{n-1}\,,\,\frac{dx'}{1+|x'|^{n}}\Big)\Big]^M,
\\[12pt]
u(x',t)=\Big(P^L_t\ast\big(u\big|^{{}^{\kappa-{\rm n.t.}}}_{\partial{\mathbb{R}}^n_{+}}\big)\Big)(x')
\,\,\text{ for each }\,\,(x',t)\in{\mathbb{R}}^n_{+}.
\end{array}
\end{align}
As a consequence, there exists a constant $C=C(L,\kappa)\in(0,\infty)$ with the property that 
the following Pointwise Maximum Principle holds:
\begin{equation}\label{Taf-UHN.ER.2}
\big|u\big|^{{}^{\kappa-{\rm n.t.}}}_{\partial{\mathbb{R}}^n_{+}}\big|
\leq{\mathcal{N}}_\kappa u\leq C{\mathcal{M}}\big(u\big|^{{}^{\kappa-{\rm n.t.}}}_{\partial{\mathbb{R}}^n_{+}}\big)
\,\,\text{ in }\,\,{\mathbb{R}}^{n-1}.
\end{equation}
\end{theorem}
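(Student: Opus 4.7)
\emph{Plan.} I would follow the same blueprint that derives Theorem~\ref{thm:FP.111} from Theorem~\ref{thm:FP}, but replace the integrability input at the fixed-$\varepsilon$ level by a Liouville-type uniqueness result driven by the subcritical growth hypothesis~\eqref{subcritical:mild}. For $\varepsilon\in(0,\varepsilon_0)$, set $u_\varepsilon(x',t):=u(x',t+\varepsilon)$ and $f_\varepsilon(x'):=u(x',\varepsilon)$. Then $u_\varepsilon\in\bigl[\mathcal{C}^\infty(\overline{\mathbb{R}^n_{+}})\bigr]^M$, $Lu_\varepsilon=0$ in $\mathbb{R}^n_{+}$, and $u_\varepsilon|_{\partial\mathbb{R}^n_{+}}=f_\varepsilon$ classically. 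The finiteness condition~\eqref{u-integ} forces $\{f_\varepsilon\}_{\varepsilon\in(0,\varepsilon_0)}$ to sit in $\bigl[L^1\bigl(\mathbb{R}^{n-1},dx'/(1+|x'|^n)\bigr)\bigr]^M$ with a common integrable envelope, so item~(c) of Theorem~\ref{thm:Poisson.II} permits us to form $v_\varepsilon(x',t):=(P^L_t\ast f_\varepsilon)(x')$, and~\eqref{exist:Nu-Mf-LIM.222} (using continuity of $f_\varepsilon$) gives $v_\varepsilon\in\bigl[\mathcal{C}^0(\overline{\mathbb{R}^n_{+}})\bigr]^M$ with pointwise boundary trace $f_\varepsilon$.

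\emph{Heart of the argument: $u_\varepsilon\equiv v_\varepsilon$.} The difference $w_\varepsilon:=u_\varepsilon-v_\varepsilon$ satisfies $Lw_\varepsilon=0$ in $\mathbb{R}^n_{+}$, is continuous up to $\partial\mathbb{R}^n_{+}$, and vanishes identically on that boundary. I would then verify that $w_\varepsilon$ has subcritical growth and invoke the following Liouville principle for $L$: \emph{a null-solution of $L$ in $\mathbb{R}^n_{+}$ that is continuous up to the flat boundary, vanishes there, and satisfies $\liminf_{\rho\to\infty}\|\,\cdot\,\|_{*,\rho}=0$ must be identically zero}. Subcritical growth for $u_\varepsilon$ is inherited from~\eqref{subcritical:mild} via the comparison~\eqref{q34t3g3a}; in particular, $\sup_{|x'|\leq\rho_k}|f_\varepsilon(x')|=o(\rho_k)$ along the sequence $\rho_k\to\infty$ furnished by~\eqref{subcritical:mild}. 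Subcritical growth for $v_\varepsilon$ then follows from the integral representation by splitting the convolution at scale $\rho_k$: the near-field piece $|y'|\leq\rho_k$ is $o(\rho_k)$ since $|f_\varepsilon|=o(\rho_k)$ there and $\int|K^L(x'-y',t)|\,dy'\leq C$, while the far-field piece $|y'|>\rho_k$ is $o(\rho_k)$ by the decay~\eqref{Uddcv} of $K^L$ paired with $f_\varepsilon\in L^1(dy'/(1+|y'|^n))$. The Liouville principle itself is the main obstacle: in the present constant-coefficient system setting, I expect its proof to proceed by a reflection-type extension of $w_\varepsilon$ across the flat boundary (producing a null-solution of an associated elliptic system on all of $\mathbb{R}^n$ with subcritical growth), followed by a polynomial-growth Liouville theorem forcing the extension to be a constant, which must then vanish by the boundary condition. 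Once the uniqueness is available,
\begin{equation*}
u(x',t+\varepsilon)=\bigl(P^L_t\ast f_\varepsilon\bigr)(x')\quad\text{for every }(x',t)\in\mathbb{R}^n_{+}\text{ and }\varepsilon\in(0,\varepsilon_0).
\end{equation*}

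\emph{Passage to the limit and conclusion.} The remainder of the argument now follows the template of the proof of Theorem~\ref{thm:FP.111} verbatim. Since $|f_\varepsilon(x')|\leq\sup_{0<t<\varepsilon_0}|u(x',t)|$ for every $\varepsilon\in(0,\varepsilon_0)$, condition~\eqref{u-integ} places the sequence $\{f_{\varepsilon_0/(2j)}\}_{j\in\mathbb{N}}$ within the hypotheses of Lemma~\ref{WLLL}, producing some $f\in\bigl[L^1(\mathbb{R}^{n-1},dx'/(1+|x'|^n))\bigr]^M$ and a subsequence $\varepsilon_k\downarrow 0$ along which the weak-$\ast$ convergence~\eqref{2135racvs-NNN} holds. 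For each fixed $(x',t)\in\mathbb{R}^n_{+}$, the test function $\phi(y'):=(1+|y'|^n)P^L_t(x'-y')$ is bounded and continuous on $\mathbb{R}^{n-1}$ by~\eqref{eq:IG6gy}, so taking $\varepsilon_k\downarrow 0$ in the pre-limit identity (together with continuity of $u$ at $(x',t)$) yields
\begin{equation*}
u(x',t)=\bigl(P^L_t\ast f\bigr)(x')\quad\text{for every }(x',t)\in\mathbb{R}^n_{+}.
\end{equation*}
Theorem~\ref{thm:Poisson.II}(c) then identifies $u\big|^{{}^{\kappa-{\rm n.t.}}}_{\partial\mathbb{R}^n_{+}}$ with $f$ at $\mathcal{L}^{n-1}$-a.e.\ point of $\mathbb{R}^{n-1}$, establishing the three conclusions in~\eqref{Tafva.2222-ijk}; combining~\eqref{nkc-EE-4} with~\eqref{exist:Nu-Mf} applied to this representation finally yields the Pointwise Maximum Principle~\eqref{Taf-UHN.ER.2}.
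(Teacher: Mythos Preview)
Your overall architecture is exactly the one the paper indicates: translate by $\varepsilon$, obtain the Poisson representation $u_\varepsilon=P^L_t\ast f_\varepsilon$ for each $\varepsilon\in(0,\varepsilon_0)$ by a uniqueness argument, and then pass to the limit $\varepsilon\to 0^{+}$ via Lemma~\ref{WLLL} precisely as in the proof of Theorem~\ref{thm:FP.111}. The Liouville principle you invoke for $w_\varepsilon$ is nothing other than Theorem~\ref{thm:uniq-subcritical} in the paper (your $w_\varepsilon$ is even continuous up to the boundary with vanishing trace, so the hypotheses of that theorem are comfortably met once you have checked the subcritical growth of $w_\varepsilon$, which you do correctly along the subsequence furnished by~\eqref{subcritical:mild}). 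The paper explicitly records that ``Theorem~\ref{thm:Fatou} is established using Theorem~\ref{thm:uniq-subcritical},'' so at this structural level your proposal and the paper's route coincide.

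The one genuine gap is your proposed mechanism for the Liouville principle itself. You write that you ``expect its proof to proceed by a reflection-type extension of $w_\varepsilon$ across the flat boundary,'' producing a null-solution of an associated system on all of $\mathbb{R}^n$. The paper explicitly warns that this does \emph{not} work for the class of systems under consideration: in the paragraph following \eqref{UUU.uuu.3} it states that ``the new challenges in Theorem~\ref{thm:uniq-subcritical} stem from the absence of a Schwarz's reflection principle in the more general class of systems we are currently considering.'' For a generic second-order elliptic system $L$ with complex coefficients there is no odd-reflection that returns a null-solution of $L$ (or of any fixed companion system) in the lower half-space. The paper's proof of Theorem~\ref{thm:uniq-subcritical} circumvents this obstruction by appealing instead to Agmon--Douglis--Nirenberg estimates near the boundary. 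So: keep your blueprint, but at the step where you need $w_\varepsilon\equiv 0$, cite Theorem~\ref{thm:uniq-subcritical} (or its proof via ADN boundary estimates) rather than attempting a reflection argument.
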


The Fatou-type result established in Theorem~\ref{thm:Fatou} is optimal from a multitude 
of perspectives. First, the mildly weaker version of the subcritical growth condition stated 
in \eqref{subcritical:mild} cannot be relaxed. 
Indeed, fix $a\in{\mathbb{C}}^M\setminus\{0\}$ and consider the function $u(x',t):=ta$
for each $(x',t)\in\partial{\mathbb{R}}^n_{+}$. Then $\|u\|_{*,\varepsilon,\rho}=|a|>0$, 
hence \eqref{subcritical:mild} fails while \eqref{u-integ} and the first two properties listed in 
\eqref{Tafva.2222-ijk} hold. Nonetheless, the Poisson integral representation formula claimed in 
the last line of \eqref{Tafva.2222-ijk} fails 
(since $u\big|^{{}^{\kappa-{\rm n.t.}}}_{\partial{\mathbb{R}}^n_{+}}=0$ 
everywhere on ${\mathbb{R}}^{n-1}$ whereas $u$ is nonzero). 

Second, the finiteness integral condition \eqref{u-integ} may not be dropped. 
To justify this claim, bring in the Poisson kernel $P^L:\mathbb{R}^{n-1}\to\mathbb{C}^{M\times M}$ 
associated with the system $L$ as in Theorem~\ref{thm:Poisson} and, having fixed some 
$a\in{\mathbb{C}}^M\setminus\{0\}$, consider the function  $u_a$ defined as in \eqref{FCT-TR}. 
In addition to the properties this function enjoys mentioned earlier, for each $\varepsilon>0$ 
fixed we have $\|u_a\|_{*,\varepsilon,\rho}\leq C|a|\rho^{-1}\varepsilon^{1-n}\to 0$ as $\rho\to\infty$. 
However, the Poisson integral representation formula claimed in the last line of 
\eqref{Tafva.2222-ijk} 
obviously fails. The source of the failure is the fact that \eqref{u-integ} does not presently materialize 
(as already noted in \eqref{y65trta}). 

Third, as seen from \eqref{hgggf}, one cannot relax the formulation of the finiteness integral 
condition \eqref{u-integ} by placing the supremum outside the integral sign. 

\medskip 

In particular, Theorem~\ref{thm:Fatou} implies a uniqueness result, to the effect that whenever 
$L$ is an $M\times M$ homogeneous constant complex coefficient elliptic second-order system in ${\mathbb{R}}^n$
one has
\begin{equation}\label{Fatou-Uniqueness}
\left.
\begin{array}{r}
u\in\big[{\mathcal{C}}^\infty(\mathbb{R}^{n}_{+})\big]^M,\,\,Lu=0\,\,\mbox{ in }\,\,\mathbb{R}^{n}_{+}
\\[4pt]
\text{$u$ satisfies both \eqref{subcritical:mild} and \eqref{u-integ}}
\\[6pt]
u\big|^{{}^{\kappa-{\rm n.t.}}}_{\partial{\mathbb{R}}^{n}_{+}}=0
\,\,\text{ at ${\mathcal{L}}^{n-1}$-a.e. point in  }\,\,\mathbb{R}^{n-1}
\end{array}
\right\}
\Longrightarrow u\equiv 0\,\,\text{ in }\,\,{\mathbb{R}}^n_{+}.
\end{equation}
This should be compared with the following uniqueness result within the class 
of null-solutions of the system $L$ exhibiting subcritical growth (also established in \cite{SCGC}). 

\begin{theorem}\label{thm:uniq-subcritical}
Let $L$ be an $M\times M$ homogeneous constant complex coefficient elliptic second-order system in ${\mathbb{R}}^n$
and fix an aperture parameter $\kappa>0$. Assume $u\in\big[\mathcal{C}^\infty(\mathbb{R}^n_{+})\big]^M$ is such that 
$Lu=0$ in $\mathbb{R}_{+}^n$, $u$ satisfies the subcritical growth condition 
\begin{equation}\label{subcritical}
\liminf_{\rho\to\infty}\|u\|_{*,\rho}=0,
\end{equation}
and that $u\big|^{{}^{\kappa-{\rm n.t.}}}_{\partial{\mathbb{R}}^{n}_{+}}=0$ 
at ${\mathcal{L}}^{n-1}$-a.e. point in $\mathbb{R}^{n-1}$. Then $u\equiv 0$ in $\mathbb{R}^n_{+}$.
\end{theorem}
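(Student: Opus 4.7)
The plan is to derive Theorem~\ref{thm:uniq-subcritical} from the uniqueness assertion \eqref{Fatou-Uniqueness} already contained in Theorem~\ref{thm:Fatou}. That earlier result gives the conclusion $u\equiv 0$ for any null-solution $u$ of $L$ with vanishing nontangential boundary trace, provided that both the mild subcritical growth \eqref{subcritical:mild} and the integrability condition \eqref{u-integ} hold. Thus the task is to verify that the subcritical growth hypothesis \eqref{subcritical} from Theorem~\ref{thm:uniq-subcritical}, combined with the vanishing of $u\big|^{{}^{\kappa-{\rm n.t.}}}_{\partial{\mathbb{R}}^n_{+}}$, forces both \eqref{subcritical:mild} and \eqref{u-integ}.

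The first implication is immediate. From the comparison \eqref{q34t3g3a}, one has $\|u\|_{*,\varepsilon,\rho}\leq\sqrt{2}\,\|u\|_{*,\sqrt{2}\rho}$ for every $0<\varepsilon<\rho<\infty$. Taking $\liminf_{\rho\to\infty}$ and invoking \eqref{subcritical} yields $\liminf_{\rho\to\infty}\|u\|_{*,\varepsilon,\rho}=0$ for each fixed $\varepsilon>0$, which is precisely \eqref{subcritical:mild}.

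The main step is to establish \eqref{u-integ}. The plan is to exploit the vanishing nontangential trace at ${\mathcal{L}}^{n-1}$-a.e.\ point $x'\in\mathbb{R}^{n-1}$ to express $u(x',t_0)$ via the fundamental theorem of calculus as $u(x',t_0)=\lim_{a\to 0^+}\int_a^{t_0}(\partial_s u)(x',s)\,ds$, valid for such $x'$ and each $t_0\in(0,\varepsilon_0)$. Interior elliptic estimates for the constant-coefficient null-solution $u$ give pointwise control of $|\partial_s u|$ by the $L^\infty$-norm of $u$ on nearby interior balls, and the subcritical growth \eqref{subcritical} bounds these $L^\infty$-norms by $o(\rho)$ on balls of radius $\rho$ along a suitable sequence $\rho_k\to\infty$. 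A dyadic decomposition of $\mathbb{R}^{n-1}$ in the $|x'|$-variable, combined with Fubini in the $(x',s)$-integration and the integrable weight $(1+|x'|^n)^{-1}$, then delivers the required finiteness. Once \eqref{u-integ} is secured, the direct application of \eqref{Fatou-Uniqueness} forces $u\equiv 0$ throughout $\mathbb{R}^n_+$.

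The main obstacle to overcome is that the naive interior gradient bound carries a nonintegrable $1/s$ factor near $s=0$, which cannot be absorbed solely by the $\liminf$-type smallness in \eqref{subcritical} along a (possibly very sparse) sequence $\rho_k$. One way to circumvent this is to replace $u$ by its vertical translate $u_\varepsilon(x',t):=u(x',t+\varepsilon)$, run the argument above uniformly in $\varepsilon>0$ (where the gradient bound has no boundary singularity and the representation $u_\varepsilon=P^L_t\ast u(\cdot,\varepsilon)$ is legitimate), and then pass to the limit $\varepsilon\to 0^+$ using the weak-$*$ compactness tool from Lemma~\ref{WLLL}, exactly as in the derivation of Theorem~\ref{thm:FP.111} from Theorem~\ref{thm:FP} carried out earlier in the paper; the vanishing nontangential trace guarantees that the weak-$*$ limit of $u(\cdot,\varepsilon)$ is identically zero, forcing the Poisson extension, hence $u$ itself, to vanish.
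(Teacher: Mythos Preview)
Your proposal is circular. The paper explicitly records the logical dependency in the paragraph following \eqref{subcritical-counter}: ``Our proof of Theorem~\ref{thm:uniq-subcritical} circumvents these obstacles by making use of Agmon-Douglis-Nirenberg estimates near the boundary. In turn, Theorem~\ref{thm:Fatou} is established using Theorem~\ref{thm:uniq-subcritical}.'' Thus \eqref{Fatou-Uniqueness}, which is a corollary of Theorem~\ref{thm:Fatou}, sits \emph{downstream} of Theorem~\ref{thm:uniq-subcritical} in the paper's architecture. Deriving Theorem~\ref{thm:uniq-subcritical} from \eqref{Fatou-Uniqueness} begs the question.

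Setting circularity aside, your argument for \eqref{u-integ} also does not close. You correctly identify the obstruction (the nonintegrable $1/s$ factor from the interior gradient estimate), but your proposed workaround in the last paragraph reintroduces both difficulties at once. First, the claim that ``the representation $u_\varepsilon=P^L_t\ast u(\cdot,\varepsilon)$ is legitimate'' is precisely the Poisson integral formula whose proof in this setting (cf.\ the last line of \eqref{Tafva.2222-ijk}) relies on Theorem~\ref{thm:uniq-subcritical}; you cannot invoke it here. Second, to apply Lemma~\ref{WLLL} to the family $\{u(\cdot,\varepsilon_j)\}_j$ you need the hypothesis \eqref{523563fve-NNN}, which for this family is exactly the finiteness condition \eqref{u-integ} you were trying to establish. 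So the alternative route assumes what it sets out to prove.

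The paper's own approach is fundamentally different: it proves Theorem~\ref{thm:uniq-subcritical} directly, via Agmon-Douglis-Nirenberg boundary estimates, without any appeal to Fatou-type theorems or Poisson representations. This direct route is what makes Theorem~\ref{thm:uniq-subcritical} available as an input to Theorem~\ref{thm:Fatou} rather than the other way around.
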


We wish to note that the subcritical growth condition \eqref{subcritical} is sharp. Concretely, 
while $\liminf\limits_{\rho\to\infty}\|u\|_{*,\rho}$ always exists and is a non-negative 
number, its failure to vanish does not force $u$ to be identically zero. Indeed, for any 
$a\in{\mathbb{C}}^M\setminus\{0\}$ the function $u(x',t):=ta$ satisfies 
$u\in\big[\mathcal{C}^\infty(\overline{\mathbb{R}^n_{+}})\big]^M$, $Lu=0$ in $\mathbb{R}_{+}^n$, 
and $u\big|^{{}^{\kappa-{\rm n.t.}}}_{\partial{\mathbb{R}}^{n}_{+}}=0$ everywhere in $\mathbb{R}^{n-1}$ 
(for any aperture parameter $\kappa>0$). This being said, $\sup_{B(0,\rho)\cap\mathbb{R}^n_{+}}|u|=|a|\rho$ 
for each $\rho>0$, hence
\begin{equation}\label{subcritical-counter}
\liminf_{\rho\to\infty}\Big(\rho^{-1}\sup_{B(0,\rho)\cap\mathbb{R}^n_{+}}|u|\Big)=|a|>0.
\end{equation}

A comment on the genesis of the subcritical growth condition \eqref{subcritical} is in also order. 
Suppose $L:=\Delta$ (the Laplacian in ${\mathbb{R}}^n$) and one is interested in establishing 
a uniqueness result in the class of functions
\begin{equation}\label{UUU.uuu}
u\in{\mathcal{C}}^\infty({\mathbb{R}}^n_{+})\cap{\mathcal{C}}^0(\overline{{\mathbb{R}}^n_{+}})
\,\,\text{ with }\,\,\Delta u=0\,\,\text{ in }\,\,{\mathbb{R}}^n_{+},
\end{equation}
to the effect that the boundary trace $u\big|_{\partial{\mathbb{R}}^n_{+}}$ determines $u$. 
Since $u(x',t)=t$ for each $(x',t)\in{\mathbb{R}}^n_{+}$ is a counterexample, a further demand must 
be imposed, in addition to \eqref{UUU.uuu}, to rule out this pathological example. To identify this demand, 
consider a function $u$ as in \eqref{UUU.uuu} which satisfies $u\big|_{\partial{\mathbb{R}}^n_{+}}=0$. 
Then Schwarz's reflection principle ensures that
\begin{equation}\label{UUU.uuu.2}
\widetilde{u}(x',t):=\left\{
\begin{array}{ll}
u(x',t) &\text{ if }\,\,t\geq 0,
\\[4pt]
-u(x',-t) &\text{ if }\,\,t>0,
\end{array}
\right.
\qquad\forall\,(x',t)\in{\mathbb{R}}^n,
\end{equation}
is a harmonic function in ${\mathbb{R}}^n$. Interior estimates then imply the existence of a
dimensional constant $C_n\in(0,\infty)$ with the property that 
\begin{equation}\label{UUU.uuu.3}
\big|(\nabla\widetilde{u})(x)\big|\leq C_n\rho^{-1}\cdot\sup_{B(x,\rho)}\big|\widetilde{u}\big|
\,\,\text{ for each }\,\,x\in{\mathbb{R}}^n\,\,\text{ and }\,\,\rho>0.
\end{equation}
In this context, it is clear that the subcritical growth condition \eqref{subcritical} is a 
quantitatively optimal property guaranteeing the convergence to zero of the right-hand side
of the inequality in \eqref{UUU.uuu.3} as $\rho\to\infty$, for each $x\in{\mathbb{R}}^n$ fixed.
And this is precisely what is needed here since this further implies $\nabla\widetilde{u}\equiv 0$ 
in ${\mathbb{R}}^n$, which ultimately forces $u\equiv 0$ in ${\mathbb{R}}^n$.

The new challenges in Theorem~\ref{thm:uniq-subcritical} stem from the absence of a Schwarz's reflection principle
in the more general class of systems we are currently considering, and the lack of continuity of the function 
at boundary points. Our proof of Theorem~\ref{thm:uniq-subcritical} circumvents these obstacles by making 
use of Agmon-Douglis-Nirenberg estimates near the boundary. In turn, Theorem~\ref{thm:Fatou} is established 
using Theorem~\ref{thm:uniq-subcritical}. 

\medskip 

Pressing ahead, it is also worth contrasting the subcritical growth condition \eqref{subcritical} with the finiteness 
integral condition \eqref{u-integ}. Concretely, whenever $u\in\big[\mathcal{C}^\infty(\mathbb{R}^n_{+})\big]^M$ 
satisfies \eqref{subcritical} and $f:=u\big|^{{}^{\kappa-{\rm n.t.}}}_{\partial{\mathbb{R}}^n_{+}}$ 
exists at ${\mathcal{L}}^{n-1}$-a.e. point in ${\mathbb{R}}^{n-1}$ then necessarily $f$ is a 
locally bounded function; in fact, 
\begin{equation}\label{zxcvhsdr5-XXX}
\liminf_{\rho\to\infty}\rho^{-1}\|f\|_{[L^\infty(B_{n-1}(0',\rho))]^M}
\leq\liminf_{\rho\to\infty}\|u\|_{*,\rho}=0.
\end{equation}
As such, in the context of boundary value problems for the system $L$ in the upper half-space,  
the subcritical growth condition \eqref{subcritical} is most relevant whenever the formulation 
of the problem in question involves boundary data functions which are locally bounded (more 
precisely, satisfying the condition formulated in \eqref{zxcvhsdr5-XXX}). On the other hand, 
having a function $u\in\big[\mathcal{C}^\infty(\mathbb{R}^n_{+})\big]^M$ satisfying the finiteness integral 
condition \eqref{u-integ} and such that $f:=u\big|^{{}^{\kappa-{\rm n.t.}}}_{\partial{\mathbb{R}}^n_{+}}$ 
exists at ${\mathcal{L}}^{n-1}$-a.e. point in ${\mathbb{R}}^{n-1}$, guarantees that $f$ belongs to the weighted 
Lebesgue space $\Big[L^1\big({\mathbb{R}}^{n-1}\,,\,\tfrac{dx'}{1+|x'|^{n}}\big)\Big]^M$. This membership  
(which is the most general condition allowing one to define null-solutions to the system $L$ by taking 
the convolution with the Poisson kernel $P^L$ described in Theorem~\ref{thm:Poisson}) does not force 
$f$ to be locally bounded. 

Comparing the uniqueness statements from Theorem~\ref{thm:uniq-subcritical} and \eqref{Fatou-Uniqueness},
it is worth noting that the subcritical growth condition \eqref{subcritical} appearing in 
Theorem~\ref{thm:uniq-subcritical} decouples into \eqref{subcritical:mild} and 
\begin{equation}\label{semi-norms-eps.YYY}
\liminf_{\rho\to\infty}\Bigg[\rho^{-1}\sup_{\substack{0<t<\varepsilon\\ |x'|<\rho}}|u(x',t)|\Bigg]
=0\,\,\,\text{ for each fixed }\,\,\varepsilon>0.
\end{equation}
By way of contrast, in \eqref{Fatou-Uniqueness} in place of \eqref{semi-norms-eps.YYY} 
we are employing the finiteness integral condition \eqref{u-integ}.

In relation to the Fatou-type results discussed so far we wish to raise the following issue. 

\vskip 0.08in
{\bf Open Question~1.} 
{\it Can the format of the Fatou-type result from Theorem~\ref{thm:FP.111} be reconciled with that
of Theorem~\ref{thm:Fatou}? In other words, are these two seemingly distinct results particular manifestations
of a more general, inclusive phenomenon?}
\vskip 0.08in

Moving on, we say that a Lebesgue measurable function $f:\mathbb{R}^{n-1}\rightarrow\mathbb{C}$ 
belongs to the class of functions with subcritical growth, denoted ${\rm SCG}(\mathbb{R}^{n-1})$, 
provided 
\begin{equation}\label{SCG-f}
\int_{\mathbb{R}^{n-1}}\frac{|f(x')|}{1+|x'|^n}\,dx'<\infty\,\,\text{ and }\,\,
\lim_{\rho\to\infty}\Big[\rho^{-1}\|f\|_{L^\infty(B_{n-1}(0',\rho))}\Big]=0.
\end{equation}
As indicated in the corollary below (which appears in \cite{SCGC}), 
there is a Fatou-type result in the context of Theorem~\ref{thm:uniq-subcritical}
provided we slightly strengthen the condition demanded in \eqref{subcritical}. 

\begin{corollary}\label{thm:Fatou-SCG}
Let $L$ be an $M\times M$ homogeneous constant complex coefficient elliptic second-order system in ${\mathbb{R}}^n$. 
Assume $u\in\big[\mathcal{C}^\infty(\mathbb{R}^n_{+})\big]^M$ satisfies $Lu=0$ in $\mathbb{R}_{+}^n$, as well as 
the following Dini type condition at infinity:
\begin{equation}\label{41fgfgv}
\int_1^\infty\|u\|_{*,t}\,\frac{dt}{t}
=\int_1^\infty\Big(\sup_{B(0,t)\cap\mathbb{R}^n_{+}}|u|\Big)\,\frac{dt}{t^2}<\infty.
\end{equation}	

Then, for each aperture parameter $\kappa>0$, 
\begin{align}\label{Tafva.222234t5w5}
\begin{array}{l}
\big(u\big|^{{}^{\kappa-{\rm n.t.}}}_{\partial{\mathbb{R}}^n_{+}}\big)(x')
\,\,\text{ exists at ${\mathcal{L}}^{n-1}$-a.e. point }\,\,x'\in{\mathbb{R}}^{n-1},
\\[10pt]
\displaystyle
u\big|^{{}^{\kappa-{\rm n.t.}}}_{\partial{\mathbb{R}}^n_{+}}\,\,\text{ belongs to }\,\,
\big[{\rm SCG}({\mathbb{R}}^{n-1})\big]^M\subset\Big[L^1\Big({\mathbb{R}}^{n-1}\,,\,\frac{dx'}{1+|x'|^{n}}\Big)\Big]^M,
\\[12pt]
u(x',t)=\Big(P^L_t\ast\big(u\big|^{{}^{\kappa-{\rm n.t.}}}_{\partial{\mathbb{R}}^n_{+}}\big)\Big)(x')
\,\,\text{ for each point }\,\,(x',t)\in{\mathbb{R}}^n_{+}.
\end{array}
\end{align}
In particular, there exists a constant $C=C(L,\kappa)\in(0,\infty)$ for which  
the following Pointwise Maximum Principle holds:
\begin{equation}\label{Taf-UHN.ER.3}
\big|u\big|^{{}^{\kappa-{\rm n.t.}}}_{\partial{\mathbb{R}}^n_{+}}\big|
\leq{\mathcal{N}}_\kappa u\leq C{\mathcal{M}}\big(u\big|^{{}^{\kappa-{\rm n.t.}}}_{\partial{\mathbb{R}}^n_{+}}\big)
\,\,\text{ in }\,\,{\mathbb{R}}^{n-1}.
\end{equation}
\end{corollary}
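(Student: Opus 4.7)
The plan is to derive Corollary~\ref{thm:Fatou-SCG} as a consequence of Theorem~\ref{thm:Fatou}, and then to upgrade the weighted $L^1$-membership of the boundary trace to full SCG-membership.

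The first ingredient is that the Dini condition \eqref{41fgfgv} combined with the averaging estimate \eqref{fvqafr} yields
\[
\|u\|_{*,\rho}\leq \frac{2}{\log 2}\int_\rho^{2\rho}\|u\|_{*,t}\,\frac{dt}{t}\longrightarrow 0\,\,\text{ as }\,\, \rho\to\infty,
\]
because the right-hand side is the tail of a convergent integral. In addition, since $t\mapsto\sup_{B(0,t)\cap\mathbb{R}^n_{+}}|u|$ is non-decreasing and the Dini integral forces this supremum to be finite for some (and hence, by monotonicity, every smaller) value of $t$, it is in fact finite for every $t>0$.

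Next I would verify the two hypotheses of Theorem~\ref{thm:Fatou}. The mild subcritical growth condition \eqref{subcritical:mild} is immediate from the display above together with \eqref{q34t3g3a}, since for each fixed $\varepsilon>0$ one has $\|u\|_{*,\varepsilon,\rho}\leq\sqrt{2}\,\|u\|_{*,\sqrt{2}\rho}\to 0$ as $\rho\to\infty$. For the finiteness integral condition \eqref{u-integ}, I use that every $(x',t)$ with $0<t<\varepsilon_0$ lies in $B(0,R(x'))\cap\mathbb{R}^n_{+}$ with $R(x'):=\sqrt{|x'|^2+\varepsilon_0^2}$, so that
\[
\sup_{0<t<\varepsilon_0}|u(x',t)|\leq R(x')\,\|u\|_{*,R(x')}.
\]
Passing to polar coordinates in $\mathbb{R}^{n-1}$, the region $|x'|\leq\varepsilon_0$ contributes a finite amount by the local boundedness established in the previous paragraph, while over $|x'|>\varepsilon_0$ the substitution $t\approx|x'|$ reduces the integral to a constant multiple of the tail $\int_{c\varepsilon_0}^{\infty}\|u\|_{*,t}\,\frac{dt}{t}$, finite by \eqref{41fgfgv}.

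With both hypotheses of Theorem~\ref{thm:Fatou} in hand, that result immediately delivers the a.e. existence of $f:=u\big|^{{}^{\kappa-\mathrm{n.t.}}}_{\partial\mathbb{R}^n_{+}}$, its weighted $L^1$-membership, the Poisson representation in the third line of \eqref{Tafva.222234t5w5}, and the Pointwise Maximum Principle \eqref{Taf-UHN.ER.3}. It only remains to upgrade the weighted $L^1$-bound to SCG-membership by establishing $\lim_{\rho\to\infty}\rho^{-1}\|f\|_{L^\infty(B_{n-1}(0',\rho))}=0$. For this, I exploit that at every $x'_0$ where the nontangential trace exists, the vertical segment $\{(x'_0,\delta):0<\delta<1\}$ lies in $\Gamma_\kappa(x'_0)$, so that
\[
|f(x'_0)|=\lim_{\delta\to 0^+}|u(x'_0,\delta)|\leq \sup_{0<\delta<1}|u(x'_0,\delta)|\leq \sqrt{|x'_0|^2+1}\,\|u\|_{*,\sqrt{|x'_0|^2+1}};
\]
taking the essential supremum over $|x'_0|\leq\rho$ and dividing by $\rho$ yields a bound of order $\|u\|_{*,\sqrt{\rho^2+1}}$, which vanishes as $\rho\to\infty$ by the first step. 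The main technical obstacle I foresee is the verification of \eqref{u-integ}: the weight $1/(1+|x'|^n)$ behaves differently near the origin and near infinity, so one must combine the monotonicity of $t\mapsto\sup_{B(0,t)\cap\mathbb{R}^n_{+}}|u|$ (to handle the region near $0'$) with the convergence of the Dini integral (to handle the region at infinity); everything else is either routine bookkeeping or a direct appeal to Theorem~\ref{thm:Fatou}.
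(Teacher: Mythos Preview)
Your proposal is correct and follows essentially the same route as the paper: verify \eqref{subcritical:mild} via \eqref{fvqafr}--\eqref{q34t3g3a} and the vanishing of the Dini tail, verify \eqref{u-integ} by controlling $\sup_{0<t<\varepsilon_0}|u(x',t)|$ in terms of $\|u\|_{*,\cdot}$ and reducing to the Dini integral (the paper phrases this as a ``dyadic decomposition argument'' yielding \eqref{trrr}), invoke Theorem~\ref{thm:Fatou}, and finally obtain the SCG-membership of the trace from the subcritical growth of $u$ (what the paper calls the ``hereditary'' property, cf.\ \eqref{zxcvhsdr5}). The only cosmetic differences are that you argue the vanishing of $\|u\|_{*,\rho}$ directly from the tail of a convergent integral rather than via dominated convergence, and you spell out the SCG step explicitly rather than citing heredity.
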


Indeed, from \eqref{fvqafr}, \eqref{q34t3g3a}, \eqref{41fgfgv}, and Lebesgue’s Dominated Convergence Theorem 
it follows that \eqref{subcritical:mild} holds. Also, based on a dyadic decomposition argument
and \eqref{fvqafr} one can show that 
\begin{equation}\label{trrr}
\int_{\mathbb{R}^{n-1}}\frac{\sup_{0<t<1}|u(x',t)|}{1+|x'|^n}\,dx'
\leq C_n\int_1^\infty\|u\|_{*,t}\,\frac{dt}{t}.
\end{equation}
In view of \eqref{41fgfgv}, this means that \eqref{u-integ} holds. As a result, 
Theorem~\ref{thm:Fatou} 
applies and gives \eqref{Tafva.2222-ijk}. Together with the fact that the subcritical growth property is hereditary, 
we then conclude that all claims in \eqref{Tafva.222234t5w5} are true.

\section{Well-Posedness of Boundary Value Problems}
\label{S-3}

In this section we shall use the Poisson kernels and Fatou-type theorems from \S\ref{S-2} as tools 
for establishing the well-posedness of a variety of boundary value problems in the upper half-space 
${\mathbb{R}}^n_{+}$ for second-order, homogeneous, constant complex coefficient, elliptic systems in ${\mathbb{R}}^n$.

\subsection{The Dirichlet problem with boundary data from weighted $L^1$}
The template of the Fatou-type result from Theorem~\ref{thm:FP} prefigures the format of the well-posedness
result discussed in the theorem below. 

\begin{theorem}\label{Them-Gen}
Let $L$ be an $M\times M$ system with constant complex coefficients as in \eqref{L-def}-\eqref{L-ell.X}, 
and fix an aperture parameter $\kappa>0$. Then for each function 
\begin{align}\label{76tFfaf-7GF}
\begin{array}{c}
f:{\mathbb{R}}^{n-1}\to{\mathbb{C}}^{M}\,\,\text{ Lebesgue measurable}
\\[6pt]
\text{and }\,\,{\mathcal{M}}f\in L^1\big({\mathbb{R}}^{n-1}\,,\,\tfrac{dx'}{1+|x'|^{n-1}}\big)
\end{array}
\end{align}
{\rm (}recall that ${\mathcal{M}}$ is the Hardy-Littlewood maximal operator in ${\mathbb{R}}^{n-1}${\rm )} 
the boundary value problem
\begin{equation}\label{jk-lm-jhR-LLL-HM-RN.w.BVP}
\left\{
\begin{array}{l}
u\in\big[{\mathcal{C}}^{\infty}({\mathbb{R}}^n_{+})\big]^M,
\quad Lu=0\,\,\text{ in }\,\,{\mathbb{R}}^n_{+},
\\[8pt]
\displaystyle
\int_{\mathbb{R}^{n-1}}\big({\mathcal{N}}_{\kappa}u\big)(x')\,\frac{dx'}{1+|x'|^{n-1}}<\infty,
\\[12pt]
u\big|^{{}^{\kappa-{\rm n.t.}}}_{\partial{\mathbb{R}}^n_{+}}=f
\,\,\text{ at ${\mathcal{L}}^{n-1}$-a.e. point in }\,\,{\mathbb{R}}^{n-1},
\end{array}
\right.
\end{equation}
is uniquely solvable. Moreover, the solution $u$ of \eqref{jk-lm-jhR-LLL-HM-RN.w.BVP} 
is given by \eqref{exist:u} and satisfies
\begin{align}\label{jk-lm-jhR-LLL-HM-RN.w.BVP.2}
\int_{\mathbb{R}^{n-1}}\frac{|f(x')|}{1+|x'|^{n-1}}\,dx'
&\leq\int_{\mathbb{R}^{n-1}}\big({\mathcal{N}}_{\kappa}u\big)(x')\,\frac{dx'}{1+|x'|^{n-1}}
\nonumber\\[6pt]
&\leq C\int_{\mathbb{R}^{n-1}}\big({\mathcal{M}}f\big)(x')\,\frac{dx'}{1+|x'|^{n-1}}
\end{align}
for some constant $C=C(n,L,\kappa)\in(0,\infty)$ independent of $f$. 
\end{theorem}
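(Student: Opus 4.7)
The plan is to establish \eqref{Them-Gen} by separating existence, the two-sided estimate, and uniqueness, leveraging respectively item \emph{(c)} of Theorem~\ref{thm:Poisson.II}, the pointwise bound \eqref{exist:Nu-Mf}, and the Fatou-type result of Theorem~\ref{thm:FP}.

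For \textbf{existence}, I would first verify that the hypothesis \eqref{76tFfaf-7GF} entails the integrability condition \eqref{exist:f} required to apply Theorem~\ref{thm:Poisson.II}\emph{(c)}. Indeed, since $\mathcal{M}f$ is locally integrable (as it is integrable against $dx'/(1+|x'|^{n-1})$), $f$ itself lies in $[L^1_{\rm loc}(\mathbb{R}^{n-1})]^M$, and the Lebesgue differentiation theorem gives $|f(x')|\leq(\mathcal{M}f)(x')$ at $\mathcal{L}^{n-1}$-a.e.\ $x'$. Therefore
\begin{equation*}
\int_{\mathbb{R}^{n-1}}\frac{|f(x')|}{1+|x'|^{n}}\,dx'
\leq\int_{\mathbb{R}^{n-1}}\frac{(\mathcal{M}f)(x')}{1+|x'|^{n-1}}\,dx'<\infty,
\end{equation*}
so \eqref{exist:f} holds. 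Defining $u(x',t):=(P^L_t\ast f)(x')$ as in \eqref{exist:u}, Theorem~\ref{thm:Poisson.II}\emph{(c)} then yields at once that $u\in[\mathcal{C}^{\infty}(\mathbb{R}^n_{+})]^M$, $Lu=0$ in $\mathbb{R}^n_{+}$, and $u\big|^{{}^{\kappa-{\rm n.t.}}}_{\partial\mathbb{R}^n_{+}}=f$ at $\mathcal{L}^{n-1}$-a.e.\ point of $\mathbb{R}^{n-1}$.

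For the \textbf{estimate} \eqref{jk-lm-jhR-LLL-HM-RN.w.BVP.2}, the second inequality is obtained by integrating the pointwise bound \eqref{exist:Nu-Mf} against $dx'/(1+|x'|^{n-1})$; this simultaneously confirms the middle condition in \eqref{jk-lm-jhR-LLL-HM-RN.w.BVP}. The first inequality comes from \eqref{nkc-EE-4}, since the already-established identification $u\big|^{{}^{\kappa-{\rm n.t.}}}_{\partial\mathbb{R}^n_{+}}=f$ a.e.\ forces $|f(x')|\leq(\mathcal{N}_\kappa u)(x')$ at $\mathcal{L}^{n-1}$-a.e.\ $x'\in\mathbb{R}^{n-1}$.

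For \textbf{uniqueness}, suppose $u_1,u_2$ both solve \eqref{jk-lm-jhR-LLL-HM-RN.w.BVP} and set $v:=u_1-u_2$. Then $v\in[\mathcal{C}^{\infty}(\mathbb{R}^n_{+})]^M$, $Lv=0$ in $\mathbb{R}^n_{+}$, the sublinearity of the nontangential maximal operator gives $\mathcal{N}_\kappa v\leq\mathcal{N}_\kappa u_1+\mathcal{N}_\kappa u_2$ so that $\int_{\mathbb{R}^{n-1}}(\mathcal{N}_\kappa v)\,dx'/(1+|x'|^{n-1})<\infty$, and $v\big|^{{}^{\kappa-{\rm n.t.}}}_{\partial\mathbb{R}^n_{+}}=0$ a.e. The hypotheses of Theorem~\ref{thm:FP} are thus in place, and the Poisson integral representation from \eqref{Tafva.2222.iii} yields $v(x',t)=(P^L_t\ast 0)(x')=0$ for every $(x',t)\in\mathbb{R}^n_{+}$, hence $u_1\equiv u_2$.

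The main (and really only) conceptual obstacle is the initial step of showing that the hypothesis expressed purely in terms of $\mathcal{M}f$ is strong enough to feed both machines at our disposal — namely, to trigger Theorem~\ref{thm:Poisson.II}\emph{(c)} for existence \emph{and} to yield the weighted-$L^1$ finiteness of $\mathcal{N}_\kappa u$ needed to invoke Theorem~\ref{thm:FP} on the difference of two solutions. Once the Hardy-Littlewood envelope is recognized as the correct gauge matching the weight $dx'/(1+|x'|^{n-1})$, the remainder of the proof is a direct assembly of tools already in place.
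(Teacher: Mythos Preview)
Your proof is correct and follows essentially the same approach as the paper's: both deduce from $\mathcal{M}f\in L^1\big(\mathbb{R}^{n-1},\tfrac{dx'}{1+|x'|^{n-1}}\big)$ that $f\in L^1_{\rm loc}$ and hence (via Lebesgue differentiation) $|f|\leq\mathcal{M}f$ a.e., then invoke Theorem~\ref{thm:Poisson.II}\emph{(c)} for existence, \eqref{nkc-EE-4} and \eqref{exist:Nu-Mf} for the two-sided estimate, and Theorem~\ref{thm:FP} for uniqueness. Your write-up simply makes the uniqueness step slightly more explicit by passing to the difference $v=u_1-u_2$, whereas the paper dispatches it in a single sentence.
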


For each $f$ as in \eqref{76tFfaf-7GF}, the membership of ${\mathcal{M}}f$ 
to $L^1\big({\mathbb{R}}^{n-1}\,,\,\tfrac{dx'}{1+|x'|^{n-1}}\big)$ implies 
that ${\mathcal{M}}f<\infty$ at ${\mathcal{L}}^{n-1}$-a.e. point in ${\mathbb{R}}^{n-1}$, 
which further entails $f\in\big[L^1_{\rm loc}({\mathbb{R}}^{n-1})\big]^M$. Granted this, 
Lebesgue's Differentiation Theorem applies and gives $|f|\leq{\mathcal{M}}f$ at 
${\mathcal{L}}^{n-1}$-a.e. point in ${\mathbb{R}}^{n-1}$. From this, the fact that 
$f$ is Lebesgue measurable, and the last property in \eqref{76tFfaf-7GF},  
we ultimately conclude that 
\begin{align}\label{76tFfaf-7GF.ewq}
f\in\Big[L^1\big({\mathbb{R}}^{n-1}\,,\,\tfrac{dx'}{1+|x'|^{n-1}}\big)\Big]^M
\subset\Big[L^1\big({\mathbb{R}}^{n-1}\,,\,\tfrac{dx'}{1+|x'|^{n}}\big)\Big]^M.
\end{align}
In particular, it is meaningful to define $u$ as in \eqref{exist:u}, and this ensures that 
the properties claimed in the first and last lines of \eqref{jk-lm-jhR-LLL-HM-RN.w.BVP} hold. 
Also, \eqref{nkc-EE-4} and \eqref{exist:Nu-Mf} imply \eqref{jk-lm-jhR-LLL-HM-RN.w.BVP.2} which, in turn, 
validates the finiteness condition in the second line of \eqref{jk-lm-jhR-LLL-HM-RN.w.BVP}.
This proves existence for the boundary value problem \eqref{jk-lm-jhR-LLL-HM-RN.w.BVP}, 
and uniqueness follows from Theorem~\ref{thm:FP}.

\subsection{The Dirichlet problem with data from $L^p$ and other related spaces}

The well-posedness of the $L^p$-Dirichlet boundary value problem was established in \cite{K-MMMM}.
As noted in \cite{SCGC}, our earlier results yield an alternative approach. 

\begin{theorem}\label{thm:Lp}
Let $L$ be an $M\times M$ homogeneous constant complex coefficient elliptic second-order system in ${\mathbb{R}}^n$ 
and fix an aperture parameter $\kappa>0$. For any $p\in(1,\infty)$ the $L^p$-Dirichlet boundary value problem for 
$L$ in $\mathbb{R}^{n}_{+}$, i.e., 
\begin{equation}\label{Dir-BVP-p}
\left\{
\begin{array}{l}
u\in\big[{\mathcal{C}}^\infty(\mathbb{R}^{n}_{+})\big]^M,\quad Lu=0\,\,\mbox{ in }\,\,\mathbb{R}^{n}_{+},
\\[4pt]
\mathcal{N}_\kappa u\,\,\text{ belongs to the space }\,\,L^p(\mathbb{R}^{n-1}),
\\[6pt]
u\big|^{{}^{\kappa-{\rm n.t.}}}_{\partial{\mathbb{R}}^{n}_{+}}=f
\,\,\text{ at ${\mathcal{L}}^{n-1}$-a.e. point in }\,\,{\mathbb{R}}^{n-1},
\end{array}
\right.
\end{equation}
has a unique solution for each $f\in\big[L^p(\mathbb{R}^{n-1})\big]^M$. 
Moreover, the solution $u$ of \eqref{Dir-BVP-p} is given by \eqref{exist:u} and satisfies 
\begin{equation}\label{eJHBbawvr}
\|f\|_{[L^p(\mathbb{R}^{n-1})]^M}\leq
\|\mathcal{N}_\kappa u\|_{L^p(\mathbb{R}^{n-1})}\leq C\|f\|_{[L^p(\mathbb{R}^{n-1})]^M}
\end{equation}
for some constant $C\in[1,\infty)$ that depends only on $L$, $n$, $p$, and $\kappa$.
\end{theorem}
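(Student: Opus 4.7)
\textbf{Plan for the proof of Theorem~\ref{thm:Lp}.}

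My strategy is to reduce both existence and uniqueness to the machinery already developed. For existence, I would simply define $u$ by the Poisson integral formula \eqref{exist:u} and verify that Theorem~\ref{thm:Poisson.II}(c) supplies everything required. Given $f\in[L^p(\mathbb{R}^{n-1})]^M$ with $p\in(1,\infty)$, H\"older's inequality yields
\begin{equation*}
\int_{\mathbb{R}^{n-1}}\frac{|f(x')|}{1+|x'|^n}\,dx'\leq\|f\|_{[L^p(\mathbb{R}^{n-1})]^M}\cdot\big\|(1+|\cdot|^n)^{-1}\big\|_{L^{p'}(\mathbb{R}^{n-1})}<\infty,
\end{equation*}
since $(1+|x'|^n)^{-p'}$ is integrable on $\mathbb{R}^{n-1}$ whenever $n\geq 2$ and $p'\geq 1$. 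This verifies the integrability hypothesis \eqref{exist:f}, so Theorem~\ref{thm:Poisson.II}(c) produces a $u\in[\mathcal{C}^\infty(\mathbb{R}^n_{+})]^M$ with $Lu=0$ in $\mathbb{R}^n_{+}$, whose nontangential boundary trace equals $f$ at ${\mathcal{L}}^{n-1}$-a.e.\ point in $\mathbb{R}^{n-1}$, and which satisfies the key pointwise domination $\mathcal{N}_\kappa u\leq C\,\mathcal{M} f$ from \eqref{exist:Nu-Mf}.

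Combining this pointwise bound with the classical $L^p$-boundedness of the Hardy--Littlewood maximal operator (valid precisely because $p\in(1,\infty)$) gives $\|\mathcal{N}_\kappa u\|_{L^p(\mathbb{R}^{n-1})}\leq C\,\|f\|_{[L^p(\mathbb{R}^{n-1})]^M}$, which is the right inequality in \eqref{eJHBbawvr} and ensures $\mathcal{N}_\kappa u\in L^p(\mathbb{R}^{n-1})$ as demanded by \eqref{Dir-BVP-p}. The left inequality in \eqref{eJHBbawvr} is immediate from the pointwise estimate $|u\big|^{{}^{\kappa-{\rm n.t.}}}_{\partial\mathbb{R}^n_{+}}|\leq\mathcal{N}_\kappa u$ recorded in \eqref{nkc-EE-4}, together with the fact that this trace equals $f$ a.e. This completes existence, together with the quantitative estimates.

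For uniqueness, by linearity it suffices to show that if $u$ solves \eqref{Dir-BVP-p} with $f\equiv 0$, then $u\equiv 0$. Given such a $u$, I would apply H\"older's inequality with conjugate exponents $p,p'$ to obtain
\begin{equation*}
\int_{\mathbb{R}^{n-1}}(\mathcal{N}_\kappa u)(x')\,\frac{dx'}{1+|x'|^{n-1}}\leq\|\mathcal{N}_\kappa u\|_{L^p(\mathbb{R}^{n-1})}\cdot\big\|(1+|\cdot|^{n-1})^{-1}\big\|_{L^{p'}(\mathbb{R}^{n-1})}<\infty,
\end{equation*}
where the weight norm is finite because $(n-1)p'>n-1$, an inequality guaranteed by $p<\infty$. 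Hence the hypotheses of Theorem~\ref{thm:FP} are verified, and the Poisson integral representation in the last line of \eqref{Tafva.2222.iii} gives $u(x',t)=\big(P^L_t\ast (u\big|^{{}^{\kappa-{\rm n.t.}}}_{\partial{\mathbb{R}}^n_{+}})\big)(x')=\big(P^L_t\ast 0\big)(x')=0$ throughout $\mathbb{R}^n_{+}$.

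Concerning the main obstacle: once Theorem~\ref{thm:Poisson.II} and Theorem~\ref{thm:FP} are in hand, the argument is largely bookkeeping, because those theorems were engineered precisely to produce nontangential maximal control via $\mathcal{M}$ on one side, and a Fatou-type representation on the other. The real leverage lies in the pointwise bound $\mathcal{N}_\kappa u\leq C\,\mathcal{M}f$ from Theorem~\ref{thm:Poisson.II}(c); without it one would be forced into an independent $L^p$-bound for the nontangential maximal function of the Poisson integral, which is the genuinely technical ingredient. Since we may quote it here, the remaining work reduces to two applications of H\"older's inequality and an invocation of the $L^p$-boundedness of $\mathcal{M}$.
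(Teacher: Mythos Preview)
Your proof is correct and follows essentially the same approach as the paper. The only cosmetic difference is that the paper routes both existence and uniqueness through the intermediate Theorem~\ref{Them-Gen} (noting that $L^p(\mathbb{R}^{n-1})\hookrightarrow L^1\big(\mathbb{R}^{n-1},\tfrac{dx'}{1+|x'|^{n-1}}\big)$ and that $\mathcal{M}$ is $L^p$-bounded, so $\mathcal{M}f$ lies in the weighted $L^1$ space required there), whereas you bypass that theorem and invoke Theorem~\ref{thm:Poisson.II}(c) and Theorem~\ref{thm:FP} directly; since Theorem~\ref{Them-Gen} is itself proved from exactly those two ingredients, the arguments are the same in substance.
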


Indeed, since 
\begin{align}\label{76tFfaf-7GF.abx}
L^p(\mathbb{R}^{n-1})\hookrightarrow L^1\big({\mathbb{R}}^{n-1}\,,\,\tfrac{dx'}{1+|x'|^{n-1}}\big)
\,\,\text{ for each }\,\,p\in[1,\infty),
\end{align}
and since the Hardy-Littlewood maximal operator 
\begin{align}\label{76tFfaf-7GF.aby}
{\mathcal{M}}:L^p({\mathbb{R}}^{n-1})\to L^p({\mathbb{R}}^{n-1})\,\,\text{ is bounded for each }\,\,p\in(1,\infty], 
\end{align}
we may regard \eqref{Dir-BVP-p} as a ``sub-problem'' of \eqref{jk-lm-jhR-LLL-HM-RN.w.BVP}.
As such, Theorem~\ref{Them-Gen} ensures existence (in the specified format) and uniqueness. 
The estimates claimed in \eqref{eJHBbawvr} are implied by \eqref{nkc-EE-4}, \eqref{exist:Nu-Mf}, 
and \eqref{76tFfaf-7GF.aby}.

\begin{remark}\label{ttFCCa}
A multitude of other important ``sub-problems'' of \eqref{jk-lm-jhR-LLL-HM-RN.w.BVP} present themselves. 
For example, if for each $p\in(1,\infty)$ and each Muckenhoupt weight $w\in A_p({\mathbb{R}}^{n-1})$ 
{\rm (}cf., e.g., \cite{GCRF85}{\rm )} we let $L^p_w(\mathbb{R}^{n-1})$ denote the space of Lebesgue 
measurable $p$-th power integrable functions in ${\mathbb{R}}^{n-1}$ with respect to the measure $w{\mathcal{L}}^{n-1}$, 
then the fact that (cf. \cite{K-MMMM})
\begin{align}\label{76tFfaf-7GF.abx.W}
\begin{array}{c}
L^p_w(\mathbb{R}^{n-1})\hookrightarrow L^1\big({\mathbb{R}}^{n-1}\,,\,\tfrac{dx'}{1+|x'|^{n-1}}\big)
\,\,\text{and}
\\[6pt]
{\mathcal{M}}:L^p_w({\mathbb{R}}^{n-1})\to L^p_w({\mathbb{R}}^{n-1})\,\,\text{ boundedly},
\end{array}
\end{align}
ultimately implies that for each integrability exponent $p\in(1,\infty)$, each weight $w\in A_p({\mathbb{R}}^{n-1})$, 
and each aperture parameter $\kappa>0$, the $L^p_w$-Dirichlet boundary value problem for $L$ in $\mathbb{R}^{n}_{+}$, i.e., 
\begin{equation}\label{Dir-BVP-p-WWW}
\left\{
\begin{array}{l}
u\in\big[{\mathcal{C}}^\infty(\mathbb{R}^{n}_{+})\big]^M,\quad Lu=0\,\,\mbox{ in }\,\,\mathbb{R}^{n}_{+},
\\[4pt]
\mathcal{N}_\kappa u\,\,\text{ belongs to the space }\,\,L^p_w(\mathbb{R}^{n-1}),
\\[6pt]
u\big|^{{}^{\kappa-{\rm n.t.}}}_{\partial{\mathbb{R}}^{n}_{+}}=f
\,\,\text{ at ${\mathcal{L}}^{n-1}$-a.e. point in }\,\,{\mathbb{R}}^{n-1},
\end{array}
\right.
\end{equation}
has a unique solution for each $f\in\big[L^p_w(\mathbb{R}^{n-1})\big]^M$, and the solution $u$ of \eqref{Dir-BVP-p} 
{\rm (}which continues to be given by \eqref{exist:u}{\rm )} satisfies 
\begin{equation}\label{eJHBbawvr-EW}
\|f\|_{[L^p_w(\mathbb{R}^{n-1})]^M}\leq
\|\mathcal{N}_\kappa u\|_{L^p_w(\mathbb{R}^{n-1})}\leq C\|f\|_{[L^p_w(\mathbb{R}^{n-1})]^M}.
\end{equation}
Similarly, since for the Lorentz spaces $L^{p,q}({\mathbb{R}}^{n-1})$ with $p\in(1,\infty)$,
$q\in(0,\infty]$, we also have (again, see \cite{K-MMMM})
\begin{align}\label{76tFfaf-7GF.abx.L}
\begin{array}{c}
L^{p,q}(\mathbb{R}^{n-1})\hookrightarrow L^1\big({\mathbb{R}}^{n-1}\,,\,\tfrac{dx'}{1+|x'|^{n-1}}\big)
\,\,\text{and}
\\[6pt]
{\mathcal{M}}:L^{p,q}({\mathbb{R}}^{n-1})\to L^{p,q}({\mathbb{R}}^{n-1})\,\,\text{ boundedly},
\end{array}
\end{align}
we also conclude that the version of the Dirichlet problem \eqref{Dir-BVP-p} naturally formulated in such a setting 
continues to be well-posed. To offer yet another example, recall the scale of Morrey spaces 
$\mathfrak{L}^{p,\lambda}({\mathbb{R}}^{n-1})$ in ${\mathbb{R}}^{n-1}$, 
defined for each $p\in(1,\infty)$ and $\lambda\in(0,n-1)$ according to 
\begin{equation}\label{MOR.1}
\mathfrak{L}^{p,\lambda}({\mathbb{R}}^{n-1}):=\Big\{f\in L^p_{\rm loc}({\mathbb{R}}^{n-1}):\,
\|f\|_{\mathfrak{L}^{p,\lambda}({\mathbb{R}}^{n-1})}<\infty\Big\}
\end{equation}
where
\begin{equation}\label{MOR.2}
\|f\|_{\mathfrak{L}^{p,\lambda}({\mathbb{R}}^{n-1})}:=
\sup_{x'\in{\mathbb{R}}^{n-1},\,r>0}\Big(r^{-\lambda}\int_{B_{n-1}(x',r)}|f|^p\,d{\mathcal{L}}^{n-1}\Big)^{1/p}.
\end{equation}
Given that 
\begin{align}\label{MOR.9}
\begin{array}{c}
\mathfrak{L}^{p,\lambda}({\mathbb{R}}^{n-1})\subset
L^1\big({\mathbb{R}}^{n-1}\,,\,\tfrac{dx'}{1+|x'|^{n-1}}\big)
\\[6pt]
\text{provided }\,\,1<p<\infty\,\,\text{ and }\,\,0<\lambda<n-1,
\end{array}
\end{align}
and since (cf., e.g., \cite{CF}) 
\begin{align}\label{MOR.11}
\parbox{8.0cm}{the Hardy-Littlewood operator ${\mathcal{M}}$ is bounded on 
$\mathfrak{L}^{p,\lambda}({\mathbb{R}}^{n-1})$ if $1<p<\infty$ and $0<\lambda<n-1$,}
\end{align}
we once again conclude that the version of the Dirichlet problem \eqref{Dir-BVP-p} 
naturally formulated in terms of Morrey spaces becomes well-posed. For more examples 
of this nature and further details the reader is referred to \cite{K-MMMM}.
\end{remark}

Later on, in Theorem~\ref{thm:Linfty}, we shall see that in fact the end-point $p=\infty$ is permissible 
in the context of Theorem~\ref{thm:Lp}; that is, the $L^\infty$-Dirichlet problem is well-posed. At the other 
end of the spectrum, i.e., for $p=1$, the very nature of \eqref{Dir-BVP-p} changes. Indeed, at least when 
$L=\Delta$, the Laplacian in ${\mathbb{R}}^n$, from \cite[Proposition~1, p.\,119]{Stein93} we know that 
for any harmonic function $u$ in $\mathbb{R}^{n}_{+}$ with $\mathcal{N}_\kappa u\in L^1(\mathbb{R}^{n-1})$ there 
exists $f\in H^1({\mathbb{R}}^{n-1})$ such that $u(x',t)=(P^\Delta_t\ast f)(x')$ for each $(x',t)\in{\mathbb{R}}^n_{+}$.
In concert with Theorem~\ref{thm:FP} and the observation that $H^1({\mathbb{R}}^{n-1})$ is a subspace of 
$L^1({\mathbb{R}}^{n-1})$, this implies that any harmonic function $u$ in $\mathbb{R}^{n}_{+}$ with 
$\mathcal{N}_\kappa u\in L^1(\mathbb{R}^{n-1})$ (for some $\kappa>0$) has a nontangential boundary trace 
$u\big|^{{}^{\kappa-{\rm n.t.}}}_{\partial{\mathbb{R}}^{n}_{+}}$ at ${\mathcal{L}}^{n-1}$-a.e. 
point in ${\mathbb{R}}^{n-1}$ which actually belongs to the Hardy space $H^1(\mathbb{R}^{n-1})$.
Thus, the boundary data are necessarily in a Hardy space in this case. This feature accounts for the 
manner in which we now formulate the following well-posedness result. 

\begin{theorem}\label{thm:Lp-H111}
Let $L$ be an $M\times M$ homogeneous constant complex coefficient elliptic second-order system in ${\mathbb{R}}^n$
and fix an aperture parameter $\kappa>0$. Then the $(H^1,L^1)$-Dirichlet boundary value problem for $L$ in 
$\mathbb{R}^{n}_{+}$, i.e., 
\begin{equation}\label{Dir-BVP-p.H111}
\left\{
\begin{array}{l}
u\in\big[{\mathcal{C}}^\infty(\mathbb{R}^{n}_{+})\big]^M,\quad Lu=0\,\,\mbox{ in }\,\,\mathbb{R}^{n}_{+},
\\[4pt]
\mathcal{N}_\kappa u\,\,\text{ belongs to the space }\,\,L^1(\mathbb{R}^{n-1}),
\\[6pt]
u\big|^{{}^{\kappa-{\rm n.t.}}}_{\partial{\mathbb{R}}^{n}_{+}}=f
\,\,\text{ at ${\mathcal{L}}^{n-1}$-a.e. point in }\,\,{\mathbb{R}}^{n-1},
\end{array}
\right.
\end{equation}
has a unique solution for each $f$ belonging to the Hardy space $\big[H^1(\mathbb{R}^{n-1})\big]^M$. 
In addition, the solution $u$ of \eqref{Dir-BVP-p.H111} is given by \eqref{exist:u} and satisfies 
\begin{equation}\label{eJHBbawvr.H111}
\|\mathcal{N}_\kappa u\|_{L^1(\mathbb{R}^{n-1})}\leq C\|f\|_{[H^1(\mathbb{R}^{n-1})]^M}
\end{equation}
for some constant $C\in(0,\infty)$ which depends only on $L$, $n$, and $\kappa$.
\end{theorem}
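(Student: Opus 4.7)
The plan is to reduce both existence and uniqueness to two tools already in place: the $H^1 \to L^1$ boundedness of the nontangential maximal operator applied to Poisson integrals, contained in item (e) of Theorem~\ref{thm:Poisson.II}, and the Fatou-type Poisson integral representation of Theorem~\ref{thm:FP}. No new machinery should be needed.

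\textbf{Existence.} Given $f \in [H^1(\mathbb{R}^{n-1})]^M$, I would first note the chain of inclusions
$H^1(\mathbb{R}^{n-1}) \hookrightarrow L^1(\mathbb{R}^{n-1}) \hookrightarrow L^1\bigl(\mathbb{R}^{n-1},\tfrac{dx'}{1+|x'|^n}\bigr)$, so that $f$ satisfies the integrability hypothesis \eqref{exist:f}. I then let $u := P^L_t \ast f$ as in \eqref{exist:u}. Item (c) of Theorem~\ref{thm:Poisson.II} immediately supplies $u \in [\mathcal{C}^\infty(\mathbb{R}^n_{+})]^M$, $Lu = 0$ in $\mathbb{R}^n_+$, and $u\big|^{\kappa-\mathrm{n.t.}}_{\partial \mathbb{R}^n_{+}} = f$ at $\mathcal{L}^{n-1}$-a.e.\ point of $\mathbb{R}^{n-1}$. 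Specializing the bound \eqref{exist:Nu-Mf-Hp} from item (e) to the admissible endpoint $p = 1$ then yields $\|\mathcal{N}_\kappa u\|_{L^1(\mathbb{R}^{n-1})} \leq C\,\|f\|_{[H^1(\mathbb{R}^{n-1})]^M}$, which in particular forces $\mathcal{N}_\kappa u \in L^1(\mathbb{R}^{n-1})$. Thus $u$ satisfies all three lines of \eqref{Dir-BVP-p.H111} and the desired estimate \eqref{eJHBbawvr.H111}.

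\textbf{Uniqueness.} Suppose $u$ is any solution of \eqref{Dir-BVP-p.H111}. The trivial bound $\tfrac{1}{1+|x'|^{n-1}} \leq 1$ gives
$\int_{\mathbb{R}^{n-1}} (\mathcal{N}_\kappa u)(x')\,\tfrac{dx'}{1+|x'|^{n-1}} \leq \|\mathcal{N}_\kappa u\|_{L^1(\mathbb{R}^{n-1})} < \infty$, so the hypothesis of Theorem~\ref{thm:FP} is fulfilled. Invoking that theorem, I conclude that $u$ admits the Poisson integral representation $u(x',t) = \bigl(P^L_t \ast (u\big|^{\kappa-\mathrm{n.t.}}_{\partial \mathbb{R}^n_{+}})\bigr)(x') = (P^L_t \ast f)(x')$ for every $(x',t) \in \mathbb{R}^n_+$, where in the last step I used the boundary condition in \eqref{Dir-BVP-p.H111}. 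This identifies $u$ with the solution constructed in the existence step.

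\textbf{Main obstacle.} The substantive analytic work has already been carried out in the proofs of Theorem~\ref{thm:Poisson.II}(e) (via the atomic/molecular analysis and the covering argument \eqref{eq:NBV1uj}--\eqref{eq:NBhf.2iii}) and of Theorem~\ref{thm:FP}, so the present proof is essentially an assembly. The only point meriting a remark is coherence of notation: for $p < 1$ the object $P^L_t \ast f$ in Theorem~\ref{thm:Poisson.II}(e) is defined through the duality pairing \eqref{exist:u-123}, whereas at $p = 1$ one wants to view it as the honest convolution \eqref{exist:u}. These two definitions agree because $H^1(\mathbb{R}^{n-1}) \subset L^1(\mathbb{R}^{n-1})$, so the BMO pairing in \eqref{exist:u-123} reduces to ordinary integration against the kernel $K^L(x'-\cdot,t)$; this reconciles the uses of items (c) and (e) in the existence step above.
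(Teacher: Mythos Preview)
Your proof is correct and follows precisely the route indicated in the paper: existence via item (e) of Theorem~\ref{thm:Poisson.II} (supplemented with item (c) for the boundary trace, since $H^1\subset L^1$), and uniqueness via Theorem~\ref{thm:FP}. Your additional remark reconciling the duality-based definition \eqref{exist:u-123} with the honest convolution \eqref{exist:u} at $p=1$ is a welcome clarification the paper leaves implicit.
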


Theorem~\ref{thm:Lp-H111} has been originally established in \cite{K-MMMM}, and the present work yields 
an alternative proof. Indeed, existence follows from item {\it (e)} of Theorem~\ref{thm:Poisson.II}, 
while uniqueness is implied by Theorem~\ref{thm:FP}. 

In relation to the work discussed so far in this section we wish to formulate several open questions. 
We start by formulating a question which asks for allowing more general operators in 
the statement of \cite[Proposition~1, p.\,119]{Stein93}.

\vskip 0.08in
{\bf Open Question~2.} 
{\it Let $L$ be an $M\times M$ homogeneous constant complex coefficient elliptic second-order system 
in ${\mathbb{R}}^n$. Suppose $0<p\leq\infty$ and fix some $\kappa>0$. Also, consider a function 
$u\in\big[\mathcal{C}^\infty(\mathbb{R}^n_{+})\big]^M$ satisfying $Lu=0$ in $\mathbb{R}_{+}^n$. 
Show that $\mathcal{N}_\kappa u\in L^p(\mathbb{R}^{n-1})$ if and only if there exists 
$f\in\big[H^p(\mathbb{R}^{n-1})\big]^M$ such that $u(x',t)=(P^L_t\ast f)(x')$ for each $(x',t)\in{\mathbb{R}}^n_{+}$.
Moreover, show that $\big\|\mathcal{N}_\kappa u\big\|_{L^p(\mathbb{R}^{n-1})}\approx\|f\|_{[H^p(\mathbb{R}^{n-1})]^M}$.}
\vskip 0.08in

Theorem~\ref{thm:Lp} provides an answer to this question in the range $p\in(1,\infty)$, while Theorem~\ref{thm:Linfty}
(discussed later on) addresses the case $p=\infty$. Also, item {\it (e)} of Theorem~\ref{thm:Poisson.II} is directly 
relevant to the issue at hand in the range $p\in\big(\tfrac{n-1}{n}\,,\,1\big]$. 

Our next question asks for allowing more general operators in the formulation of 
\cite[Theorem~4.23, p.\,190]{GCRF85}.

\vskip 0.08in
{\bf Open Question~3.} 
{\it Let $L$ be an $M\times M$ homogeneous constant complex coefficient elliptic second-order system 
in ${\mathbb{R}}^n$. Suppose $0<p\leq\infty$ and fix some $\kappa>0$. Also, consider a function 
$u\in\big[\mathcal{C}^\infty(\mathbb{R}^n_{+})\big]^M$ satisfying $Lu=0$ in $\mathbb{R}_{+}^n$ and
$\mathcal{N}_\kappa u\in L^p(\mathbb{R}^{n-1})$. Show that $f:=\lim\limits_{t\to 0^{+}}u(\cdot,t)$ exists 
in the sense of tempered distributions in ${\mathbb{R}}^{n-1}$, i.e., in $\big[{\mathcal{S}}'({\mathbb{R}}^{n-1})\big]^M$.}  
\vskip 0.08in

The following question pertains to the well-posedness of a brand of Dirichlet problem in which the boundary trace
is taken in a weak, distributional sense. 

\vskip 0.08in
{\bf Open Question~4.} 
{\it Let $L$ be an $M\times M$ homogeneous constant complex coefficient elliptic second-order system 
in ${\mathbb{R}}^n$. Suppose $0<p\leq\infty$ and fix some $\kappa>0$. Show that for each 
$f\in\big[H^p({\mathbb{R}}^{n-1})\big]^M$ the following boundary value problem is uniquely 
solvable and a naturally accompanying estimate holds:}
\begin{equation}\label{Dir-BVP-p.hphp}
\left\{
\begin{array}{l}
u\in\big[{\mathcal{C}}^\infty(\mathbb{R}^{n}_{+})\big]^M,\quad
Lu=0\,\,\mbox{ in }\,\,\mathbb{R}^{n}_{+},
\\[4pt]
\mathcal{N}_\kappa u\,\,\text{ belongs to the space }\,\,L^p(\mathbb{R}^{n-1}),
\\[6pt]
\lim\limits_{t\to 0^{+}}u(\cdot,t)=f\,\,\text{ in }\,\,\big[{\mathcal{S}}'({\mathbb{R}}^{n-1})\big]^M.
\end{array}
\right.
\end{equation}
Our earlier work shows that \eqref{Dir-BVP-p.hphp} is indeed well-posed if $p\in[1,\infty]$. 

The question below has to do with the solvability of the so-called Regularity problem.
This is a brand of Dirichlet problem in which the boundary data is selected from Sobolev 
spaces ($L^p$-based, of order one) and, as a result, stronger regularity is demanded of the solution. 

\vskip 0.08in
{\bf Open Question~5.} 
{\it Let $L$ be an $M\times M$ homogeneous constant complex coefficient elliptic second-order system 
in ${\mathbb{R}}^n$. Fix an integrability exponent $p\in(1,\infty)$ along with an aperture parameter 
$\kappa>0$. Also, pick an arbitrary $f$ in the Sobolev space $\big[W^{1,p}({\mathbb{R}}^{n-1})\big]^M$.
Find additional conditions, either on the system $L$ or the boundary datum $f$, guaranteeing that
the Regularity problem formulated as} 
\begin{equation}\label{Dir-REG-p.}
\left\{
\begin{array}{l}
u\in\big[{\mathcal{C}}^\infty(\mathbb{R}^{n}_{+})\big]^M,\quad
Lu=0\,\,\mbox{ in }\,\,\mathbb{R}^{n}_{+},
\\[4pt]
\mathcal{N}_\kappa u,\,\mathcal{N}_\kappa(\nabla u)\,\,\text{ belong to }\,\,L^p(\mathbb{R}^{n-1}),
\\[6pt]
u\big|^{{}^{\kappa-{\rm n.t.}}}_{\partial{\mathbb{R}}^n_{+}}=f
\,\,\text{ at ${\mathcal{L}}^{n-1}$-.a.e. point in }\,\,{\mathbb{R}}^{n-1},
\end{array}
\right.
\end{equation}
{\it is solvable and a naturally accompanying estimate holds.}
\vskip 0.08in

Work relevant to this question may be found in \cite{H-MMMM} where a large class of systems $L$,  
including scalar operators (such as the Laplacian) as well as the Lam\'e system \eqref{TYd-YG-76g}, has been
identified with the property that the Regularity problem \eqref{Dir-REG-p.} is uniquely solvable for each 
$f\in\big[W^{1,p}({\mathbb{R}}^{n-1})\big]^M$ with $1<p<\infty$. Also, in \cite{S-MMMM} the following 
link between the solvability of the Regularity problem \eqref{Dir-REG-p.}, and the domain of the infinitesimal 
generator of the $C_0$-semigroup $T=\{T(t)\}_{t\geq 0}$ associated with $L$ as in \eqref{eq:Taghb8}, 
has been established.

\begin{theorem}\label{V-Naa.11}
Let $L$ be an $M\times M$ homogeneous constant complex coefficient elliptic second-order system 
in ${\mathbb{R}}^n$. Fix some $p\in(1,\infty)$ and consider the $C_0$-semigroup $T=\{T(t)\}_{t\geq 0}$ 
on $\big[L^p({\mathbb{R}}^{n-1})\big]^M$ associated with $L$ as in \eqref{eq:Taghb8}. Denote by ${\mathbf{A}}$ 
the infinitesimal generator of $T$, with domain $D({\mathbf{A}})$. Then $D({\mathbf{A}})$ is a dense linear 
subspace of $\big[W^{1,p}({\mathbb{R}}^{n-1})\big]^M$ and, in fact,
\begin{align}\label{eq:tfc.1-new}
D({\mathbf{A}})=\big\{f\in\big[W^{1,p}({\mathbb{R}}^{n-1})\big]^M:\,&
\text{the problem \eqref{Dir-REG-p.} with} 
\nonumber\\[-2pt]
&\text{boundary datum $f$ is solvable}\big\}.
\end{align}
In particular, $D({\mathbf{A}})=\big[W^{1,p}({\mathbb{R}}^{n-1})\big]^M$ if and only if 
the Regularity problem \eqref{Dir-REG-p.} is solvable for arbitrary data $f\in\big[W^{1,p}({\mathbb{R}}^{n-1})\big]^M$.
\end{theorem}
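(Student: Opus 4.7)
The plan is to characterize $D(\mathbf{A})$ concretely by realizing $T(t)f=P^L_t\ast f$ as the unique $[L^p]^M$-Dirichlet solution from Theorem~\ref{thm:Lp} with boundary datum $f$, and then matching the $L^p$ differentiability of $t\mapsto T(t)f$ at $t=0^{+}$ with the solvability of the Regularity problem \eqref{Dir-REG-p.}. For each $f\in[L^p(\mathbb{R}^{n-1})]^M$, set $u(\cdot,t):=T(t)f=P^L_t\ast f$. Theorem~\ref{thm:Lp} gives $\mathcal{N}_\kappa u\in L^p(\mathbb{R}^{n-1})$ and $u\big|^{{}^{\kappa-{\rm n.t.}}}_{\partial{\mathbb{R}}^{n}_{+}}=f$, while standard $C_0$-semigroup calculus identifies $\partial_t u(\cdot,t)=T(t)\mathbf{A}f$ whenever $f\in D(\mathbf{A})$.

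\emph{Forward inclusion.} Fix $f\in D(\mathbf{A})$ and set $g:=\mathbf{A}f\in[L^p]^M$. Then $\partial_t u=T(\cdot)g$ is a null-solution of $L$ satisfying $\mathcal{N}_\kappa(\partial_t u)\in L^p$, by Theorem~\ref{thm:Lp} applied with datum $g$; this controls the vertical component of $\nabla u$. For the tangential gradient, write $\nabla_{x'}u(\cdot,t)=(\nabla_{x'}P^L_t)\ast f$ and exploit the homogeneity $K^L(\lambda x)=\lambda^{1-n}K^L(x)$ together with the molecular bounds \eqref{RWWQD-bb}--\eqref{RWWQD-cc.222} in item~(d) of Theorem~\ref{thm:Poisson.II}. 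These identify $\{f\mapsto\nabla_{x'}(P^L_t\ast f)\}$ as a matrix-valued singular-integral family whose $t\to 0^+$ limit defines a first-order operator comparable in $[L^p]^M$-norm to $\mathbf{A}$. This simultaneously forces $\nabla_{x'}f\in[L^p]^M$ (so $f\in[W^{1,p}]^M$) and yields $\mathcal{N}_\kappa(\nabla u)\in L^p$, certifying that $u$ solves \eqref{Dir-REG-p.}.

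\emph{Reverse inclusion.} Suppose $f\in[W^{1,p}]^M$ is such that \eqref{Dir-REG-p.} has a solution $u$. By the uniqueness part of Theorem~\ref{thm:Lp}, $u=T(\cdot)f$. Since $\mathcal{N}_\kappa(\nabla u)\in L^p$, property \eqref{nkc-EE-4} ensures that $g:=\partial_t u\big|^{{}^{\kappa-{\rm n.t.}}}_{\partial{\mathbb{R}}^{n}_{+}}$ exists and lies in $[L^p]^M$; applying Theorem~\ref{thm:Lp} to the null-solution $\partial_t u$ with datum $g$ produces $\partial_t u(\cdot,t)=T(t)g$ for every $t>0$. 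Integrating this identity in $t$ and using the $[L^p]^M$-continuity $T(t)f\to f$ as $t\to 0^{+}$, one obtains $T(t)f-f=\int_{0}^{t}T(s)g\,ds$, hence
\[
\frac{T(t)f-f}{t}=\frac{1}{t}\int_{0}^{t}T(s)g\,ds\longrightarrow g\quad\text{in }[L^p]^M\text{ as }t\to 0^{+},
\]
by the strong continuity of $T$ at $s=0$. Therefore $f\in D(\mathbf{A})$ with $\mathbf{A}f=g$, completing the characterization \eqref{eq:tfc.1-new}.

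\emph{Density and main obstacle.} Density of $D(\mathbf{A})$ in $[L^p(\mathbb{R}^{n-1})]^M$ is automatic for any $C_0$-semigroup; density in $[W^{1,p}(\mathbb{R}^{n-1})]^M$ is reached by verifying $[\mathcal{S}(\mathbb{R}^{n-1})]^M\subseteq D(\mathbf{A})$, which reduces, via the decay in \eqref{eq:Kest}, to checking that Schwartz data generate classical solutions of \eqref{Dir-REG-p.}. The ``in particular'' assertion is then immediate from the equality \eqref{eq:tfc.1-new}. The principal obstacle is the tangential-regularity step inside the forward inclusion: upgrading the abstract $L^p$-differentiability of $t\mapsto T(t)f$ at $0^{+}$ to $\nabla_{x'}f\in[L^p]^M$. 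In the scalar harmonic case this reduces to recognizing the generator as $-\sqrt{-\Delta_{n-1}}$, a first-order Fourier multiplier whose domain is exactly $[W^{1,p}]^M$; for general systems the matrix-valued symbol of $\mathbf{A}$ must be analyzed directly through the molecular structure provided by item~(d) of Theorem~\ref{thm:Poisson.II}, where all the harmonic-analytic weight of the argument is concentrated.
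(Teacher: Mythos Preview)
The paper does not actually prove Theorem~\ref{V-Naa.11}; it only quotes the result from \cite{S-MMMM}. So there is no in-paper argument to compare against, and I assess your proposal on its own merits.

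Your reverse inclusion is essentially correct, with one mis-citation: to obtain the existence of $g:=\partial_t u\big|^{{}^{\kappa-{\rm n.t.}}}_{\partial\mathbb{R}^{n}_{+}}$ in $[L^p]^M$ and the identity $\partial_t u(\cdot,t)=T(t)g$, you must invoke the Fatou-type result in Theorem~\ref{TFac-gR} applied to the null-solution $\partial_t u$ (which has $\mathcal{N}_\kappa(\partial_t u)\in L^p$), not \eqref{nkc-EE-4}, which merely bounds a trace once it is already known to exist. With that fix the integration argument goes through.

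The forward inclusion, however, has a genuine gap which you yourself label the ``principal obstacle'' but do not close. From $f\in D(\mathbf{A})$ you legitimately get $\mathcal{N}_\kappa(\partial_t u)\in L^p$ via $\partial_t u=T(\cdot)\mathbf{A}f$. What is missing is any argument that $\nabla_{x'}f\in[L^p]^M$. The appeal to item~(d) of Theorem~\ref{thm:Poisson.II} does not do this job: those molecular bounds place $t^{|\alpha|-(n-1)(1/p-1)}\partial^\alpha K^L(\cdot,t)$ uniformly in $H^p$, which is tailored to dualizing against H\"older/${\rm BMO}$ data (as in the proof of Theorem~\ref{theor:Holder}), not to comparing $\|\nabla_{x'}f\|_{[L^p]^M}$ with $\|\mathbf{A}f\|_{[L^p]^M}$ for $1<p<\infty$. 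Note also that a genuine two-sided $L^p$-comparability between $\nabla_{x'}f$ and $\mathbf{A}f$, as your wording suggests, would force $D(\mathbf{A})=[W^{1,p}]^M$ for \emph{every} admissible system $L$, directly contradicting the ``if and only if'' in the last sentence of the theorem and trivializing Open~Question~5. What is actually required here is a one-sided bound $\|\nabla_{x'}f\|_{[L^p]^M}\leq C\big(\|\mathbf{A}f\|_{[L^p]^M}+\|f\|_{[L^p]^M}\big)$, and that hinges on identifying the matrix symbol of $\mathbf{A}$ (homogeneous of degree one, invertible on $S^{n-2}$ by ellipticity) and checking that each $\xi_j$ times its inverse is an $L^p$-Fourier multiplier via Mikhlin's theorem. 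Once $f\in[W^{1,p}]^M$ is secured this way, the tangential part $\mathcal{N}_\kappa(\nabla_{x'}u)\in L^p$ is immediate from $\partial_j u=P^L_t\ast\partial_j f$ and \eqref{exist:Nu-Mf}; your sketch skips precisely the multiplier step that carries the weight.
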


Moving on, we present a Fatou-type theorem from \cite{SCGC} which refines work in 
\cite[Theorem~6.1]{K-MMMM} and \cite[Corollary~6.3]{K-MMMM}.

\begin{theorem}\label{TFac-gR}
Let $L$ be an $M\times M$ homogeneous constant complex coefficient elliptic second-order system 
in ${\mathbb{R}}^n$. Assume that $u\in\big[\mathcal{C}^\infty(\mathbb{R}^n_{+})\big]^M$ satisfies 
$Lu=0$ in $\mathbb{R}_{+}^n$. If $\mathcal{N}_\kappa u\in L^p(\mathbb{R}^{n-1})$ for some $p\in[1,\infty]$
and $\kappa>0$, then
\begin{align}\label{Tafva.222fawcr}
\begin{array}{l}
\big(u\big|^{{}^{\kappa-{\rm n.t.}}}_{\partial{\mathbb{R}}^n_{+}}\big)(x')
\,\,\text{ exists at ${\mathcal{L}}^{n-1}$-a.e. point }\,\,x'\in{\mathbb{R}}^{n-1},
\\[10pt]
\displaystyle
u\big|^{{}^{\kappa-{\rm n.t.}}}_{\partial{\mathbb{R}}^n_{+}}\,\,\text{ belongs to }\,\,
\big[L^p({\mathbb{R}}^{n-1})\big]^M\subset\Big[L^1\Big({\mathbb{R}}^{n-1}\,,\,\frac{dx'}{1+|x'|^{n}}\Big)\Big]^M,
\\[12pt]
u(x',t)=\Big(P^L_t\ast\big(u\big|^{{}^{\kappa-{\rm n.t.}}}_{\partial{\mathbb{R}}^n_{+}}\big)\Big)(x')
\,\,\text{ for each point }\,\,(x',t)\in{\mathbb{R}}^n_{+}.
\end{array}
\end{align}
\end{theorem}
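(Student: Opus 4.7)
The plan is to split into two cases according to whether $p$ is finite or $p=\infty$, and in each case reduce to one of the Fatou-type theorems already proved (Theorem~\ref{thm:FP} or Theorem~\ref{thm:Fatou}). The bulk of the work is a clean verification of hypotheses; the main subtlety will be upgrading the membership of the nontangential trace from a weighted $L^1$ class to the full space $L^p$.

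\textbf{Case $1\leq p<\infty$.} I would first observe that the hypothesis $\mathcal{N}_\kappa u\in L^p(\mathbb{R}^{n-1})$ combined with H\"older's inequality yields
\begin{equation*}
\int_{\mathbb{R}^{n-1}}\big(\mathcal{N}_\kappa u\big)(x')\,\frac{dx'}{1+|x'|^{n-1}}
\leq\|\mathcal{N}_\kappa u\|_{L^p(\mathbb{R}^{n-1})}\Big\|(1+|\cdot|^{n-1})^{-1}\Big\|_{L^{p'}(\mathbb{R}^{n-1})}<\infty,
\end{equation*}
where the dual weight belongs to $L^{p'}$ thanks to the elementary fact that $\int_{\mathbb{R}^{n-1}}(1+|x'|^{n-1})^{-p'}\,dx'<\infty$ for every $p'\in(1,\infty]$ (i.e., for every $p\in[1,\infty)$). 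This places $u$ under the hypotheses \eqref{jk-lm-jhR-LLL-HM-RN.w} of Theorem~\ref{thm:FP}, which directly yields the a.e. existence of $u\big|^{\kappa\text{-n.t.}}_{\partial\mathbb{R}^n_{+}}$ and the Poisson integral representation in the last line of \eqref{Tafva.222fawcr}.

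\textbf{Promotion of the trace to $L^p$.} The middle line of \eqref{Tafva.222fawcr} does not follow directly from Theorem~\ref{thm:FP} (which only gives membership to the weighted $L^1$ space). I would invoke the Pointwise Maximum Principle \eqref{Taf-UHN}, namely
\begin{equation*}
\Big|u\big|^{\kappa\text{-n.t.}}_{\partial\mathbb{R}^n_{+}}\Big|\leq\mathcal{N}_\kappa u
\quad\text{at ${\mathcal{L}}^{n-1}$-a.e. point in ${\mathbb{R}}^{n-1}$,}
\end{equation*}
which, together with the hypothesis $\mathcal{N}_\kappa u\in L^p(\mathbb{R}^{n-1})$, immediately upgrades the trace to an element of $[L^p(\mathbb{R}^{n-1})]^M$. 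The embedding $L^p(\mathbb{R}^{n-1})\subset L^1(\mathbb{R}^{n-1},dx'/(1+|x'|^n))$ recorded in the statement is then automatic by the same H\"older argument.

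\textbf{Case $p=\infty$.} Here the boundedness of $\mathcal{N}_\kappa u$ together with identity \eqref{6543} gives $\|u\|_{[L^\infty(\mathbb{R}^n_{+})]^M}=\|\mathcal{N}_\kappa u\|_{L^\infty(\mathbb{R}^{n-1})}<\infty$. I would then verify the two hypotheses of Theorem~\ref{thm:Fatou}: the subcritical growth condition \eqref{subcritical:mild} follows at once since, for each fixed $\varepsilon>0$,
\begin{equation*}
\|u\|_{*,\varepsilon,\rho}\leq\rho^{-1}\|u\|_{[L^\infty(\mathbb{R}^n_{+})]^M}\longrightarrow 0\,\,\text{ as }\,\,\rho\to\infty,
\end{equation*}
while the finiteness integral condition \eqref{u-integ} holds because
$\sup_{0<t<\varepsilon_0}|u(x',t)|\leq\|u\|_{[L^\infty(\mathbb{R}^n_{+})]^M}$ and the weight $(1+|x'|^n)^{-1}$ is integrable over $\mathbb{R}^{n-1}$. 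Consequently, Theorem~\ref{thm:Fatou} delivers the a.e.\ existence of the trace together with the Poisson integral representation, and the $L^\infty$ membership of the trace is again read off from the Pointwise Maximum Principle \eqref{Taf-UHN.ER.2} (or directly from \eqref{nkc-EE-4}). The only genuinely substantive piece of the argument is thus the verification of the correct integrability/subcritical-growth hypothesis in each range of $p$; both reduce to a one-line weighted H\"older estimate, so I do not anticipate a serious obstacle.
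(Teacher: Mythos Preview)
Your argument is correct and matches the paper's approach almost exactly: for $p\in[1,\infty)$ both you and the paper reduce to Theorem~\ref{thm:FP} via the embedding $L^p(\mathbb{R}^{n-1})\hookrightarrow L^1\big(\mathbb{R}^{n-1},\tfrac{dx'}{1+|x'|^{n-1}}\big)$ and then read off the $L^p$-membership of the trace from \eqref{nkc-EE-4}. The only cosmetic difference is at $p=\infty$: the paper invokes Corollary~\ref{thm:Fatou-SCG} (verifying the Dini condition \eqref{41fgfgv} for bounded $u$), whereas you go one level deeper and verify the hypotheses \eqref{subcritical:mild}--\eqref{u-integ} of Theorem~\ref{thm:Fatou} directly---since Corollary~\ref{thm:Fatou-SCG} is itself derived from Theorem~\ref{thm:Fatou}, the two routes are equivalent.
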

 
In view of \eqref{76tFfaf-7GF.abx}, when $p\in[1,\infty)$ all claims are direct consequences 
of Theorem~\ref{thm:FP} (also bearing \eqref{nkc-EE-4} in mind). The result corresponding to 
the end-point $p=\infty$ is no longer implied by Theorem~\ref{thm:FP} as the finiteness condition 
in the second line of \eqref{jk-lm-jhR-LLL-HM-RN.w} fails in general for bounded functions
(the best one can say in such a scenario is that $\mathcal{N}_\kappa u\in L^\infty(\mathbb{R}^{n-1})$).
Nonetheless, Corollary~\ref{thm:Fatou-SCG} applies and all desired conclusions now follow from this. 

As a corollary, we note that, given any aperture parameter $\kappa>0$ along with an integrability 
exponent $p\in(1,\infty]$, Theorem~\ref{TFac-gR} implies (together with \eqref{nkc-EE-4}, \eqref{exist:Nu-Mf},
and \eqref{76tFfaf-7GF.aby}) the following $L^p$-styled Maximum Principle:
\begin{align}\label{Ta-jy6GGa-yT}
\begin{array}{c}
\big\|u\big|^{{}^{\kappa-{\rm n.t.}}}_{\partial{\mathbb{R}}^n_{+}}\big\|_{[L^p({\mathbb{R}}^{n-1})]^M}
\approx\big\|{\mathcal{N}}_\kappa u\big\|_{L^p({\mathbb{R}}^{n-1})}\,\,\text{ uniformly in} 
\\[6pt]
\text{the class of functions $u\in\big[\mathcal{C}^\infty(\mathbb{R}^n_{+})\big]^M$ satisfying} 
\\[6pt]
\text{$Lu=0$ in $\mathbb{R}_{+}^n$ as well as $\mathcal{N}_\kappa u\in L^p(\mathbb{R}^{n-1})$.}
\end{array}
\end{align}

Theorem~\ref{TFac-gR} is sharp, in the sense that the corresponding result fails for 
$p\in\big(\tfrac{n-1}{n}\,,\,1\big)$. To see that this is the case, fix some vector 
$a\in{\mathbb{C}}^M\setminus\{0\}$
along with some point $z'\in{\mathbb{R}}^{n-1}\setminus\{0'\}$ and consider the function 
\begin{equation}\label{FCT-TR.nnn}
u_\star(x',t):=K^L(x',t)a-K^L(x'-z',t)a\,\,\text{ for each }\,\,(x',t)\in\mathbb{R}^n_{+}.
\end{equation}
Then $u_\star$ belongs to the space
$\big[\mathcal{C}^{\infty}(\overline{\mathbb{R}^n_{+}}\setminus\{(0',0),(z',0)\})\big]^M$, 
satisfies $Lu_\star=0$ in $\mathbb{R}_{+}^n$, and 
$\Big(u_\star\big|^{{}^{\kappa-{\rm n.t.}}}_{\partial{\mathbb{R}}^{n}_{+}}\Big)(x')=0$ 
for every aperture parameter $\kappa>0$ and every point 
$x'\in{\mathbb{R}}^{n-1}\setminus\{0',z'\}$. In addition, 
we may choose $a,z'$ such that $u_\star$ is not identically zero 
(otherwise this would force $K^L(x',t)$ to be independent of $x'$, 
a happenstance precluded by, e.g., \eqref{eq:Kest} and \eqref{eq:IG6gy.2}-\eqref{eq:Gvav7g5}).
Hence, on the one hand, the Poisson integral representation formula in the last line of 
\eqref{Tafva.222fawcr} presently fails. On the other hand, from the well-known fact that 
\begin{equation}\label{Ka-jGG.1}
\delta_{0'}-\delta_{z'}\in H^p({\mathbb{R}}^{n-1})\,\,\text{ for each }\,\,p\in\big(\tfrac{n-1}{n}\,,\,1\big),
\end{equation}
it follows that 
\begin{equation}\label{Ka-jGG.2}
f:=(\delta_{0'}-\delta_{z'})a\in\big[H^p({\mathbb{R}}^{n-1})\big]^M
\,\,\text{ for each }\,\,p\in\big(\tfrac{n-1}{n}\,,\,1\big).
\end{equation}
Moreover, $f$ is related to the function $u$ from \eqref{FCT-TR.nnn} via 
$u_\star(x',t)=(P^L_t\ast f)(x')$ at each point $(x',t)\in{\mathbb{R}}^n_{+}$, with the convolution 
understood as in \eqref{exist:u-123}. As such, \eqref{exist:Nu-Mf-Hp} implies that for each 
aperture parameter $\kappa>0$ we have 
\begin{equation}\label{exist:Nu-Mf-Hp.iii}
\mathcal{N}_\kappa u_\star\in L^p(\mathbb{R}^{n-1})
\,\,\text{ for each }\,\,p\in\big(\tfrac{n-1}{n}\,,\,1\big).
\end{equation}
Parenthetically, we wish to pint out that the membership in \eqref{exist:Nu-Mf-Hp.iii} may also be 
justified directly based on \eqref{FCT-TR.nnn} and the estimates for the kernel function $K^L$ from 
item {\it (a)} in Theorem~\ref{thm:Poisson.II} which, collectively, show that  
\begin{equation}\label{eq:IG6gy.2-LaIP}
\parbox{10.60cm}{for $x'\in{\mathbb{R}}^{n-1}$, the nontangential maximal function 
$\big(\mathcal{N}_\kappa u_\star\big)(x')$ behaves like $|x'|^{1-n}$ if $x'$ is near $0'$, 
like $|x'-z'|^{1-n}$ if $x'$ is near $z'$, like $|x'|^{-n}$ if $x'$ is near infinity, 
and is otherwise bounded.}
\end{equation}
Granted this, it follows that $\mathcal{N}_\kappa u_\star\in L^p(\mathbb{R}^{n-1})$ if and only if 
$p(n-1)<n-1$ and $pn>n-1$, a set of conditions equivalent to $p\in\big(\tfrac{n-1}{n}\,,\,1\big)$.

To summarize, the function $u_\star$ defined in \eqref{FCT-TR.nnn} satisfies, for each aperture parameter $\kappa>0$,
\begin{equation}\label{rt-vba-jg}
\left\{
\begin{array}{l}
u_\star\in\big[{\mathcal{C}}^{\infty}({\mathbb{R}}^n_{+})\big]^M,\quad Lu_\star=0\,\,\text{ in }\,\,{\mathbb{R}}^n_{+},
\\[6pt]
{\mathcal{N}}_{\kappa}u_\star\in L^p(\mathbb{R}^{n-1})\,\,\text{ for each }\,\,p\in\big(\tfrac{n-1}{n}\,,\,1\big),
\\[8pt]
u_\star\big|^{{}^{\kappa-{\rm n.t.}}}_{\partial{\mathbb{R}}^{n}_{+}}=0
\,\,\text{ at ${\mathcal{L}}^{n-1}$-a.e. point in }\,\,{\mathbb{R}}^{n-1},
\\[8pt]
\text{the function $u_\star$ is not identically zero in ${\mathbb{R}}^n_{+}$.}
\end{array}
\right.
\end{equation}
This is in sharp contrast to Theorem~\ref{TFac-gR}, and points to the fact that when $p<1$ 
the pointwise nontangential boundary trace of a null-solution of the system $L$ no longer 
characterizes the original function.

\subsection{The subcritical growth Dirichlet problem}\label{section:SCG-BVP}

Recall that ${\rm SCG}(\mathbb{R}^{n-1})$ stands for the class of functions 
exhibiting subcritical growth in $\mathbb{R}^{n-1}$, defined as in \eqref{SCG-f}.
In relation to this class, we have the following well-posedness result from \cite{SCGC}. 

\begin{theorem}\label{thm:SCG}
Let $L$ be an $M\times M$ homogeneous constant complex coefficient elliptic second-order system in 
${\mathbb{R}}^n$. Also, fix an aperture parameter $\kappa>0$. Then the subcritical growth Dirichlet 
boundary value problem for $L$ in $\mathbb{R}^{n}_{+}$, formulated as
\begin{equation}\label{Dir-BVP-SCG}
\left\{
\begin{array}{l}
u\in\big[{\mathcal{C}}^\infty(\mathbb{R}^{n}_{+})\big]^M,\quad Lu=0\,\,\mbox{ in }\,\,\mathbb{R}^{n}_{+},
\\[4pt]
\lim\limits_{\rho\to\infty}\|u\|_{*,\rho}=0,
\\[8pt]
u\big|^{{}^{\kappa-{\rm n.t.}}}_{\partial{\mathbb{R}}^{n}_{+}}=f
\,\,\text{ at ${\mathcal{L}}^{n-1}$-a.e. point in }\,\,{\mathbb{R}}^{n-1},
\end{array}
\right.
\end{equation}
has a unique solution for each $f\in\big[{\rm SCG}(\mathbb{R}^{n-1})\big]^M$. 
Moreover, the solution $u$ of \eqref{Dir-BVP-SCG} is given by \eqref{exist:u} and satisfies 
the following Weak Local Maximum Principle: 
\begin{equation}\label{q43t3g}
\sup_{B(0,\rho)\cap\mathbb{R}^n_{+}}|u|\leq C\,\Bigg(\|f\|_{[L^\infty(B_{n-1}(0',2\rho))]^M}
+\int_{\mathbb{R}^{n-1}\setminus B_{n-1}(0',2\rho)}\frac{\rho|f(y')|}{\rho^n+|y'|^n}\,dy'\Bigg)
\end{equation}
for each $\rho\in(0,\infty)$, where $C\in[1,\infty)$ depends only on $L$ and $n$.
\end{theorem}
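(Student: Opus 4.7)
The plan is to produce the solution by the Poisson integral $u(x',t):=(P^L_t\ast f)(x')$, then verify that this $u$ solves \eqref{Dir-BVP-SCG}, satisfies the Weak Local Maximum Principle \eqref{q43t3g}, and is the only such solution. The starting observation is that any $f\in[\mathrm{SCG}(\mathbb{R}^{n-1})]^M$ automatically satisfies the weighted integrability condition \eqref{exist:f}, so item $(c)$ of Theorem~\ref{thm:Poisson.II} immediately delivers that $u\in[\mathcal{C}^\infty(\mathbb{R}^n_+)]^M$, that $Lu=0$ in $\mathbb{R}^n_+$, and that $u\big|^{{}^{\kappa-\mathrm{n.t.}}}_{\partial\mathbb{R}^n_+}=f$ at ${\mathcal{L}}^{n-1}$-a.e.\ point of $\mathbb{R}^{n-1}$. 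What remains is the growth condition $\lim_{\rho\to\infty}\|u\|_{*,\rho}=0$, which I will obtain as a consequence of \eqref{q43t3g}. For uniqueness, if $u_1,u_2$ both solve \eqref{Dir-BVP-SCG} with the same datum, then $v:=u_1-u_2$ has $Lv=0$, nontangential trace zero a.e., and $\lim_{\rho\to\infty}\|v\|_{*,\rho}=0$ (in particular $\liminf=0$), hence $v\equiv 0$ by Theorem~\ref{thm:uniq-subcritical}.

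To establish \eqref{q43t3g}, fix $\rho\in(0,\infty)$ and split the convolution as
\begin{equation*}
u(x',t)=\int_{B_{n-1}(0',2\rho)}K^L(x'-y',t)f(y')\,dy'+\int_{\mathbb{R}^{n-1}\setminus B_{n-1}(0',2\rho)}K^L(x'-y',t)f(y')\,dy'.
\end{equation*}
For the local piece, the pointwise bound \eqref{Uddcv} together with the change of variable $y'\mapsto x'-y'$ yields $\int_{\mathbb{R}^{n-1}}|K^L(x'-y',t)|\,dy'\leq C$ uniformly in $(x',t)\in\mathbb{R}^n_+$, so this piece is dominated by $C\|f\|_{[L^\infty(B_{n-1}(0',2\rho))]^M}$. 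For the far-field piece, the condition $(x',t)\in B(0,\rho)\cap\mathbb{R}^n_+$ and $|y'|>2\rho$ force $|x'-y'|\geq|y'|-|x'|\geq|y'|/2$ and $t\leq\rho<|y'|/2$, so \eqref{Uddcv} gives $|K^L(x'-y',t)|\leq Ct(t+|x'-y'|)^{-n}\leq C\rho|y'|^{-n}\leq C'\rho(\rho^n+|y'|^n)^{-1}$, using $|y'|^n>\rho^n$. Summing the two contributions and taking the supremum over $B(0,\rho)\cap\mathbb{R}^n_+$ yields \eqref{q43t3g}.

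It then remains to verify the growth condition $\lim_{\rho\to\infty}\|u\|_{*,\rho}=0$. Multiplying \eqref{q43t3g} by $\rho^{-1}$ produces two terms. The first is $C\rho^{-1}\|f\|_{[L^\infty(B_{n-1}(0',2\rho))]^M}$, which tends to zero by the very definition of $\mathrm{SCG}(\mathbb{R}^{n-1})$ in \eqref{SCG-f} (applied along the sequence $2\rho\to\infty$). The second is $\int_{|y'|>2\rho}|f(y')|(\rho^n+|y'|^n)^{-1}\,dy'$; for $\rho\geq 1$ one has $\rho^n+|y'|^n\geq 1+|y'|^n$, so this integrand is majorized by the ${\mathcal{L}}^{n-1}$-integrable function $|f(y')|/(1+|y'|^n)$ (integrable by the first half of \eqref{SCG-f}), while the integrand itself tends to zero pointwise and the domain of integration shrinks to the empty set. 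Hence Lebesgue's Dominated Convergence Theorem forces this contribution to zero as well, completing the verification of $\lim_{\rho\to\infty}\|u\|_{*,\rho}=0$ and finishing the argument. The most delicate point is arranging the two kernel bounds in precisely the form dictated by \eqref{q43t3g}, so that the SCG hypothesis on $f$ can be unlocked via both halves of \eqref{SCG-f} when passing to the limit $\rho\to\infty$.
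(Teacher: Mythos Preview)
Your proof is correct and follows essentially the same approach as the paper: existence via the Poisson integral using item~(c) of Theorem~\ref{thm:Poisson.II}, and uniqueness via Theorem~\ref{thm:uniq-subcritical}. The paper itself leaves the verification of \eqref{q43t3g} and of the subcritical growth condition $\lim_{\rho\to\infty}\|u\|_{*,\rho}=0$ to the reference \cite{SCGC}, whereas you have supplied those details explicitly (splitting the convolution at scale $2\rho$, using the kernel bound \eqref{Uddcv}, and then invoking both halves of \eqref{SCG-f} together with dominated convergence); your computations there are all sound.
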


Note that having $f\in\big[{\rm SCG}(\mathbb{R}^{n-1})\big]^M$ ensures that $f$ satisfies \eqref{exist:f} 
which, in turn, allows us to define the solution $u$ via the convolution with the Poisson kernel (cf. 
item {\it (c)} of Theorem~\ref{thm:Poisson.II}). Uniqueness follows at once from Theorem~\ref{thm:uniq-subcritical}.
To close, we remark that the second condition imposed on $f$ in \eqref{SCG-f}, 
which amounts to saying that $f$ has subcritical growth, is natural in the context of \eqref{Dir-BVP-SCG}.
Indeed, whenever $u$ satisfies $\lim\limits_{\rho\to\infty}\|u\|_{*,\rho}=0$ and 
$f:=u\big|^{{}^{\kappa-{\rm n.t.}}}_{\partial{\mathbb{R}}^{n}_{+}}$ exists ${\mathcal{L}}^{n-1}$-a.e. in 
${\mathbb{R}}^{n-1}$ it is not difficult to see that 
\begin{equation}\label{zxcvhsdr5}
\rho^{-1}\|f\|_{[L^\infty(B_{n-1}(0',\rho))]^M}\leq\|u\|_{*,\rho}\to 0\,\,\text{ as }\,\,\rho\to\infty,
\end{equation}
which ultimately implies the second condition in \eqref{SCG-f}.

\subsection{The $L^\infty$-Dirichlet boundary value problem}

Here we revisit Theorem~\ref{thm:Lp} and consider the (initially forbidden) end-point $p=\infty$.
Our result below is well-known in the particular case when $L=\Delta$, the Laplacian in ${\mathbb{R}}^n$, 
but all known proofs (e.g., that of \cite[Theorem~4.8, p.\,174]{GCRF85}, or that of \cite[Proposition~1, p.\,199]{St70}) 
make use of specialized properties of harmonic functions. Following \cite{SCGC}, here we are able to 
treat the $L^\infty$-Dirichlet boundary value problem in $\mathbb{R}^{n}_{+}$ for any homogeneous constant 
complex coefficient elliptic second-order system in a conceptually simple manner, relying on our more 
general result from Theorem~\ref{thm:SCG}. 

\begin{theorem}\label{thm:Linfty}
Let $L$ be an $M\times M$ homogeneous constant complex coefficient elliptic second-order system in ${\mathbb{R}}^n$ 
and fix an aperture parameter $\kappa>0$. Then the $L^\infty$-Dirichlet boundary value problem for $L$ in 
$\mathbb{R}^{n}_{+}$,
\begin{equation}\label{Dir-BVP-Linfty}
\left\{
\begin{array}{l}
u\in\big[{\mathcal{C}}^\infty(\mathbb{R}^{n}_{+})\cap L^\infty(\mathbb{R}^n_{+})\big]^M,
\\[4pt]
Lu=0\,\,\mbox{ in }\,\,\mathbb{R}^{n}_{+},
\\[6pt]
u\big|^{{}^{\kappa-{\rm n.t.}}}_{\partial{\mathbb{R}}^{n}_{+}}=f
\,\,\text{ at ${\mathcal{L}}^{n-1}$-a.e. point in }\,\,{\mathbb{R}}^{n-1},
\end{array}
\right.
\end{equation}
has a unique solution for each $f\in\big[L^\infty(\mathbb{R}^{n-1})\big]^M$. 
Moreover, the solution $u$ of \eqref{Dir-BVP-Linfty} is given by \eqref{exist:u} 
and satisfies the Weak Maximum Principle
\begin{equation}\label{eJHBb}
\|f\|_{[L^\infty(\mathbb{R}^{n-1})]^M}\leq
\|u\|_{[L^\infty(\mathbb{R}^n_{+})]^M}\leq C\|f\|_{[L^\infty(\mathbb{R}^{n-1})]^M},
\end{equation}
for some constant $C\in[1,\infty)$ that depends only on $L$ and $n$.
\end{theorem}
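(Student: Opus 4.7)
The plan is to deduce Theorem~\ref{thm:Linfty} as a direct corollary of Theorem~\ref{thm:SCG}, together with the identity $\|u\|_{[L^\infty(\mathbb{R}^n_{+})]^M} = \|\mathcal{N}_\kappa u\|_{L^\infty(\mathbb{R}^{n-1})}$ from \eqref{6543} and the pointwise control of $\mathcal{N}_\kappa u$ by $\mathcal{M}f$ from \eqref{exist:Nu-Mf}. The key observation is that both the boundary datum and the desired solution sit inside the scales already treated: any $f\in [L^\infty(\mathbb{R}^{n-1})]^M$ lies in $[\mathrm{SCG}(\mathbb{R}^{n-1})]^M$ (the weighted integrability in \eqref{SCG-f} is immediate, and $\rho^{-1}\|f\|_{[L^\infty(B_{n-1}(0',\rho))]^M}\leq \rho^{-1}\|f\|_{[L^\infty(\mathbb{R}^{n-1})]^M}\to 0$), while any bounded continuous $u$ on $\mathbb{R}^n_{+}$ automatically satisfies $\|u\|_{*,\rho}\leq \rho^{-1}\|u\|_{[L^\infty(\mathbb{R}^n_{+})]^M}\to 0$ as $\rho\to\infty$.

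For existence, given $f\in [L^\infty(\mathbb{R}^{n-1})]^M$, I define $u(x',t):=(P^L_t\ast f)(x')$ as in \eqref{exist:u}, which is meaningful because $f$ satisfies the size condition \eqref{exist:f}. Item~\textit{(c)} of Theorem~\ref{thm:Poisson.II} yields $u\in [\mathcal{C}^\infty(\mathbb{R}^n_{+})]^M$, $Lu=0$ in $\mathbb{R}^n_{+}$, the nontangential trace identity $u\big|^{{}^{\kappa-{\rm n.t.}}}_{\partial{\mathbb{R}}^n_{+}}=f$ almost everywhere, and the pointwise bound $(\mathcal{N}_\kappa u)(x')\leq C\,\mathcal{M}f(x')$ for every $x'\in\mathbb{R}^{n-1}$. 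Since $\|\mathcal{M}f\|_{L^\infty(\mathbb{R}^{n-1})}\leq \|f\|_{[L^\infty(\mathbb{R}^{n-1})]^M}$, combining with \eqref{6543} gives
\begin{equation*}
\|u\|_{[L^\infty(\mathbb{R}^n_{+})]^M}
=\|\mathcal{N}_\kappa u\|_{L^\infty(\mathbb{R}^{n-1})}
\leq C\|\mathcal{M}f\|_{L^\infty(\mathbb{R}^{n-1})}
\leq C\|f\|_{[L^\infty(\mathbb{R}^{n-1})]^M},
\end{equation*}
so $u$ solves \eqref{Dir-BVP-Linfty} and furnishes the upper estimate in \eqref{eJHBb}. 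The lower estimate follows from \eqref{nkc-EE-4} and \eqref{6543}:
$\|f\|_{[L^\infty(\mathbb{R}^{n-1})]^M}=\big\|u\big|^{{}^{\kappa-{\rm n.t.}}}_{\partial{\mathbb{R}}^n_{+}}\big\|_{[L^\infty(\mathbb{R}^{n-1})]^M}\leq \|\mathcal{N}_\kappa u\|_{L^\infty(\mathbb{R}^{n-1})}=\|u\|_{[L^\infty(\mathbb{R}^n_{+})]^M}$.

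For uniqueness, by linearity I reduce to showing that any $u\in[\mathcal{C}^\infty(\mathbb{R}^n_{+})\cap L^\infty(\mathbb{R}^n_{+})]^M$ with $Lu=0$ in $\mathbb{R}^n_{+}$ and $u\big|^{{}^{\kappa-{\rm n.t.}}}_{\partial{\mathbb{R}}^n_{+}}=0$ a.e. must vanish identically. Since $u$ is bounded, $\lim_{\rho\to\infty}\|u\|_{*,\rho}=0$ as noted above. Thus the hypotheses of the subcritical growth Dirichlet problem \eqref{Dir-BVP-SCG} are met with boundary datum $f\equiv 0$; the uniqueness half of Theorem~\ref{thm:SCG} (equivalently, the uniqueness statement of Theorem~\ref{thm:uniq-subcritical}) forces $u\equiv 0$ in $\mathbb{R}^n_{+}$.

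There is essentially no serious obstacle here: the conceptual content is packaged inside Theorems~\ref{thm:Poisson.II} and~\ref{thm:SCG}, and what remains is the simple observation that the $L^\infty$-category fits inside the subcritical growth framework. The only mildly delicate point is that standard proofs for the Laplacian use Poisson integral specific identities and the maximum principle, which are unavailable for general elliptic systems; the approach outlined above bypasses this by funnelling everything through the subcritical growth theory, where the Agmon-Douglis-Nirenberg estimates already did the heavy lifting.
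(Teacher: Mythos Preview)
Your proposal is correct and follows essentially the same approach as the paper: both reduce the $L^\infty$-Dirichlet problem to the subcritical growth Dirichlet problem of Theorem~\ref{thm:SCG} via the inclusion $L^\infty(\mathbb{R}^{n-1})\subset{\rm SCG}(\mathbb{R}^{n-1})$ and the observation that bounded functions have subcritical growth. The only cosmetic difference is that the paper derives the upper bound in \eqref{eJHBb} by specializing the Weak Local Maximum Principle \eqref{q43t3g} to $f\in L^\infty$, whereas you obtain it from the pointwise estimate \eqref{exist:Nu-Mf} combined with \eqref{6543}; both routes are equally short and valid.
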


Since 
\begin{align}\label{76tFfaf-7GF.ab345}
L^\infty(\mathbb{R}^{n-1})\subset{\rm SCG}(\mathbb{R}^{n-1}),
\end{align}
and since \eqref{q43t3g} readily implies \eqref{eJHBb}, we may regard \eqref{Dir-BVP-Linfty} 
as a ``sub-problem'' of \eqref{Dir-BVP-SCG}. This ensures existence (in the specified format), 
uniqueness, as well as the estimate claimed in \eqref{eJHBb}.

\subsection{The classical Dirichlet boundary value problem}

Given $E\subseteq{\mathbb{R}}^m$, for some $m\in{\mathbb{N}}$, define ${\mathcal{C}}^0_b(E)$ 
to be the space of ${\mathbb{C}}$-valued functions defined on $E$ which are continuous and bounded. 
The theorem below appears in \cite{SCGC}. The particular case when $L=\Delta$, the Laplacian in ${\mathbb{R}}^n$ 
is a well-known, classical result (see, e.g., \cite[Theorem~7.5, p.\,148]{ABR}, or \cite[Theorem~4.4, p.\,170]{GCRF85}), 
so the novelty here is the consideration of much more general operators. 

\begin{theorem}\label{thm:CLASSICAL}
Let $L$ be an $M\times M$ homogeneous constant complex coefficient elliptic second-order system in ${\mathbb{R}}^n$. 
Then the classical Dirichlet boundary value problem for $L$ in $\mathbb{R}^{n}_{+}$,
\begin{equation}\label{Dir-BVP-CLASSICAL}
\left\{
\begin{array}{l}
u\in\big[{\mathcal{C}}^\infty(\mathbb{R}^{n}_{+})\cap{\mathcal{C}}^0_b(\overline{\mathbb{R}^{n}_{+}})\big]^M,
\\[4pt]
Lu=0\,\,\mbox{ in }\,\,\mathbb{R}^{n}_{+},
\\[4pt]
u\big|_{\partial{\mathbb{R}}^{n}_{+}}=f\,\,\text{ in }\,\,{\mathbb{R}}^{n-1},
\end{array}
\right.
\end{equation}
has a unique solution for each $f\in\big[{\mathcal{C}}^0_b(\mathbb{R}^{n-1})\big]^M$. 
Moreover, the solution $u$ of \eqref{Dir-BVP-CLASSICAL} is given by \eqref{exist:u} 
and satisfies the Weak Maximum Principle
\begin{equation}\label{eJHBb-CLASSICAL}
\sup_{\mathbb{R}^{n-1}}|f|\leq\sup_{\overline{\mathbb{R}^n_{+}}}|u|\leq C\sup_{\mathbb{R}^{n-1}}|f|
\end{equation}
for some constant $C\in[1,\infty)$ that depends only on $L$ and $n$.
\end{theorem}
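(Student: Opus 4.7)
The plan is to exhibit the classical Dirichlet problem \eqref{Dir-BVP-CLASSICAL} as a strengthening of the $L^\infty$-Dirichlet problem from Theorem~\ref{thm:Linfty}, where ``strengthening'' means that the datum is not merely bounded but continuous, which forces the solution built from the Poisson integral to be genuinely continuous up to the boundary rather than only nontangentially convergent. Concretely, I would lift existence, uniqueness, and the right-hand estimate in \eqref{eJHBb-CLASSICAL} directly from Theorem~\ref{thm:Linfty}, and use the unrestricted convergence statements \eqref{exist:Nu-Mf-LIM}--\eqref{exist:Nu-Mf-LIM.222} of Theorem~\ref{thm:Poisson.II} to upgrade the nontangential trace to a pointwise boundary value at every point of $\mathbb{R}^{n-1}$.

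For existence, given $f\in\bigl[\mathcal{C}^0_b(\mathbb{R}^{n-1})\bigr]^M$, observe that the boundedness of $f$ trivially places it in $\bigl[L^1\bigl(\mathbb{R}^{n-1},\tfrac{dx'}{1+|x'|^n}\bigr)\bigr]^M$, so the candidate $u(x',t):=(P^L_t\ast f)(x')$ from \eqref{exist:u} is meaningfully defined. Item (c) of Theorem~\ref{thm:Poisson.II} yields $u\in\bigl[\mathcal{C}^\infty(\mathbb{R}^n_{+})\bigr]^M$ and $Lu=0$ in $\mathbb{R}^n_{+}$. Since $f\in\bigl[L^\infty(\mathbb{R}^{n-1})\bigr]^M\subset\bigl[{\rm SCG}(\mathbb{R}^{n-1})\bigr]^M$, Theorem~\ref{thm:Linfty} gives $u\in\bigl[L^\infty(\mathbb{R}^n_{+})\bigr]^M$ with $\|u\|_{L^\infty}\leq C\|f\|_{L^\infty}$. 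The key step---and the only point where genuine continuity of $f$ is used---is invoking \eqref{exist:Nu-Mf-LIM.222} (which requires every point of $\mathbb{R}^{n-1}$ to be a continuity point of $f$) to conclude that $u$ extends to a function in $\bigl[\mathcal{C}^0(\overline{\mathbb{R}^n_{+}})\bigr]^M$ whose restriction to $\partial\mathbb{R}^n_{+}$ equals $f$. Coupled with the already-established $L^\infty$ bound on $\mathbb{R}^n_{+}$ and the boundedness of $f$, this places $u$ in $\bigl[\mathcal{C}^0_b(\overline{\mathbb{R}^n_{+}})\bigr]^M$, so \eqref{Dir-BVP-CLASSICAL} is solved.

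Uniqueness will reduce cleanly to Theorem~\ref{thm:Linfty}: any solution $u$ of \eqref{Dir-BVP-CLASSICAL} is bounded by virtue of belonging to $\mathcal{C}^0_b(\overline{\mathbb{R}^n_{+}})$, and its continuity across $\partial\mathbb{R}^n_{+}$ guarantees that its nontangential boundary trace exists at every $x'\in\mathbb{R}^{n-1}$ and coincides with $f(x')$ (in particular $\mathcal{L}^{n-1}$-a.e.). Thus $u$ solves \eqref{Dir-BVP-Linfty} with boundary datum $f\in\bigl[L^\infty(\mathbb{R}^{n-1})\bigr]^M$, and the uniqueness portion of Theorem~\ref{thm:Linfty} forces $u$ to coincide with the Poisson integral constructed above. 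The Weak Maximum Principle \eqref{eJHBb-CLASSICAL} follows at once: the left inequality is immediate since $u$ agrees with $f$ on $\partial\mathbb{R}^n_{+}\subset\overline{\mathbb{R}^n_{+}}$, while the right inequality is exactly \eqref{eJHBb}. The only nontrivial point---the ``main obstacle,'' such as it is---is the passage from nontangential convergence to full continuity at the boundary, and this is pre-packaged in \eqref{exist:Nu-Mf-LIM}--\eqref{exist:Nu-Mf-LIM.222}; no new analytic work beyond Theorems~\ref{thm:Poisson.II} and~\ref{thm:Linfty} is required.
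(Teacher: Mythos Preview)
Your proposal is correct and follows essentially the same route as the paper: existence via item~(c) of Theorem~\ref{thm:Poisson.II} (in particular \eqref{exist:Nu-Mf-LIM.222} to obtain continuity up to the boundary), uniqueness by reduction to Theorem~\ref{thm:Linfty}, and the Weak Maximum Principle from \eqref{eJHBb}. The paper's own argument is the same in outline, only more tersely stated.
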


Existence is a consequence of item {\it (c)} of Theorem~\ref{thm:Poisson.II}, uniqueness is implied by 
Theorem~\ref{thm:Linfty}, and \eqref{eJHBb-CLASSICAL} follows from \eqref{eJHBb}. 

The nature of the constant $C$ appearing in the Weak Maximum Principle \eqref{eJHBb-CLASSICAL} 
(as well as other related inequalities) has been studied by G.~Kresin and V.~Maz'ya in \cite{KrMa}.

\vskip 0.08in
{\bf Open Question~6.} 
{\it In the context of Theorem~\ref{thm:CLASSICAL}, if the boundedness requirement is dropped 
both for the boundary datum and for the solution, does the resulting boundary value problem 
continue to be solvable?} 
\vskip 0.08in

This is known to be the case when $L=\Delta$, the Laplacian in ${\mathbb{R}}^n$; see, e.g. 
\cite[Theorem~7.11, p.\,150]{ABR}.

\subsection{The sublinear growth Dirichlet problem}\label{section:SLG-BVP}

Given a Lebesgue measurable set $E\subseteq\mathbb{R}^n$ and $\theta\in[0,1)$ 
we define the space of sublinear growth functions of order $\theta$, 
denoted by ${\rm SLG}_\theta(E)$, as the collection of Lebesgue measurable functions 
$w:E\to{\mathbb{C}}$ satisfying 
\begin{equation}
\|w\|_{{\rm SLG}_\theta(E)}:={\rm{ess\,sup}}_{x\in E}\frac{|w(x)|}{1+|x|^\theta}<\infty.
\end{equation}
Hence, ${\rm SLG}_0(E)=L^\infty(E)$. Also, it clear from definitions that 
for each continuous function $u\in{\rm SLG}_\theta(\mathbb{R}^{n}_{+})$ we have
\begin{equation}\label{aerge}
\|u\|_{*,\rho}\leq\frac{1+\rho^\theta}{\rho}\|u\|_{{\rm SLG}_\theta(\mathbb{R}^{n}_{+})}
\,\,\text{ for each }\,\,\rho\in(0,\infty).
\end{equation}
As a consequence, any continuous function in ${\rm SLG}_\theta({\mathbb{R}}^n_{+})$ with $\theta\in[0,1)$ has 
subcritical growth, i.e.,
\begin{equation}\label{ahTva}
\lim_{\rho\to\infty}\|u\|_{*,\rho}=0\,\,\text{ for each }\,\,
u\in{\mathcal{C}}^0({\mathbb{R}}^n_{+})\cap{\rm SLG}_\theta({\mathbb{R}}^n_{+})\,\,\text{ with }\,\,\theta\in[0,1).
\end{equation}
In fact, for each continuous function $u:{\mathbb{R}}^n_{+}\to{\mathbb{C}}$ we have 
\begin{equation}\label{aerge.GDFS}
\|u\|_{{\rm SLG}_\theta(\mathbb{R}^{n}_{+})}=\sup_{\rho>0}\frac{\rho}{1+\rho^\theta}\|u\|_{*,\rho}.
\end{equation}
Indeed, the right-pointing inequality is clear from \eqref{aerge}, while the left-pointing inequality in 
\eqref{aerge.GDFS} may be justified by writing 
\begin{align}\label{aerge.GDFS.2}
\frac{|u(x)|}{1+|x|^\theta} &\leq\frac{|x|}{1+|x|^\theta}\Big(|x|^{-1}\cdot\sup_{B(0,|x|)}|u|\Big)
=\frac{|x|}{1+|x|^\theta}\|u\|_{*,|x|}
\nonumber\\[6pt]
&\leq\sup_{\rho>0}\frac{\rho}{1+\rho^\theta}\|u\|_{*,\rho}\,\,\text{ for each }\,\,x\in\mathbb{R}^{n}_{+},
\end{align}
and then taking the supremum over all $x\in\mathbb{R}^{n}_{+}$. Finally, we wish to note that 
\begin{equation}\label{ahTva.222}
{\rm SLG}_\theta({\mathbb{R}}^{n-1})\subset{\rm SCG}(\mathbb{R}^{n-1})
\,\,\text{ whenever }\,\,\theta\in[0,1).
\end{equation}

The following result from \cite{SCGC} extends Theorem~\ref{thm:Linfty} (which corresponds to the case when $\theta=0$).

\begin{theorem}\label{thm:SLG}
Let $L$ be an $M\times M$ homogeneous constant complex coefficient elliptic second-order system in ${\mathbb{R}}^n$, 
and fix an aperture parameter $\kappa>0$ along with some exponent $\theta\in[0,1)$. Then the sublinear growth Dirichlet 
boundary value problem for $L$ in $\mathbb{R}^{n}_{+}$, formulated as 
\begin{equation}\label{Dir-BVP-SLG}
\left\{
\begin{array}{l}
u\in\big[{\mathcal{C}}^\infty(\mathbb{R}^{n}_{+})\cap{\rm SLG}_\theta(\mathbb{R}^{n}_{+})\big]^M,
\\[4pt]
Lu=0\,\,\mbox{ in }\,\,\mathbb{R}^{n}_{+},
\\[8pt]
u\big|^{{}^{\kappa-{\rm n.t.}}}_{\partial{\mathbb{R}}^{n}_{+}}=f
\,\,\text{ at ${\mathcal{L}}^{n-1}$-a.e. point in }\,\,{\mathbb{R}}^{n-1},
\end{array}
\right.
\end{equation}
has a unique solution for each $f\in\big[{\rm SLG}_\theta(\mathbb{R}^{n-1})\big]^M$. 
Moreover, the solution $u$ of \eqref{Dir-BVP-SLG} is given by \eqref{exist:u} and satisfies 
\begin{equation}\label{q43t3gqdfr}
\|f\|_{[{\rm SLG}_\theta(\mathbb{R}^{n-1})]^M}\leq
\|u\|_{[{\rm SLG}_\theta(\mathbb{R}^{n}_{+})]^M}
\leq C\|f\|_{[{\rm SLG}_\theta(\mathbb{R}^{n-1})]^M}
\end{equation}
for some constant $C\in[1,\infty)$ depending only on $L$, $n$, and $\theta$.
\end{theorem}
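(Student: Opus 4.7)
The plan is to reduce this result to Theorem~\ref{thm:SCG}. By \eqref{ahTva.222} we have the embedding $[\mathrm{SLG}_\theta(\mathbb{R}^{n-1})]^M \subset [\mathrm{SCG}(\mathbb{R}^{n-1})]^M$, and by \eqref{ahTva} any continuous function in $\mathrm{SLG}_\theta(\mathbb{R}^n_+)$ satisfies $\lim_{\rho\to\infty}\|u\|_{*,\rho}=0$. Hence any solution $u$ of \eqref{Dir-BVP-SLG} with boundary datum $f$ is automatically a solution of the subcritical growth Dirichlet problem \eqref{Dir-BVP-SCG}, so existence and uniqueness are handed to us by Theorem~\ref{thm:SCG}, together with the fact that $u$ is represented by the Poisson integral in \eqref{exist:u}. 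What remains is to prove the quantitative estimate \eqref{q43t3gqdfr}.

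For the lower estimate $\|f\|_{[\mathrm{SLG}_\theta(\mathbb{R}^{n-1})]^M}\leq \|u\|_{[\mathrm{SLG}_\theta(\mathbb{R}^n_+)]^M}$, I would argue directly from the nontangential pointwise trace: for $\mathcal{L}^{n-1}$-a.e.\ $x'\in\mathbb{R}^{n-1}$, choosing a sequence $y_j\to(x',0)$ within $\Gamma_\kappa(x')$, one has $|u(y_j)|\leq (1+|y_j|^\theta)\|u\|_{\mathrm{SLG}_\theta(\mathbb{R}^n_+)}$, and passing to the limit gives $|f(x')|\leq (1+|x'|^\theta)\|u\|_{\mathrm{SLG}_\theta(\mathbb{R}^n_+)}$. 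Dividing by $1+|x'|^\theta$ and taking the essential supremum yields the desired bound.

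The heart of the matter is the upper estimate. My plan is to invoke the Weak Local Maximum Principle \eqref{q43t3g} together with the characterization \eqref{aerge.GDFS} of the $\mathrm{SLG}_\theta$-norm in terms of the seminorms $\|u\|_{*,\rho}$. Fix $\rho>0$. Since $|f(y')|\leq \|f\|_{\mathrm{SLG}_\theta}(1+|y'|^\theta)$, the first term on the right-hand side of \eqref{q43t3g} is trivially controlled by $C(1+\rho^\theta)\|f\|_{\mathrm{SLG}_\theta}$. For the integral term, plugging in the bound for $|f|$ and passing to polar coordinates, one is reduced to estimating
\begin{equation*}
\rho\int_{2\rho}^\infty\frac{1+r^\theta}{\rho^n+r^n}\,r^{n-2}\,dr
\leq C\rho\int_{2\rho}^\infty\bigl(r^{-2}+r^{\theta-2}\bigr)\,dr
= C\bigl(1+\rho^\theta\bigr),
\end{equation*}
where the convergence of the second piece uses precisely $\theta<1$. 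Combining, $\sup_{B(0,\rho)\cap\mathbb{R}^n_+}|u|\leq C(1+\rho^\theta)\|f\|_{\mathrm{SLG}_\theta}$, i.e.\ $\frac{\rho}{1+\rho^\theta}\|u\|_{*,\rho}\leq C\|f\|_{\mathrm{SLG}_\theta}$. Taking the supremum over $\rho>0$ and using \eqref{aerge.GDFS} finishes the proof.

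The main obstacle, and the only real computation, is verifying that the tail integral in \eqref{q43t3g} scales correctly; the restriction $\theta<1$ is sharp here, as it is exactly what keeps $\int^\infty r^{\theta-2}\,dr$ convergent and, simultaneously, what makes the Poisson integral of $f\in\mathrm{SLG}_\theta$ well defined via \eqref{exist:f}. Everything else is organizational, leveraging Theorem~\ref{thm:SCG} as a black box.
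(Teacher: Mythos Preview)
Your proposal is correct and follows essentially the same route as the paper: both reduce to Theorem~\ref{thm:SCG} via the embedding \eqref{ahTva.222} for the boundary data and \eqref{ahTva} for the solution class, and both derive the two-sided estimate \eqref{q43t3gqdfr} from the Weak Local Maximum Principle \eqref{q43t3g} combined with the characterization \eqref{aerge.GDFS}. The paper simply asserts that \eqref{q43t3g} and \eqref{aerge.GDFS} ``readily imply'' \eqref{q43t3gqdfr}; you have supplied the explicit tail-integral computation that justifies this, and it is correct.
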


Thanks to \eqref{ahTva.222} plus the fact that \eqref{q43t3g} and \eqref{aerge.GDFS} 
readily imply \eqref{q43t3gqdfr}, we may regard \eqref{Dir-BVP-SLG} 
as a ``sub-problem'' of \eqref{Dir-BVP-SCG}. Such a point of view then guarantees 
existence (in the class of solutions specified in \eqref{Dir-BVP-SLG}),
uniqueness, and also the estimate claimed in \eqref{q43t3gqdfr}.

The linear function $u(x',t)=ta$ for each $(x',t)\in{\mathbb{R}}^n_{+}$ (where $a\in{\mathbb{C}}^M\setminus\{0\}$
is a fixed vector) serves as a counterexample to the version of Theorem~\ref{thm:SLG} corresponding 
to $\theta=1$. Thus, restricting the exponent $\theta$ to $[0,1)$ is optimal.

As first noted in \cite{SCGC}, we also have a Fatou-type result in the context of functions with 
sublinear growth (extending the case $p=\infty$ of Theorem~\ref{TFac-gR} which corresponds to $\theta=0$). 
This reads as follows:

\begin{theorem}\label{y7tgv}
Let $L$ be an $M\times M$ homogeneous constant complex coefficient elliptic second-order system in ${\mathbb{R}}^n$ 
and fix an aperture parameter $\kappa>0$. Assume $u\in\big[\mathcal{C}^\infty(\mathbb{R}^n_{+})\big]^M$ satisfies 
$Lu=0$ in $\mathbb{R}_{+}^n$. If $u\in\big[{\rm SLG}_\theta(\mathbb{R}^{n}_{+})\big]^M$ for some $\theta\in[0,1)$ then
\begin{align}\label{Tafva.2222loolo}
\begin{array}{l}
\big(u\big|^{{}^{\kappa-{\rm n.t.}}}_{\partial{\mathbb{R}}^n_{+}}\big)(x')
\,\,\text{ exists at ${\mathcal{L}}^{n-1}$-a.e. point }\,\,x'\in{\mathbb{R}}^{n-1},
\\[10pt]
\displaystyle
u\big|^{{}^{\kappa-{\rm n.t.}}}_{\partial{\mathbb{R}}^n_{+}}\,\,\text{ belongs to }\,\,
\big[{\rm SLG}_\theta({\mathbb{R}}^{n-1})\big]^M\subset\Big[L^1\Big({\mathbb{R}}^{n-1}\,,\,\frac{dx'}{1+|x'|^{n}}\Big)\Big]^M,
\\[12pt]
u(x',t)=\Big(P^L_t\ast\big(u\big|^{{}^{\kappa-{\rm n.t.}}}_{\partial{\mathbb{R}}^n_{+}}\big)\Big)(x')
\,\,\text{ for each point }\,\,(x',t)\in{\mathbb{R}}^n_{+}.
\end{array}
\end{align}
As a consequence of this and Theorem~\ref{thm:SLG}, in the present setting the following version 
of the Maximum Principle holds:
\begin{equation}\label{Taf-UHN.ER.4}
\big\|u\big|^{{}^{\kappa-{\rm n.t.}}}_{\partial{\mathbb{R}}^n_{+}}\big\|_{[{\rm SLG}_\theta(\mathbb{R}^{n-1})]^M}
\approx\|u\|_{[{\rm SLG}_\theta(\mathbb{R}^{n}_{+})]^M}.
\end{equation}
\end{theorem}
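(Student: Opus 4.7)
The plan is to deduce Theorem~\ref{y7tgv} by reducing its hypotheses to those of Theorem~\ref{thm:Fatou}, and then leverage Theorem~\ref{thm:SLG} to obtain the Maximum Principle in \eqref{Taf-UHN.ER.4}. The key observation is that membership in $\big[\mathrm{SLG}_\theta(\mathbb{R}^n_{+})\big]^M$ with $\theta\in[0,1)$ yields both a subcritical growth condition and a locally bounded pointwise estimate, which together dominate what Theorem~\ref{thm:Fatou} requires.

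First I would verify that $u$ satisfies the two hypotheses \eqref{subcritical:mild} and \eqref{u-integ}. Property \eqref{subcritical:mild} follows at once from \eqref{ahTva} (equivalently from \eqref{aerge}), since $\theta<1$ gives $\|u\|_{*,\rho}\leq\rho^{-1}(1+\rho^\theta)\|u\|_{[\mathrm{SLG}_\theta(\mathbb{R}^n_{+})]^M}\to 0$. To check \eqref{u-integ}, I would simply use the pointwise bound $|u(x',t)|\leq (1+(|x'|^2+t^2)^{\theta/2})\|u\|_{[\mathrm{SLG}_\theta]^M}$, so for any fixed $\varepsilon_0>0$
\begin{equation*}
\int_{\mathbb{R}^{n-1}}\frac{\sup_{0<t<\varepsilon_0}|u(x',t)|}{1+|x'|^n}\,dx'
\leq C\int_{\mathbb{R}^{n-1}}\frac{1+|x'|^\theta+\varepsilon_0^\theta}{1+|x'|^n}\,dx'<\infty,
\end{equation*}
since $\theta<1<n-1$ when $n\geq 2$. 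Theorem~\ref{thm:Fatou} then delivers the a.e. existence of the nontangential trace $f:=u\big|^{{}^{\kappa-\mathrm{n.t.}}}_{\partial\mathbb{R}^n_{+}}$, its membership in the weighted $L^1$ space, and the Poisson integral representation in the last line of \eqref{Tafva.2222loolo}.

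Next, I would establish that $f\in\big[\mathrm{SLG}_\theta(\mathbb{R}^{n-1})\big]^M$ with $\|f\|_{[\mathrm{SLG}_\theta(\mathbb{R}^{n-1})]^M}\leq \|u\|_{[\mathrm{SLG}_\theta(\mathbb{R}^n_{+})]^M}$. At a point $x'\in\mathbb{R}^{n-1}$ where the trace exists, letting $\Gamma_\kappa(x')\ni(y',t)\to(x',0)$ and using $|u(y',t)|\leq (1+(|y'|^2+t^2)^{\theta/2})\|u\|_{[\mathrm{SLG}_\theta]^M}$ followed by continuity of $s\mapsto s^{\theta/2}$ yields $|f(x')|\leq (1+|x'|^\theta)\|u\|_{[\mathrm{SLG}_\theta]^M}$. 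Taking the essential supremum over such $x'$ gives the desired bound, and \eqref{ahTva.222} upgrades this to the SCG statement already obtained from Theorem~\ref{thm:Fatou}.

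Finally, for the Maximum Principle \eqref{Taf-UHN.ER.4}, the $\lesssim$ direction is precisely the SLG$_\theta$ norm estimate just established. For the reverse direction, I would observe that thanks to the Poisson integral representation the function $u$ coincides with the solution produced by \eqref{exist:u} applied to $f$; since $f\in\big[\mathrm{SLG}_\theta(\mathbb{R}^{n-1})\big]^M$, the right-hand inequality in \eqref{q43t3gqdfr} of Theorem~\ref{thm:SLG} gives $\|u\|_{[\mathrm{SLG}_\theta(\mathbb{R}^n_{+})]^M}\leq C\|f\|_{[\mathrm{SLG}_\theta(\mathbb{R}^{n-1})]^M}$, completing \eqref{Taf-UHN.ER.4}. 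The main conceptual obstacle — really the only nontrivial step — is the SLG$_\theta$ control of the trace, which is where one must pass from the pointwise growth of $u$ on $\mathbb{R}^n_{+}$ to a boundary statement; everything else amounts to verifying hypotheses and invoking the two prior theorems.
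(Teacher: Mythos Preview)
Your argument is correct and essentially parallels the paper's, with one minor variation: the paper verifies the Dini-type condition \eqref{41fgfgv} (via \eqref{aerge}) and then invokes Corollary~\ref{thm:Fatou-SCG}, whereas you bypass that corollary and check the hypotheses \eqref{subcritical:mild} and \eqref{u-integ} of Theorem~\ref{thm:Fatou} directly. Since Corollary~\ref{thm:Fatou-SCG} is itself derived from Theorem~\ref{thm:Fatou} by exactly this kind of reduction, the two routes are equivalent in substance; your path is marginally more self-contained, the paper's is marginally shorter. One small slip: your parenthetical ``since $\theta<1<n-1$ when $n\geq 2$'' fails for $n=2$, but the integral still converges because the integrand decays like $|x'|^{\theta-n}$ at infinity and $\theta-n<-(n-1)$ is equivalent to $\theta<1$, which is what you actually need.
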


Since thanks to \eqref{aerge} we have 
\begin{equation}\label{yt6h}
\int_1^\infty\|u\|_{*,t}\,\frac{dt}{t}\leq\|u\|_{[{\rm SLG}_\theta(\mathbb{R}^{n}_{+})]^M} 
\int_1^\infty\frac{1+t^\theta}{t^2}\,dt<\infty,
\end{equation}	
we may invoke Corollary~\ref{thm:Fatou-SCG} to conclude that the properties listed in \eqref{Tafva.222234t5w5} hold. 
It remains to check that the second item in \eqref{Tafva.2222loolo} holds, and this may be seen directly from definitions. 

Once again, the linear function $u(x',t)=ta$ for each $(x',t)\in{\mathbb{R}}^n_{+}$ 
(where $a\in{\mathbb{C}}^M\setminus\{0\}$ is a fixed vector) becomes a counterexample 
to the version of Theorem~\ref{y7tgv} corresponding to the end-point case $\theta=1$. 
As such, restricting the exponent $\theta$ to $[0,1)$ is sharp.

\subsection{The Dirichlet problem with boundary data in H\"older spaces}

Given $E\subset\mathbb{R}^m$ (for some $m\in{\mathbb{N}}$) and $\theta>0$, we define the 
homogeneous H\"older space of order $\theta$ on $E$, denoted by $\dot{\mathcal{C}}^\theta(E)$, 
as the collection of functions $w:E\to{\mathbb{C}}$ satisfying 
\begin{equation}\label{yTFVC.1}
\|w\|_{\dot{\mathcal{C}}^\theta(E)}:=\sup_{\substack{x,y\in E\\ x\not=y}}\frac{|w(x)-w(y)|}{|x-y|^\theta}<\infty.
\end{equation}
Also, define the inhomogeneous H\"older space of order $\theta$ on $E$ as
\begin{equation}\label{yTFVC.2}
{\mathcal{C}}^\theta(E):=\big\{w\in\dot{\mathcal{C}}^\theta(E):\,\sup_{E}|w|<\infty\big\},
\end{equation}
and set $\|w\|_{{\mathcal{C}}^\theta(E)}:=\|w\|_{\dot{\mathcal{C}}^\theta(E)}+\sup_{E}|w|$ for each 
$w\in{\mathcal{C}}^\theta(E)$. Clearly, 
\begin{equation}\label{6er3dd}
{\mathcal{C}}^\theta(E)\subseteq\dot{\mathcal{C}}^\theta(E)\subseteq{\rm SLG}_\theta(E)\,\,\text{ for each }\,\,\theta>0.
\end{equation}
In particular, together with \eqref{ahTva} this implies that any function in 
$\dot{\mathcal{C}}^\theta({\mathbb{R}}^n_{+})$ with $\theta\in(0,1)$ has subcritical growth.

The well-posedness of the $\dot{\mathcal{C}}^\theta$-Dirichlet problem was studied in 
\cite{BMO-MMMM} (see also \cite{Holder-MMM}). Here we follow the approach in \cite{SCGC} 
which uses item {\it (d)} in Theorem~\ref{thm:Poisson.II} and Theorem~\ref{y7tgv} 
to give an alternative, conceptually simpler proof. 

\begin{theorem}\label{theor:Holder}
Let $L$ be an $M\times M$ homogeneous constant complex coefficient elliptic second-order system 
in ${\mathbb{R}}^n$, and fix $\theta\in(0,1)$. Then the $\dot{\mathcal{C}}^\theta$-Dirichlet boundary 
value problem for $L$ in $\mathbb{R}^{n}_{+}$, formulated as
\begin{equation}\label{Dir-BVP-Holder}
\left\{
\begin{array}{l}
u\in\big[{\mathcal{C}}^\infty(\mathbb{R}^{n}_{+})\cap\dot{\mathcal{C}}^\theta(\overline{\mathbb{R}^{n}_{+}})\big]^M,
\\[4pt]
Lu=0\,\,\mbox{ in }\,\,\mathbb{R}^{n}_{+},
\\[6pt]
u\big|_{\partial{\mathbb{R}}^{n}_{+}}=f\,\,\text{ on }\,\,{\mathbb{R}}^{n-1},
\end{array}
\right.
\end{equation}
has a unique solution for each $f\in\big[\dot{\mathcal{C}}^\theta(\mathbb{R}^{n-1})\big]^M$. 
The solution $u$ of \eqref{Dir-BVP-Holder} is given by \eqref{exist:u} and there exists a 
constant $C=C(n,L,\theta)\in[1,\infty)$ with the property that 
\begin{equation}\label{Dir-BVP-BMO-Car-frac22}
\|f\|_{[\dot{\mathcal{C}}^\theta(\mathbb{R}^{n-1})]^M}\leq
\|u\|_{[\dot{\mathcal{C}}^\theta(\overline{{\mathbb{R}}^n_{+}})]^M}
\leq C\,\|f\|_{[\dot{\mathcal{C}}^\theta(\mathbb{R}^{n-1})]^M}.
\end{equation}
\end{theorem}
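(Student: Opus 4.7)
The plan is to construct $u$ via the Poisson convolution formula \eqref{exist:u}, bootstrap the H\"older regularity of $f$ to $u$ through a pointwise $t^{\theta-1}$ gradient estimate, and then obtain uniqueness directly from Theorem~\ref{y7tgv}.

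For existence, I would first observe that $[\dot{\mathcal{C}}^\theta(\mathbb{R}^{n-1})]^M \subset [{\rm SLG}_\theta(\mathbb{R}^{n-1})]^M$, because $|f(x')| \leq |f(0')| + \|f\|_{\dot{\mathcal{C}}^\theta(\mathbb{R}^{n-1})} |x'|^\theta$. Since $\theta < 1 < n$, the integrability hypothesis \eqref{exist:f} is satisfied, and item~$(c)$ of Theorem~\ref{thm:Poisson.II} produces $u \in [\mathcal{C}^\infty(\mathbb{R}^n_{+})]^M$ with $Lu = 0$ in $\mathbb{R}^n_{+}$; moreover, \eqref{exist:Nu-Mf-LIM.222} ensures that $u$ extends continuously to $\overline{\mathbb{R}^n_{+}}$ with $u\big|_{\partial\mathbb{R}^n_{+}} = f$ pointwise on $\mathbb{R}^{n-1}$. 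The crux is then to establish the pointwise gradient bound
$$|(\nabla u)(x',t)| \leq C\, t^{\theta-1}\, \|f\|_{[\dot{\mathcal{C}}^\theta(\mathbb{R}^{n-1})]^M}, \qquad (x',t) \in \mathbb{R}^n_{+}.$$
To derive this, I would invoke the cancellation $\int_{\mathbb{R}^{n-1}} K^L(x'-y',t)\,dy' = I_{M \times M}$ from item~$(a)$ of Theorem~\ref{thm:Poisson.II} to rewrite, for each first-order multi-index $\alpha$,
$$(\partial^\alpha u)(x',t) = \int_{\mathbb{R}^{n-1}} (\partial^\alpha K^L)(x'-y',t)\,[f(y')-f(x')]\,dy',$$
and then combine the decay $|(\partial^\alpha K^L)(x'-y',t)| \leq C(t+|x'-y'|)^{-n}$ from \eqref{eq:Kest} with $|f(y')-f(x')| \leq \|f\|_{\dot{\mathcal{C}}^\theta} |x'-y'|^\theta$, integrating in polar coordinates. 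A conceptually cleaner route uses item~$(d)$ of Theorem~\ref{thm:Poisson.II} with $p := (n-1)/(n-1+\theta) \in ((n-1)/n,1)$ (so that $(n-1)(1/p-1) = \theta$): the function $t^{1-\theta}(\partial^\alpha K^L)(x'-\cdot,t)$ is then (a bounded multiple of) an $H^p$-molecule, and the gradient bound comes out of pairing this molecule against a representative of $f$ in the dual space $\dot{\mathcal{C}}^\theta(\mathbb{R}^{n-1})$ modulo constants.

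From the gradient bound, the right inequality in \eqref{Dir-BVP-BMO-Car-frac22} follows by connecting any $X, Y \in \overline{\mathbb{R}^n_{+}}$ with $|X-Y| = r$ via a path that first ascends vertically to height $r$, then travels horizontally at that height, and then descends, integrating $|\nabla u| \lesssim t^{\theta-1}$ along each segment; the subadditivity $(t+r)^\theta - t^\theta \leq r^\theta$ valid for $\theta \in (0,1)$ absorbs the vertical contributions, while the horizontal leg contributes $r \cdot r^{\theta-1} = r^\theta$. The left inequality in \eqref{Dir-BVP-BMO-Car-frac22} is immediate from $u\big|_{\partial\mathbb{R}^n_{+}} = f$. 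For uniqueness, if $u$ solves \eqref{Dir-BVP-Holder} with $f \equiv 0$, then $u \in [\dot{\mathcal{C}}^\theta(\overline{\mathbb{R}^n_{+}})]^M \subset [{\rm SLG}_\theta(\mathbb{R}^n_{+})]^M$ by \eqref{6er3dd}, so Theorem~\ref{y7tgv} delivers the Poisson representation $u(x',t) = \bigl(P^L_t \ast (u\big|^{{}^{\kappa-{\rm n.t.}}}_{\partial{\mathbb{R}}^n_{+}})\bigr)(x')$, which vanishes everywhere because the nontangential trace agrees with the vanishing continuous boundary value. The main technical obstacle I anticipate is executing the path-integration argument uniformly up to the boundary (where $|\nabla u|$ is genuinely singular), but this is handled cleanly by the subadditivity of $t \mapsto t^\theta$ noted above.
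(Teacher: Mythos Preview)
Your proposal is correct and follows essentially the same route as the paper: define $u$ by the Poisson integral, obtain the pointwise bound $t^{1-\theta}|\nabla u|\leq C\|f\|_{\dot{\mathcal C}^\theta}$ via the $H^p$--$\dot{\mathcal C}^\theta$ duality using item~{\it (d)} of Theorem~\ref{thm:Poisson.II} with $p=(n-1)/(n-1+\theta)$, upgrade this to the H\"older estimate by a Mean-Value/path-integration argument, and deduce uniqueness from Theorem~\ref{y7tgv} via \eqref{6er3dd}. Your alternative ``direct'' derivation of the gradient bound (subtracting $f(x')$ via the cancellation $\int(\partial^\alpha K^L)=0$ and integrating the kernel decay against $|y'-x'|^\theta$) is also valid and is in fact the elementary content underlying the molecular estimate; the paper simply packages it through the duality pairing.
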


To prove existence, consider $p:=\big(1+\tfrac{\theta}{n-1}\big)^{-1}$ and 
note that this further implies $p\in\big(\tfrac{n-1}{n}\,,\,1\big)$ and 
$\theta=(n-1)\big(\tfrac{1}{p}-1\big)$.
In particular, with $\sim$ denoting the equivalence relation identifying any two 
functions which differ by a constant (cf., e.g., \cite[Theorem~5.30, p.307]{GCRF85}),
we have 
$\big(H^p(\mathbb{R}^{n-1})\big)^\ast=\dot{\mathcal{C}}^{\theta}({\mathbb{R}}^{n-1})\big/\sim$.
Next, given an arbitrary function 
$f=(f_\beta)_{1\leq\beta\leq M}\in\big[\dot{\mathcal{C}}^\theta(\mathbb{R}^{n-1})\big]^{M}$,
it is meaningful to define $u(x',t):=(P^L_t\ast f)(x')$ for all $(x',t)\in{\mathbb{R}}^n_{+}$. Then 
$u\in\big[{\mathcal{C}}^\infty(\mathbb{R}^{n}_{+})\big]^M$ satisfies $Lu=0$ in $\mathbb{R}^{n}_{+}$,
and for each $j\in\{1,\dots,n\}$ and each $(x',t)\in{\mathbb{R}}^n_{+}$ we have
\begin{align}\label{exist:u-1Rfa.iy}
t^{1-\theta}(\partial_j u)(x',t)=\Bigg\{\Big\langle t^{1-(n-1)(\frac{1}{p}-1)}(\partial_j K^L_{\alpha\beta})(x'-\cdot,t)\,,\,
[f_\beta]\Big\rangle\Bigg\}_{1\leq\alpha\leq M}
\end{align}
where $\langle\cdot,\cdot\rangle$ is the pairing between distributions belonging to the Hardy space 
$H^p(\mathbb{R}^{n-1})$ and equivalence classes {\rm (}modulo constants{\rm )} of functions belonging to 
$\dot{\mathcal{C}}^{\theta}({\mathbb{R}}^{n-1})$. In turn, based on \eqref{exist:u-1Rfa.iy}, for each 
$(x',t)\in{\mathbb{R}}^n_{+}$ and $j\in\{1,\dots,n\}$ we may estimate 
\begin{align}\label{exist:u-1Rfa.iy.2}
&\big|t^{1-\theta}(\partial_j u)(x',t)\big|
\\[6pt]
&\hskip 0.50in\leq 
\big\|t^{1-(n-1)(\frac{1}{p}-1)}(\partial_j K^L)(x'-\cdot,t)\big\|_{[H^p(\mathbb{R}^{n-1})]^{M\times M}}
\|f\|_{[\dot{\mathcal{C}}^\theta(\mathbb{R}^{n-1})]^{M}}.
\nonumber
\end{align}
In view of \eqref{grefr}, this further entails the existence of a constant $C\in(0,\infty)$ with the property that
\begin{align}\label{exist:u-1Rfa.iy.3}
\sup_{(x',t)\in{\mathbb{R}}^n_{+}}\Big\{t^{1-\theta}\big|(\nabla u)(x',t)\big|\Big\}
\leq C\|f\|_{[\dot{\mathcal{C}}^\theta(\mathbb{R}^{n-1})]^{M}}.
\end{align}
On the other hand, a well-known elementary argument (of a purely real-variable nature, 
based solely on the Mean-Value Theorem; see, e.g., \cite[\S6, Step~4]{BMO-MMMM}) implies that, 
for some constant $C=C(n,\theta)\in(0,\infty)$,  
\begin{align}\label{exist:u-1Rfa.iy.4}
\|u\|_{[\dot{\mathcal{C}}^\theta({\mathbb{R}}^n_{+})]^M}
\leq C\sup_{(x',t)\in{\mathbb{R}}^n_{+}}\Big\{t^{1-\theta}\big|(\nabla u)(x',t)\big|\Big\}.
\end{align}
At this stage, \eqref{Dir-BVP-BMO-Car-frac22} follows by combining \eqref{exist:u-1Rfa.iy.3} with 
\eqref{exist:u-1Rfa.iy.4}, keeping in mind the natural identification 
$\dot{\mathcal{C}}^\theta({\mathbb{R}}^n_{+})\equiv\dot{\mathcal{C}}^\theta(\overline{{\mathbb{R}}^n_{+}})$.
This finishes the proof of the existence for the problem \eqref{Dir-BVP-Holder}, 
and the justification of \eqref{Dir-BVP-BMO-Car-frac22}. In view of \eqref{6er3dd}, 
uniqueness for the problem \eqref{Dir-BVP-Holder} follows from Theorem~\ref{y7tgv}. 

\medskip 

As a byproduct of the above argument, we see that for each $\theta\in(0,1)$ we have
\begin{align}\label{exist:u-1Rfa.iy.4tR}
\begin{array}{c}
\|u\|_{[\dot{\mathcal{C}}^\theta(\overline{{\mathbb{R}}^n_{+}})]^M}
\approx\sup\limits_{(x',t)\in{\mathbb{R}}^n_{+}}\Big\{t^{1-\theta}\big|(\nabla u)(x',t)\big|\Big\}
\approx\big\|u\big|_{\partial{\mathbb{R}}^{n}_{+}}\big\|_{[\dot{\mathcal{C}}^\theta(\mathbb{R}^{n-1})]^M}
\\[12pt]
\text{uniformly for 
$u\in\big[{\mathcal{C}}^\infty(\mathbb{R}^{n}_{+})\cap\dot{\mathcal{C}}^\theta(\overline{\mathbb{R}^{n}_{+}})\big]^M$ 
with $Lu=0$ in $\mathbb{R}^{n}_{+}$}.
\end{array}
\end{align}
In this regard, let us also remark that for each exponent $\theta\in(0,1)$ 
and each aperture parameter $\kappa>0$ we also have
\begin{align}\label{exist:u-1Rfa.iy.4tR.2}
\begin{array}{c}
\|u\|_{[\dot{\mathcal{C}}^\theta(\overline{{\mathbb{R}}^n_{+}})]^M}
\approx\sup\limits_{x'\in{\mathbb{R}}^{n-1}}\|u\|_{[\dot{\mathcal{C}}^\theta(\Gamma_\kappa(x'))]^M}
\,\,\text{ uniformly for} 
\\[12pt]
u\in\big[{\mathcal{C}}^\infty(\mathbb{R}^{n}_{+})\cap\dot{\mathcal{C}}^\theta(\overline{\mathbb{R}^{n}_{+}})\big]^M
\,\,\text{ with }\,\,Lu=0\,\,\text{ in }\,\,\mathbb{R}^{n}_{+}.
\end{array}
\end{align}
To justify this, let the function $u$ be as in the last line above and set $f:=u\big|_{\partial{\mathbb{R}}^{n}_{+}}$. 
Then, given any $x',y'\in{\mathbb{R}}^{n-1}$, if $z$ is the point in 
$\overline{\Gamma_\kappa(x')}\cap\overline{\Gamma_\kappa(y')}$ closest to $\partial\mathbb{R}^{n}_{+}$ 
we may estimate 
\begin{align}\label{exist:u-1Rfa.iy.4tR.3}
|f(x')-f(y')| &\leq|u(x')-u(z)|+|u(y')-u(z)|
\nonumber\\[6pt]
&\leq\|u\|_{[\dot{\mathcal{C}}^\theta(\Gamma_\kappa(x'))]^M}|x'-z|^\theta
+\|u\|_{[\dot{\mathcal{C}}^\theta(\Gamma_\kappa(y'))]^M}|y'-z|^\theta
\nonumber\\[6pt]
&\leq C\Big(\sup\limits_{\xi\in{\mathbb{R}}^{n-1}}\|u\|_{[\dot{\mathcal{C}}^\theta(\Gamma_\kappa(\xi))]^M}\Big)|x'-y'|^\theta,
\end{align}
for some $C=C(\kappa,\theta)\in(0,\infty)$. Hence, 
\begin{align}\label{exist:u-1Rfa.iy.3amn}
\big\|u\big|_{\partial{\mathbb{R}}^{n}_{+}}\big\|_{[\dot{\mathcal{C}}^\theta(\mathbb{R}^{n-1})]^M}
=\|f\|_{[\dot{\mathcal{C}}^\theta(\mathbb{R}^{n-1})]^{M}}
\leq C\sup\limits_{x'\in{\mathbb{R}}^{n-1}}\|u\|_{[\dot{\mathcal{C}}^\theta(\Gamma_\kappa(x'))]^M} 
\end{align}
which, together with \eqref{exist:u-1Rfa.iy.4tR}, establishes the left-pointing inequality in the first 
line of \eqref{exist:u-1Rfa.iy.4tR.2}. Since the right-pointing inequality is trivial, this concludes 
the proof of \eqref{exist:u-1Rfa.iy.4tR.2}. 

\medskip 

As an immediate consequence of Theorem~\ref{theor:Holder} and Theorem~\ref{thm:Linfty} we obtain the 
following well-posedness result for the Dirichlet problem with boundary data from {\it inhomogeneous} 
H\"older spaces. 

\begin{corollary}\label{theor:Holder-IN}
Let $L$ be an $M\times M$ homogeneous constant complex coefficient elliptic second-order system 
in ${\mathbb{R}}^n$, and fix $\theta\in(0,1)$. Then the ${\mathcal{C}}^\theta$-Dirichlet boundary 
value problem for $L$ in $\mathbb{R}^{n}_{+}$, formulated as
\begin{equation}\label{Dir-BVP-Holder-IN}
\left\{
\begin{array}{l}
u\in\big[{\mathcal{C}}^\infty(\mathbb{R}^{n}_{+})\cap{\mathcal{C}}^\theta(\overline{\mathbb{R}^{n}_{+}})\big]^M,
\\[4pt]
Lu=0\,\,\mbox{ in }\,\,\mathbb{R}^{n}_{+},
\\[6pt]
u\big|_{\partial{\mathbb{R}}^{n}_{+}}=f\,\,\text{ on }\,\,{\mathbb{R}}^{n-1},
\end{array}
\right.
\end{equation}
has a unique solution for each $f\in\big[{\mathcal{C}}^\theta(\mathbb{R}^{n-1})\big]^M$. 
The solution $u$ of \eqref{Dir-BVP-Holder-IN} is given by \eqref{exist:u} and there exists a 
constant $C=C(n,L,\theta)\in[1,\infty)$ with the property that 
\begin{equation}\label{Dir-BVP-BMO-Car-frac22-IN}
\|f\|_{[{\mathcal{C}}^\theta(\mathbb{R}^{n-1})]^M}\leq
\|u\|_{[{\mathcal{C}}^\theta(\overline{{\mathbb{R}}^n_{+}})]^M}
\leq C\,\|f\|_{[{\mathcal{C}}^\theta(\mathbb{R}^{n-1})]^M}.
\end{equation}
\end{corollary}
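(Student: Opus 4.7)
The plan is to deduce this corollary by combining, for one and the same candidate solution, the results already established for the homogeneous Hölder scale (Theorem~\ref{theor:Holder}) and for the $L^\infty$ scale (Theorem~\ref{thm:Linfty}). Given $f\in\big[\mathcal{C}^\theta(\mathbb{R}^{n-1})\big]^M$, I would observe that $f$ simultaneously belongs to $\big[\dot{\mathcal{C}}^\theta(\mathbb{R}^{n-1})\big]^M$ and to $\big[L^\infty(\mathbb{R}^{n-1})\big]^M$. Define $u$ by the Poisson integral formula \eqref{exist:u}; since $f\in\big[\mathcal{C}^0(\mathbb{R}^{n-1})\big]^M\cap\big[L^1(\mathbb{R}^{n-1},\frac{dx'}{1+|x'|^n})\big]^M$, property \eqref{exist:Nu-Mf-LIM.222} from Theorem~\ref{thm:Poisson.II} guarantees that $u$ extends uniquely to a function in $\big[\mathcal{C}^0(\overline{\mathbb{R}^n_+})\big]^M$ whose pointwise boundary trace equals $f$ on $\mathbb{R}^{n-1}$.

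For the regularity/size of $u$, I would invoke the two ambient theorems separately against the same $u$. Theorem~\ref{theor:Holder}, applied to $f$ viewed as a datum in $\big[\dot{\mathcal{C}}^\theta(\mathbb{R}^{n-1})\big]^M$, yields $u\in\big[\dot{\mathcal{C}}^\theta(\overline{\mathbb{R}^n_+})\big]^M$ with
\[
\|u\|_{[\dot{\mathcal{C}}^\theta(\overline{\mathbb{R}^n_+})]^M}
\leq C\,\|f\|_{[\dot{\mathcal{C}}^\theta(\mathbb{R}^{n-1})]^M}.
\]
Theorem~\ref{thm:Linfty}, applied to $f$ viewed as a datum in $\big[L^\infty(\mathbb{R}^{n-1})\big]^M$, yields $u\in\big[L^\infty(\mathbb{R}^n_+)\big]^M$ with $\|u\|_{[L^\infty(\mathbb{R}^n_+)]^M}\leq C\|f\|_{[L^\infty(\mathbb{R}^{n-1})]^M}$. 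Adding the two estimates produces $u\in\big[\mathcal{C}^\theta(\overline{\mathbb{R}^n_+})\big]^M$ together with the right-pointing inequality in \eqref{Dir-BVP-BMO-Car-frac22-IN}; the left-pointing inequality is immediate by restricting to the boundary, since neither the sup-norm nor the Hölder seminorm increases under restriction and $u\big|_{\partial\mathbb{R}^n_+}=f$.

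For uniqueness, suppose $u_1$ and $u_2$ are two solutions of \eqref{Dir-BVP-Holder-IN} sharing the boundary datum $f$, and set $v:=u_1-u_2$. Then $v\in\big[\mathcal{C}^\infty(\mathbb{R}^n_+)\cap\mathcal{C}^\theta(\overline{\mathbb{R}^n_+})\big]^M$, in particular $v$ is bounded on $\mathbb{R}^n_+$ and continuous up to the boundary with $v\big|_{\partial\mathbb{R}^n_+}=0$. By continuity, the nontangential boundary trace $v\big|^{{}^{\kappa-{\rm n.t.}}}_{\partial\mathbb{R}^n_+}$ exists everywhere on $\mathbb{R}^{n-1}$ and coincides with the pointwise trace, hence vanishes identically. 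Applying the uniqueness part of Theorem~\ref{thm:Linfty} to $v$ (with zero boundary datum) forces $v\equiv 0$ in $\mathbb{R}^n_+$, so $u_1=u_2$.

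I do not foresee a genuine obstacle here: the corollary is essentially a bookkeeping argument that merges two pre-existing well-posedness statements for a common Poisson extension. The only mild point that requires care is ensuring that the pointwise boundary identification $u\big|_{\partial\mathbb{R}^n_+}=f$ demanded in \eqref{Dir-BVP-Holder-IN} (as opposed to the nontangential a.e. identification present in \eqref{Dir-BVP-Linfty}) is actually realized, but this is supplied at once by \eqref{exist:Nu-Mf-LIM.222}, exploiting that $f$ is continuous and bounded.
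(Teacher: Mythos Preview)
Your proposal is correct and follows essentially the same approach as the paper, which simply states that the corollary is an immediate consequence of Theorem~\ref{theor:Holder} and Theorem~\ref{thm:Linfty}. You have in fact supplied more detail than the paper does, carefully handling the pointwise boundary identification via \eqref{exist:Nu-Mf-LIM.222} and spelling out the uniqueness argument explicitly.
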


As mentioned earlier in the narrative (cf. \eqref{TYd-YG-76g}), the Lam\'e system of elasticity 
fits into the general framework considered in this paper and, as such, all results so far 
apply to this special system. In this vein, it is of interest to raise the following issue:

\vskip 0.08in
{\bf Open Question~7.} 
{\it Formulate and prove Fatou-type theorems and well-posedness results for various versions of 
the Dirichlet problem in the upper half-space, of the sort discussed in this paper, for the Stokes 
system of hydrodynamics.}
\vskip 0.08in

\subsection{The Dirichlet problem with data in {\rm BMO} and {\rm VMO}}

In his ground breaking 1971 article \cite{Fe}, C.~Fefferman writes 
``{\it The main idea in proving {\rm [}that the dual of the Hardy space $H^1$ 
is the John-Nirenberg space ${\rm BMO}${\rm ]} is to study the ${\rm [}harmonic${\rm ]} 
Poisson integral of a function in ${\rm BMO}$.}'' For example, the key PDE result announced 
by C.~Fefferman in \cite{Fe} states that 
\begin{equation}\label{L-dJHG}
\displaystyle
\parbox{11.10cm}{a measurable function $f$ with
$\displaystyle\int_{{\mathbb{R}}^{n-1}}|f(x')|(1+|x'|)^{-n}\,dx'<+\infty$ belongs to the space
${\rm BMO}({\mathbb{R}}^{n-1})$ if and only if its Poisson integral 
$u:{\mathbb{R}}^n_{+}\to{\mathbb{R}}$, with respect to the Laplace operator
in ${\mathbb{R}}^n$, satisfies $\displaystyle\sup\limits_{x'\in{\mathbb{R}}^{n-1}}\sup\limits_{r>0}
\Big\{r^{1-n}\int\limits_{|x'-y'|<r}\int_0^r|(\nabla u)(y',t)|^2\,t\,dt\,dx'\Big\}<+\infty$.}
\end{equation}
One of the primary aims in \cite{BMO-MMMM} was to advance this line of research by developing machinery 
capable of dealing with the scenario in which the Laplacian in \eqref{L-dJHG} is replaced by much more 
general second-order elliptic systems with complex coefficients. To review the relevant results in this 
regard, some notation is needed. 

A Borel measure $\mu$ in $\mathbb{R}^{n}_{+}$ is said to be a Carleson measure in 
$\mathbb{R}^{n}_{+}$ provided
\begin{equation}\label{defi-Carleson}
\|\mu\|_{\mathcal{C}(\mathbb{R}_{+}^{n})}:=\sup_{Q\subset\mathbb{R}^{n-1}} 
\frac{1}{{\mathcal{L}}^{n-1}(Q)}\int_{0}^{\ell(Q)}\int_Q d\mu(x',t)<\infty,
\end{equation}
where the supremum runs over all cubes $Q$ in $\mathbb{R}^{n-1}$ (with sides parallel to the 
coordinate axes), and $\ell(Q)$ is the side-length of $Q$. Call a Borel measure 
$\mu$ in $\mathbb{R}^{n}_{+}$ a vanishing Carleson measure whenever
$\mu$ is a Carleson measure to begin with and, in addition,  
\begin{equation}\label{defi-CarlesonVan}
\lim_{r\to 0^{+}}\left(\sup_{Q\subset\mathbb{R}^{n-1},\,\ell(Q)\leq r} 
\frac{1}{{\mathcal{L}}^{n-1}(Q)}
\int_{0}^{\ell(Q)}\int_Q d\mu(x',t)\right)=0.
\end{equation}

Next, the Littlewood-Paley measure associated with a continuously differentiable function 
$u$ in ${\mathbb{R}}^n_{+}$ is $|\nabla u(x',t)|^2\,t\,dx'dt$, and we set  
\begin{equation}\label{ustarstar}
\|u\|_{**}:=\sup_{Q\subset\mathbb{R}^{n-1}}\left(\frac{1}{{\mathcal{L}}^{n-1}(Q)}
\int_{0}^{\ell(Q)}
\int_Q|\nabla u(x',t)|^2\,t\,dx'dt\right)^\frac12.
\end{equation}
In particular, for a continuously differentiable function $u$ in ${\mathbb{R}}^n_{+}$ we have 
\begin{equation}\label{ncud}
\|u\|_{**}<\infty\,\,\Longleftrightarrow\,\,
|\nabla u(x',t)|^2\,t\,dx'dt\,\,\text{ is a Carleson measure in }\,\,{\mathbb{R}}^n_{+}.
\end{equation}

The John-Nirenberg space $\mathrm{BMO}(\mathbb{R}^{n-1})$, of functions of bounded mean oscillations 
in ${\mathbb{R}}^{n-1}$, is defined as the collection of complex-valued functions 
$f\in L^1_{\rm loc}(\mathbb{R}^{n-1})$ satisfying  
\begin{equation}\label{defi-BMO}
\|f\|_{\mathrm{BMO}(\mathbb{R}^{n-1})}:=
\sup_{Q\subset\mathbb{R}^{n-1}}\frac{1}{{\mathcal{L}}^{n-1}(Q)}
\int_Q\big|f(x')-f_Q\big|\,dx'<\infty,
\end{equation}
where $f_Q:=\tfrac{1}{{\mathcal{L}}^{n-1}(Q)}\int_Qf\,d{\mathcal{L}}^{n-1}$ 
for each cube $Q$ in $\mathbb{R}^{n-1}$, and with the supremum taken over all such cubes $Q$. 
It turns out (cf., e.g., \cite{FS}) that 
\begin{equation}\label{eq:aaAabgr-22.aaa}
{\rm BMO}\big({\mathbb{R}}^{n-1}\big)\subset
L^1\Big({\mathbb{R}}^{n-1}\,,\,\frac{dx'}{1+|x'|^n}\Big)
\end{equation}
which opens the door for considering the convolution of the Poisson kernel from Theorem~\ref{thm:Poisson}
with {\rm BMO} functions in $\mathbb{R}^{n-1}$ (cf. item {\it (c)} in Theorem~\ref{thm:Poisson.II}).
Clearly, for every $f\in L^1_{\rm loc}(\mathbb{R}^{n-1})$ we have
\begin{equation}\label{defi-BMO-CCC}
\begin{array}{ll}
\|f\|_{\mathrm{BMO}(\mathbb{R}^{n-1})}=\|f+C\|_{\mathrm{BMO}(\mathbb{R}^{n-1})}, & \forall\,C\in{\mathbb{C}},
\\[6pt]
\|f\|_{\mathrm{BMO}(\mathbb{R}^{n-1})}=\|\tau_{z'}f\|_{\mathrm{BMO}(\mathbb{R}^{n-1})}, & \forall\,z'\in{\mathbb{R}}^{n-1},
\\[6pt]
\|f\|_{\mathrm{BMO}(\mathbb{R}^{n-1})}=\|\delta_{\lambda}f\|_{\mathrm{BMO}(\mathbb{R}^{n-1})}, & \forall\,\lambda\in(0,\infty),
\end{array}
\end{equation}
where $\tau_{z'}$ is the operator of translation by $z'$, i.e., 
$(\tau_{z'}f)(x'):=f(x'+z')$ for every $x'\in{\mathbb{R}}^{n-1}$, 
and $\delta_\lambda$ is the operator of dilation by $\lambda$, i.e., 
$(\delta_\lambda f)(x'):=f(\lambda x')$ for every $x'\in{\mathbb{R}}^{n-1}$.

As visible from the first line of \eqref{defi-BMO-CCC}, it happens that 
$\|\cdot\|_{\mathrm{BMO}(\mathbb{R}^{n-1}}$ is only a seminorm. Indeed, 
for every $f\in L^1_{\rm loc}(\mathbb{R}^{n-1})$ we have 
$\|f\|_{\mathrm{BMO}(\mathbb{R}^{n-1})}=0$ if and only if $f$ is a constant (in ${\mathbb{C}}$) 
at ${\mathcal{L}}^{n-1}$-a.e. in ${\mathbb{R}}^{n-1}$. Occasionally, we find it useful to mod out 
its null-space, in order to render the resulting quotient space Banach. Specifically, for two 
$\mathbb{C}$-valued Lebesgue measurable functions $f,g$ defined in $\mathbb{R}^{n-1}$ we say that $f\sim g$ 
provided $f-g$ is constant ${\mathcal{L}}^{n-1}$-a.e. in ${\mathbb{R}}^{n-1}$. 
This is an equivalence relation and we let 
\begin{align}\label{jgsyjw-AASSS}
[f]:=\big\{g:\mathbb{R}^{n-1}\to\mathbb{C}:\,\text{$g$ measurable and $f\sim g$}\big\}
\end{align}
denote the equivalence class of any given $\mathbb{C}$-valued Lebesgue measurable function $f$ 
defined in $\mathbb{R}^{n-1}$. In particular, the quotient space 
\begin{equation}\label{defi-BMO-tilde}
\widetilde{\mathrm{BMO}}(\mathbb{R}^{n-1}):=\big\{[f]:\,f\in{\mathrm{BMO}}(\mathbb{R}^{n-1})\big\}.
\end{equation}
becomes complete (hence Banach) when equipped with the norm
\begin{align}\label{defi-BMO-nbgxcr}
\big\|\,[f]\,\big\|_{\widetilde{\mathrm{BMO}}(\mathbb{R}^{n-1})}:=\|f\|_{\mathrm{BMO}(\mathbb{R}^{n-1})}
\,\,\text{ for each }\,\,f\in{\mathrm{BMO}}(\mathbb{R}^{n-1}).
\end{align}

Moving on, the Sarason space of $\mathbb{C}$-valued functions of vanishing 
mean oscillations in ${\mathbb{R}}^{n-1}$ is defined by
\begin{align}\label{defi-VMO}
{\mathrm{VMO}}(\mathbb{R}^{n-1})&:=\Bigg\{f\in{\mathrm{BMO}}(\mathbb{R}^{n-1}):
\\[-6pt]
&\hskip 0.30in
\lim_{r\to 0^{+}}\left(\sup_{Q\subset\mathbb{R}^{n-1},\,\ell(Q)\leq r}\,\,
\frac{1}{{\mathcal{L}}^{n-1}(Q)}\int_Q\big|f(x')-f_Q\big|\,dx'\right)=0\Bigg\}.
\nonumber
\end{align}
The space ${\mathrm{VMO}}(\mathbb{R}^{n-1})$ turns out to be a closed subspace of 
${\mathrm{BMO}}(\mathbb{R}^{n-1})$. In fact, if ${\mathrm{UC}}({\mathbb{R}}^{n-1})$ stands for the space
of $\mathbb{C}$-valued uniformly continuous functions in ${\mathbb{R}}^{n-1}$, then a well-known result of 
Sarason \cite[Theorem~1, p.\,392]{Sa75} implies that, in fact, 
\begin{equation}\label{ku6ffcfc}
\parbox{10.70cm}{$f\in{\mathrm{BMO}}({\mathbb{R}}^{n-1})$ belongs 
to the space ${\mathrm{VMO}}({\mathbb{R}}^{n-1})$ if and only if there exists a sequence 
$\{f_j\}_{j\in{\mathbb{N}}}\subset{\mathrm{UC}}({\mathbb{R}}^{n-1})\cap{\mathrm{BMO}}({\mathbb{R}}^{n-1})$ 
such that $\|f-f_j\|_{{\mathrm{BMO}}({\mathbb{R}}^{n-1})}\longrightarrow 0$ as $j\to\infty$.}
\end{equation}
Another characterization of ${\mathrm{VMO}}(\mathbb{R}^{n-1})$
due to Sarason (cf. \cite[Theorem~1, p.\,392]{Sa75}) is as follows:
\begin{equation}\label{defi-VMO-SSS}
\parbox{10.70cm}{a given function $f\in{\mathrm{BMO}}(\mathbb{R}^{n-1})$ actually belongs to the space
${\mathrm{VMO}}(\mathbb{R}^{n-1})$ if and only if $\lim\limits_{{\mathbb{R}}^{n-1}\ni z'\to 0'}
\|\tau_{z'}f-f\|_{{\mathrm{BMO}}(\mathbb{R}^{n-1})}=0$.}
\end{equation}

We are now ready to recall the first main result from \cite{BMO-MMMM}. This concerns the well-posedness of the 
$\mathrm{BMO}$-Dirichlet problem in the upper half-space for systems $L$ as in 
\eqref{L-def}-\eqref{L-ell.X}. The existence of a unique solution is established 
in the class of functions $u$ satisfying a Carleson measure condition 
(expressed in terms of the finiteness of \eqref{ustarstar}). The formulation of 
the theorem emphasizes the fact that this contains as a ``sub-problem'' the 
$\mathrm{VMO}$-Dirichlet problem for $L$ in ${\mathbb{R}}^n_{+}$
(in which scenario $u$ satisfies a vanishing Carleson measure condition). 

\begin{theorem}\label{them:BMO-Dir}
Let $L$ be an $M\times M$ elliptic constant complex coefficient system as in 
\eqref{L-def}-\eqref{L-ell.X}, and fix an aperture parameter $\kappa>0$. 
Then the $\mathrm{BMO}$-Dirichlet boundary value problem for $L$ in $\mathbb{R}^{n}_{+}$, namely 
\begin{equation}\label{Dir-BVP-BMO}
\left\{
\begin{array}{l}
u\in\big[{\mathcal{C}}^\infty(\mathbb{R}^{n}_{+})\big]^M,\quad Lu=0\,\,\mbox{ in }\,\,\mathbb{R}^{n}_{+},
\\[4pt]
\big|\nabla u(x',t)\big|^2\,t\,dx'dt\,\,\mbox{is a Carleson measure in }\mathbb{R}^{n}_{+},
\\[6pt]
u\big|^{{}^{\kappa-{\rm n.t.}}}_{\partial{\mathbb{R}}^{n}_{+}}=f
\,\,\text{ at ${\mathcal{L}}^{n-1}$-a.e. point in }\,\,{\mathbb{R}}^{n-1},
\end{array}
\right.
\end{equation}
has a unique solution for each $f\in\big[\mathrm{BMO}(\mathbb{R}^{n-1})\big]^M$. 
Moreover, this unique solution satisfies the following additional properties:
\begin{list}{$(\theenumi)$}{\usecounter{enumi}\leftmargin=.8cm
\labelwidth=.8cm\itemsep=0.2cm\topsep=.1cm
\renewcommand{\theenumi}{\alph{enumi}}}
\item[(i)] With $P^L$ denoting the Poisson kernel for $L$ in $\mathbb{R}^{n}_{+}$ from
Theorem~\ref{thm:Poisson}, one has the Poisson integral representation formula
\begin{equation}\label{eqn-Dir-BMO:u}
u(x',t)=(P_t^L*f)(x'),\qquad\forall\,(x',t)\in{\mathbb{R}}^n_{+}.
\end{equation}
\item[(ii)] The size of the solution is comparable to the size of the boundary datum, i.e., there 
exists $C=C(n,L)\in(1,\infty)$ with the property that 
\begin{equation}\label{Dir-BVP-BMO-Car}
C^{-1}\|f\|_{[\mathrm{BMO}(\mathbb{R}^{n-1})]^M}\leq
\|u\|_{**}\leq C\,\|f\|_{[\mathrm{BMO}(\mathbb{R}^{n-1})]^M}.
\end{equation}
\item[(iii)] There exists a constant $C=C(n,L)\in(0,\infty)$ independent of $u$ 
with the property that the following uniform {\rm BMO} estimate holds:
\begin{equation}\label{feps-BTTGB}
\sup_{\varepsilon>0}\|u(\cdot,\varepsilon)\|_{[\mathrm{BMO}(\mathbb{R}^{n-1})]^M}
\leq C\,\|u\|_{**}.
\end{equation}
Moreover, $u$ satisfies a vanishing Carleson measure condition in $\mathbb{R}^{n}_{+}$ 
if and only if $u$ converges to its boundary datum vertically in 
$\big[\mathrm{BMO}(\mathbb{R}^{n-1})\big]^M$, i.e.,  
\begin{equation}\label{eqn:conv-Bfed}
{}\hskip 0.30in
\lim_{\varepsilon\to 0^+}\|u(\cdot,\varepsilon)-f\|_{[\mathrm{BMO}(\mathbb{R}^{n-1})]^M}=0
\Longleftrightarrow
\left\{
\begin{array}{l}
\big|\nabla u(x',t)\big|^2\,t\,dx'dt\,\,\,\text{is}
\\[4pt]
\text{a vanishing Carleson}
\\[4pt]
\text{measure in }\,\,\mathbb{R}^{n}_{+}.
\end{array}
\right.
\end{equation}
\item[(iv)] The following regularity results hold:
\begin{align}\label{Dir-BVP-Reg}
f\in\big[\mathrm{VMO}(\mathbb{R}^{n-1})\big]^M
& \Longleftrightarrow
\left\{
\begin{array}{l}
\big|\nabla u(x',t)\big|^2\,t\,dx'dt\,\,\mbox{is a vanishing}
\\[4pt]
\text{Carleson measure in }\,\,\mathbb{R}^{n}_{+}
\end{array}
\right.
\\[6pt]
& \Longleftrightarrow
\lim_{{\mathbb{R}}^n_{+}\ni z\to 0}\|\tau_z u-u\|_{**}=0,
\label{Dir-BVP-Reg.TTT}
\end{align}
where $(\tau_z u)(x):=u(x+z)$ for each $x,z\in{\mathbb{R}}^n_{+}$.
\end{list}

As a consequence, the $\mathrm{VMO}$-Dirichlet boundary value problem for $L$ 
in $\mathbb{R}^{n}_{+}$, i.e.,
\begin{equation}\label{Dir-BVP-VMO}
\left\{
\begin{array}{l}
u\in\big[{\mathcal{C}}^\infty(\mathbb{R}^{n}_{+})\big]^M,\quad Lu=0\,\,\mbox{ in }\,\,\mathbb{R}^{n}_{+},
\\[4pt]
\big|\nabla u(x',t)\big|^2\,t\,dx'dt\,\,\mbox{is a vanishing Carleson measure in }\mathbb{R}^{n}_{+},
\\[6pt]
u\big|^{{}^{\kappa-{\rm n.t.}}}_{\partial{\mathbb{R}}^{n}_{+}}=f
\,\,\text{ at ${\mathcal{L}}^{n-1}$-a.e. point in }\,\,{\mathbb{R}}^{n-1},
\end{array}
\right.
\end{equation}
has a unique solution for each $f\in\big[\mathrm{VMO}(\mathbb{R}^{n-1})\big]^M$. 
Moreover, its solution is given by \eqref{eqn-Dir-BMO:u}, satisfies \eqref{Dir-BVP-BMO-Car}-\eqref{feps-BTTGB}, and
\begin{equation}\label{eqn:conv-BfEE}
\lim_{\varepsilon\to 0^+}\|u(\cdot,\varepsilon)-f\|_{[\mathrm{BMO}(\mathbb{R}^{n-1})]^M}=0.
\end{equation}
\end{theorem}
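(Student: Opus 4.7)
My plan is to treat existence, the Carleson estimate, and uniqueness in sequence, and then extract the VMO statements as corollaries of the refined boundary behavior.

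\textbf{Existence via the Poisson integral.} Given $f\in[\mathrm{BMO}(\mathbb{R}^{n-1})]^M$, the embedding \eqref{eq:aaAabgr-22.aaa} places $f$ in $[L^1(\mathbb{R}^{n-1},dx'/(1+|x'|^n))]^M$, so item $(c)$ of Theorem~\ref{thm:Poisson.II} makes $u(x',t):=(P^L_t\ast f)(x')$ meaningful and immediately furnishes the smoothness, the identity $Lu=0$ in ${\mathbb{R}}^n_{+}$, and the nontangential trace $u\big|^{{}^{\kappa-{\rm n.t.}}}_{\partial\mathbb{R}^{n}_{+}}=f$ at every Lebesgue point of $f$ (hence $\mathcal{L}^{n-1}$-a.e.). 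This settles the first and third lines of \eqref{Dir-BVP-BMO} as well as part $(i)$, modulo verifying the Carleson condition on $|\nabla u|^2 t\,dx'dt$.

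\textbf{The upper Carleson estimate.} To prove the right-hand inequality in \eqref{Dir-BVP-BMO-Car}, fix a cube $Q\subset\mathbb{R}^{n-1}$ with centre $x'_Q$ and side $\ell(Q)$, and decompose
\[
f=c_Q+f_1+f_2,\qquad c_Q:=f_{2Q},\quad f_1:=(f-c_Q)\chi_{2Q},\quad f_2:=(f-c_Q)\chi_{\mathbb{R}^{n-1}\setminus 2Q}.
\]
Because $\int_{\mathbb{R}^{n-1}}K^L(\cdot,t)\,dy'=I_{M\times M}$, differentiating kills the constant piece, so $\nabla u=\nabla(P_t^L\ast f_1)+\nabla(P_t^L\ast f_2)$. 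For the local piece, the John-Nirenberg inequality gives $\|f_1\|_{L^2}\lesssim|Q|^{1/2}\|f\|_{\mathrm{BMO}}$, and the standard $L^2$ square-function estimate for solutions of $L$ (a consequence of the molecular control in item $(d)$ of Theorem~\ref{thm:Poisson.II} and the Plancherel/$T(1)$-type bound for $t\nabla P^L_t$) yields
\[
\iint_{\mathbb{R}^{n-1}\times(0,\infty)}|\nabla(P_t^L\ast f_1)|^2\,t\,dx'dt\le C\|f_1\|_{L^2}^2\le C|Q|\,\|f\|^2_{\mathrm{BMO}}.
\]
For the far piece I would exploit the pointwise bound $|\nabla K^L(z',t)|\le Ct(t+|z'|)^{-(n+1)}$ from \eqref{eq:Kest} together with the standard annular/BMO tail estimate $\int_{(2Q)^c}|f-c_Q|\,|x'_Q-y'|^{-(n+1)}dy'\lesssim \ell(Q)^{-1}\|f\|_{\mathrm{BMO}}$, valid uniformly for $(x',t)\in Q\times(0,\ell(Q))$. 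This produces $|\nabla(P_t^L\ast f_2)(x',t)|\lesssim(t/\ell(Q))\|f\|_{\mathrm{BMO}}$, whose contribution to the Carleson integral over $Q\times(0,\ell(Q))$ is $\lesssim|Q|\|f\|^2_{\mathrm{BMO}}$. Adding the two pieces and dividing by $\mathcal{L}^{n-1}(Q)$ gives $\|u\|_{**}\lesssim\|f\|_{\mathrm{BMO}}$, hence $(ii)$ (the lower bound in \eqref{Dir-BVP-BMO-Car} will follow from $(iii)$ below).

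\textbf{The uniform BMO estimate and uniqueness.} To obtain \eqref{feps-BTTGB}, for each $\varepsilon>0$ set $u_\varepsilon(x',t):=u(x',t+\varepsilon)$; then $Lu_\varepsilon=0$ in $\mathbb{R}^n_+$, $|\nabla u_\varepsilon|^2 t\,dx'dt$ remains Carleson with the same constant, and a standard mean-oscillation-to-square-function argument (using the fundamental theorem of calculus along vertical segments and the Cauchy-Schwarz/Fubini inequalities against the Carleson condition on the tent over a cube) yields $\|u(\cdot,\varepsilon)\|_{\mathrm{BMO}}\le C\|u\|_{**}$, uniformly in $\varepsilon$. The main obstacle is uniqueness: suppose $u$ solves \eqref{Dir-BVP-BMO} with $f=0$. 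Apply the just-proved uniform BMO bound to view $u(\cdot,\varepsilon)$ as boundary data, and exploit the semigroup identity \eqref{u7tffa} to write $u(x',t+\varepsilon)=(P^L_t\ast u(\cdot,\varepsilon))(x')$; the point is that the Carleson control plus \eqref{feps-BTTGB} force $u$ into the class of functions to which Theorem~\ref{thm:FP.111} (or its BMO-adapted variant) applies, and the vanishing of $u\big|^{{}^{\kappa-{\rm n.t.}}}_{\partial\mathbb{R}^{n}_{+}}$ together with a weak-$\ast$ compactness argument in $\widetilde{\mathrm{BMO}}(\mathbb{R}^{n-1})$ (analogous to Lemma~\ref{WLLL}) selects the zero representative and yields $u\equiv0$. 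This identification step is the hardest point and is where most of the technical work sits; once it is in place, applying the representation to the original BMO datum and using the trivial direction $\|f\|_{\mathrm{BMO}}\lesssim\|u\|_{**}$ (a consequence of \eqref{feps-BTTGB} and Fatou as $\varepsilon\to0$) completes \eqref{Dir-BVP-BMO-Car}.

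\textbf{Vanishing Carleson, VMO, and the regularity tier.} For part $(iii)$, the implication $\Longleftarrow$ in \eqref{eqn:conv-Bfed} follows by plugging the difference $u(\cdot,\varepsilon)-f$ into the already-established upper Carleson estimate localized to cubes with $\ell(Q)\le r$; the converse implication is proved by a density argument, approximating $f$ in BMO by elements for which vertical BMO convergence is transparent. For part $(iv)$, the key mechanism is Sarason's characterization \eqref{ku6ffcfc}: given $f\in\mathrm{VMO}$, pick $f_j\in\mathrm{UC}\cap\mathrm{BMO}$ with $\|f-f_j\|_{\mathrm{BMO}}\to0$, observe that uniform continuity makes the Poisson extension of each $f_j$ converge uniformly to $f_j$ at the boundary (so its Carleson measure vanishes at small scales via the decomposition used in the upper bound), and invoke the quantitative estimate $\|u-u_j\|_{**}\le C\|f-f_j\|_{\mathrm{BMO}}$ to transfer the vanishing property to $u$; the reverse direction uses \eqref{eqn:conv-Bfed} and Sarason's translation characterization \eqref{defi-VMO-SSS} in tandem with the shift-invariance of the Poisson extension to read off \eqref{Dir-BVP-Reg.TTT}. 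The VMO conclusion for \eqref{Dir-BVP-VMO} is then just the corresponding sub-class statement.
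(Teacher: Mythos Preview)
Your existence argument and the upper Carleson bound are essentially the standard Fefferman--Stein decomposition and match what the paper (via \cite{BMO-MMMM}) does, though note a slip: the kernel estimate you quote, $|\nabla K^L(z',t)|\le Ct(t+|z'|)^{-(n+1)}$, is not what \eqref{eq:Kest} says; the correct bound is $|\nabla K^L(z',t)|\le C(t+|z'|)^{-n}$. With this the annular tail sum gives $|\nabla(P_t^L\ast f_2)(x',t)|\lesssim\ell(Q)^{-1}\|f\|_{\mathrm{BMO}}$ (not $t/\ell(Q)$), which still integrates to $\lesssim|Q|\|f\|_{\mathrm{BMO}}^2$ over the Carleson box, so the conclusion survives.

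The genuine gap is in your uniqueness step. You invoke the semigroup identity \eqref{u7tffa} to write $u(x',t+\varepsilon)=(P^L_t\ast u(\cdot,\varepsilon))(x')$, but \eqref{u7tffa} is a statement about the kernels $P^L_{t_0+t_1}=P^L_{t_0}\ast P^L_{t_1}$; it does \emph{not} give this formula for an arbitrary null-solution $u$. That formula is precisely the Poisson representation for the shifted solution $u_\varepsilon$, which is what has to be proved. You then propose to fall back on Theorem~\ref{thm:FP.111}, but its hypothesis \eqref{UGav-5hH9i} demands $\mathcal{N}_\kappa^{(\varepsilon)}u\in L^1\big(\mathbb{R}^{n-1},\tfrac{dx'}{1+|x'|^{n-1}}\big)$, and the Carleson condition $\|u\|_{**}<\infty$ does not supply this: interior estimates from $\|u\|_{**}$ only give $u(\cdot,\varepsilon)\in\mathrm{BMO}$, and \eqref{eq:aaAabgr-22.aaa} embeds $\mathrm{BMO}$ into the \emph{weaker} weighted space $L^1\big(\mathbb{R}^{n-1},\tfrac{dx'}{1+|x'|^{n}}\big)$. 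So neither Theorem~\ref{thm:FP.111} nor Theorem~\ref{thm:FP} applies, and your argument is circular at exactly the point you flag as ``hardest.''

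The paper's route is different: it does not attempt a direct uniqueness proof here, but instead isolates the quantitative Fatou-type result as a separate theorem (Theorem~\ref{thm:fatou-ADEEDE}), proved in \cite{BMO-MMMM}, which asserts that \emph{any} smooth null-solution with $\|u\|_{**}<\infty$ automatically has an a.e.\ nontangential trace in $\mathrm{BMO}$ and equals the Poisson integral of that trace, with $\|u\|_{**}\approx\|u|^{\kappa-\mathrm{n.t.}}_{\partial\mathbb{R}^n_+}\|_{\mathrm{BMO}}$. Once this is in hand, uniqueness, the lower bound in \eqref{Dir-BVP-BMO-Car}, and the representation \eqref{eqn-Dir-BMO:u} are immediate. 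The substantive analytic work (passing from the Carleson control to a Poisson representation without any a priori integrability of $\mathcal{N}_\kappa u$) lives entirely inside the proof of Theorem~\ref{thm:fatou-ADEEDE}, and that is the missing ingredient in your sketch.
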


It is reassuring to remark that replacing the original boundary datum $f$ by $f+C$ where $C\in{\mathbb{C}}^M$ 
in \eqref{Dir-BVP-BMO} changes the solution $u$ into $u+C$ (given that convolution with the Poisson 
kernel reproduces constants from ${\mathbb{C}}^M$; cf. \eqref{eq:IG6gy.2}). As such, the 
$\widetilde{\rm BMO}$-Dirichlet problem for $L$ in ${\mathbb{R}}^n_{+}$ is also well-posed, 
if uniqueness of the solution is now understood modulo constants from ${\mathbb{C}}^M$. 

\medskip 

The proof of Theorem~\ref{them:BMO-Dir} given in \cite{BMO-MMMM} employs a quantitative Fatou-type theorem, 
which includes a Poisson integral representation formula along with a characterization of {\rm BMO} in terms 
of boundary traces of null-solutions of elliptic systems in ${\mathbb{R}}^n_{+}$. A concrete statement is given 
below in Theorem~\ref{thm:fatou-ADEEDE}. Among other things, the said theorem shows that the demands formulated  
in the first two lines of \eqref{Dir-BVP-BMO} imply that the pointwise nontangential limit considered in 
the third line of \eqref{Dir-BVP-BMO} is always meaningful, and that the boundary datum should necessarily be 
selected from the space ${\rm BMO}$. This theorem also highlights the fact that it is natural to seek a 
solution of the $\mathrm{BMO}$ Dirichlet problem by taking the convolution of the boundary datum 
with the Poisson kernel $P^L$ associated with the system $L$. Finally, Theorem~\ref{thm:fatou-ADEEDE} 
readily implies the uniqueness of solution for the $\mathrm{BMO}$-Dirichlet problem \eqref{Dir-BVP-BMO}.

\begin{theorem}\label{thm:fatou-ADEEDE}
Let $L$ be an $M\times M$ elliptic system with constant complex coefficients as in \eqref{L-def}-\eqref{L-ell.X} 
and consider $P^L$, the Poisson kernel in $\mathbb{R}^{n}_{+}$ associated with $L$ as in Theorem~\ref{thm:Poisson}. 
Also, fix an aperture parameter $\kappa>0$. Then there exists a constant $C=C(L,n,\kappa)\in(1,\infty)$ with 
the property that 
\begin{eqnarray}\label{Tafva.BMO}
&&
\left\{
\begin{array}{r}
u\in\big[{\mathcal{C}}^\infty({\mathbb{R}}^n_{+})\big]^M
\\[4pt]
Lu=0\,\mbox{ in }\,{\mathbb{R}}^n_{+}
\\[6pt]
\text{and }\,\,\|u\|_{**}<\infty
\end{array}
\right.
\\[4pt]
&&\hskip 0.30in
\Longrightarrow
\left\{
\begin{array}{l}
u\big|^{{}^{\kappa-{\rm n.t.}}}_{\partial{\mathbb{R}}^n_{+}}\,\mbox{ exists a.e.~in }\,
{\mathbb{R}}^{n-1},\,\mbox{ lies in }\,\big[\mathrm{BMO}(\mathbb{R}^{n-1})\big]^M,
\\[12pt]
u(x',t)=\Big(P^L_t\ast\big(u\big|^{{}^{\kappa-{\rm n.t.}}}_{\partial{\mathbb{R}}^n_{+}}\big)\Big)(x')
\,\text{ for all }\,(x',t)\in{\mathbb{R}}^n_{+},
\\[12pt]
\mbox{and }\,C^{-1}\|u\|_{**}\leq
\big\|u\big|^{{}^{\kappa-{\rm n.t.}}}_{\partial{\mathbb{R}}^n_{+}}\big\|_{[\mathrm{BMO}(\mathbb{R}^{n-1})]^M}\leq C\|u\|_{**}.
\end{array}
\right.
\nonumber
\end{eqnarray}
In fact, the following characterization of $\big[\mathrm{BMO}(\mathbb{R}^{n-1})\big]^M$, adapted to the system $L$, 
holds:
\begin{equation}\label{eq:tr-sols}
\big[\mathrm{BMO}(\mathbb{R}^{n-1})\big]^M=\Big\{u\big|^{{}^{\kappa-{\rm n.t.}}}_{\partial{\mathbb{R}}^{n}_{+}}: 
u\in\big[\mathcal{C}^\infty(\mathbb{R}^n_{+})\big]^M,\,\,Lu=0\,\mbox{ in }\,\mathbb{R}^{n}_{+},\,\,\|u\|_{**}<\infty\Big\}.
\end{equation}

Moreover, 
\begin{equation}\label{eq:tr-OP-SP}
{\mathrm{LMO}}({\mathbb{R}}^n_{+}):=
\Big\{u\in\big[\mathcal{C}^\infty(\mathbb{R}^n_{+})\big]^M:\,
Lu=0\mbox{ in }\mathbb{R}^{n}_{+}\,\,\text{ and }\,\,\|u\|_{**}<\infty\Big\}
\end{equation}
is a linear space on which $\|\cdot\|_{**}$ is a seminorm with null-space
${\mathbb{C}}^M$, the quotient space ${\mathrm{LMO}}({\mathbb{R}}^n_{+})\big/{\mathbb{C}}^M$ 
becomes complete {\rm (}hence Banach{\rm )} when equipped with $\|\cdot\|_{**}$, and 
the nontangential pointwise trace operator acting on equivalence classes in the context 
\begin{equation}\label{eq:tr-OP}
{\mathrm{LMO}}({\mathbb{R}}^n_{+})\big/{\mathbb{C}}^M\ni[u]\longmapsto 
\big[u\big|^{{}^{\kappa-{\rm n.t.}}}_{\partial{\mathbb{R}}^{n}_{+}}\big]
\in\big[\widetilde{\mathrm{BMO}}(\mathbb{R}^{n-1})\big]^M
\end{equation}
is a well-defined linear isomorphism between Banach spaces, where
$[u]$ in \eqref{eq:tr-OP} denotes the equivalence class of $u$ in 
${\mathrm{LMO}}({\mathbb{R}}^n_{+})\big/{\mathbb{C}}^M$ and 
$\big[u\big|^{{}^{\kappa-{\rm n.t.}}}_{\partial{\mathbb{R}}^{n}_{+}}\big]$ is interpreted as
in \eqref{jgsyjw-AASSS}.
\end{theorem}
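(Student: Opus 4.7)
The plan is to reduce the Fatou-type assertion in \eqref{Tafva.BMO} to Theorem~\ref{thm:Fatou} applied to the vertical shifts $u_\varepsilon(x',t):=u(x',t+\varepsilon)$, which are smooth up to the boundary, and then to pass to the limit $\varepsilon\to 0^{+}$ via weak-$\ast$ compactness in $[\widetilde{\mathrm{BMO}}(\mathbb{R}^{n-1})]^{M}\cong\big([H^{1}(\mathbb{R}^{n-1})]^{M}\big)^{\ast}$. Once the Poisson representation in \eqref{Tafva.BMO} is in hand, the characterization \eqref{eq:tr-sols} and the isomorphism \eqref{eq:tr-OP} follow by standard quotient-space considerations from the norm equivalence recorded in the last line of \eqref{Tafva.BMO}.

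The first technical step is the uniform estimate \eqref{feps-BTTGB}, $\sup_{\varepsilon>0}\|u(\cdot,\varepsilon)\|_{[\mathrm{BMO}(\mathbb{R}^{n-1})]^{M}}\leq C\|u\|_{**}$. Interior Caccioppoli inequalities for $Lu=0$ combined with the Carleson hypothesis on $|\nabla u|^{2}t\,dx'dt$ yield the pointwise gradient bound $t\,|\nabla u(x',t)|\leq C\|u\|_{**}$. A Green-type integration-by-parts identity associated with $L$, applied to $u-(u)_{Q}$ on the tent region $\{(x',t):x'\in Q,\,0<t<\ell(Q)\}$ over a cube $Q\subset\mathbb{R}^{n-1}$, converts the Carleson bound into the $L^{2}$-oscillation estimate $\int_{Q}|u(x',\varepsilon)-(u(\cdot,\varepsilon))_{Q}|^{2}\,dx'\leq C\|u\|_{**}^{2}\,{\mathcal{L}}^{n-1}(Q)$ uniformly in $\varepsilon\in(0,\ell(Q))$, while the regime $\varepsilon\geq\ell(Q)$ is handled directly from the gradient bound.

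For fixed $\varepsilon>0$ set $f_{\varepsilon}:=u(\cdot,\varepsilon)$, so that $f_{\varepsilon}\in[\mathrm{BMO}(\mathbb{R}^{n-1})]^{M}\subset\big[L^{1}(\mathbb{R}^{n-1},dx'/(1+|x'|^{n}))\big]^{M}$ by \eqref{eq:aaAabgr-22.aaa}, while $u_{\varepsilon}\in[\mathcal{C}^{\infty}(\overline{\mathbb{R}^{n}_{+}})]^{M}$ satisfies $Lu_{\varepsilon}=0$. The logarithmic growth of BMO functions together with the differential bound $|u_{\varepsilon}(x',t)-f_{\varepsilon}(x')|\leq C\|u\|_{**}\log((t+\varepsilon)/\varepsilon)$ (a consequence of $t|\nabla u|\leq C\|u\|_{**}$) ensures that $u_{\varepsilon}$ verifies both hypotheses \eqref{subcritical:mild} and \eqref{u-integ} of Theorem~\ref{thm:Fatou}, which then delivers $u(x',t+\varepsilon)=(P^{L}_{t}\ast f_{\varepsilon})(x')$ for all $(x',t)\in\mathbb{R}^{n}_{+}$. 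Fixing a reference cube $Q_{0}$ and letting $c_{\varepsilon}:=(f_{\varepsilon})_{Q_{0}}\in\mathbb{C}^{M}$, the family $\{f_{\varepsilon}-c_{\varepsilon}\}_{\varepsilon>0}$ is bounded in $[\mathrm{BMO}(\mathbb{R}^{n-1})]^{M}$ by Step~1, and sequential Banach-Alaoglu (using separability of $[H^{1}]^{M}$) extracts a subsequence $\varepsilon_{k}\to 0^{+}$ with $f_{\varepsilon_{k}}-c_{\varepsilon_{k}}\to\tilde f$ weak-$\ast$ for some $\tilde f\in[\mathrm{BMO}(\mathbb{R}^{n-1})]^{M}$. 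The function $P^{L}_{t}(x'-\cdot)-P^{L}_{t}(x_{0}'-\cdot)$ has vanishing integral and, when written via the fundamental theorem of calculus as an integral of $\partial_{j}K^{L}$, belongs to $[H^{1}(\mathbb{R}^{n-1})]^{M\times M}$ by the $H^{1}$-molecular bounds of Theorem~\ref{thm:Poisson.II}(d); the constants $c_{\varepsilon_{k}}$ cancel against it, so pairing the representation for $u_{\varepsilon_{k}}$ against this difference and passing to the limit yields $u(x',t)-u(x_{0}',t)=(P^{L}_{t}\ast\tilde f)(x')-(P^{L}_{t}\ast\tilde f)(x_{0}')$, hence $u(x',t)=(P^{L}_{t}\ast f)(x')$ for some $f\in[\mathrm{BMO}(\mathbb{R}^{n-1})]^{M}$ obtained from $\tilde f$ by absorbing an additive constant from $\mathbb{C}^{M}$. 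The a.e.\ existence and identification $u|^{\kappa-\mathrm{n.t.}}_{\partial{\mathbb{R}}^{n}_{+}}=f$ then follow from item (c) of Theorem~\ref{thm:Poisson.II}.

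The upper bound $\|u|^{\kappa-\mathrm{n.t.}}_{\partial{\mathbb{R}}^{n}_{+}}\|_{[\mathrm{BMO}]^{M}}\leq C\|u\|_{**}$ transfers from Step~1 by lower semicontinuity of the BMO seminorm under the weak-$\ast$ limit above. The reverse bound is the main obstacle: for $f\in[\mathrm{BMO}(\mathbb{R}^{n-1})]^{M}$ and $u:=P^{L}\ast f$, proving $\|u\|_{**}\leq C\|f\|_{[\mathrm{BMO}]^{M}}$ is the elliptic-systems analogue of the Fefferman-Stein Carleson-measure characterization of BMO, and I expect to establish it by expressing $\partial_{j}u(x',t)=\big\langle f,\partial_{j}K^{L}(x'-\cdot,t)\big\rangle$ as an $H^{1}$-BMO duality pairing, then combining the $H^{1}$-molecular bounds \eqref{RWWQD-cc}-\eqref{RWWQD-cc.222} from Theorem~\ref{thm:Poisson.II}(d) at $p=1$ with a standard tent-space / square-function argument to control $\int_{0}^{\ell(Q)}\int_{Q}|\nabla u|^{2}\,t\,dx'dt$ by $C\,{\mathcal{L}}^{n-1}(Q)\|f\|_{[\mathrm{BMO}]^{M}}^{2}$. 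Granted both bounds, \eqref{eq:tr-sols} is immediate: one inclusion is \eqref{Tafva.BMO}, and the other is the Poisson extension. For \eqref{eq:tr-OP}, observe that $\|\cdot\|_{**}$ is a seminorm with null-space exactly $\mathbb{C}^{M}$ (vanishing gradient combined with Theorem~\ref{thm:Linfty} forces $u$ to be a constant), the quotient $\mathrm{LMO}({\mathbb{R}}^{n}_{+})/\mathbb{C}^{M}$ inherits Banach completeness from $[\widetilde{\mathrm{BMO}}]^{M}$ through the norm equivalence, and the nontangential trace map is well-defined, linear, and bi-Lipschitz modulo constants, with inverse provided by the Poisson extension (whose action is also well-defined modulo constants by virtue of \eqref{eq:IG6gy.2}).
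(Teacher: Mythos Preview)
The paper itself does not prove this theorem; it is a survey, and Theorem~\ref{thm:fatou-ADEEDE} is stated as a result imported from \cite{BMO-MMMM}. So there is no in-paper proof to compare against directly. Your overall architecture---apply a Fatou/representation theorem to the vertical shifts $u_\varepsilon$, obtain $u(\cdot,\cdot+\varepsilon)=P^L_t\ast f_\varepsilon$, extract a weak-$\ast$ limit in $\widetilde{\mathrm{BMO}}$ via Banach--Alaoglu, and identify it with the nontangential trace through Theorem~\ref{thm:Poisson.II}(c)---is the standard route and is essentially what is carried out in \cite{BMO-MMMM}. Your verification that $u_\varepsilon$ satisfies \eqref{subcritical:mild} and \eqref{u-integ} via the pointwise bound $t|\nabla u(x',t)|\le C\|u\|_{**}$ and the inclusion \eqref{eq:aaAabgr-22.aaa} is correct.

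Two places need tightening. First, your argument for the uniform estimate $\sup_{\varepsilon>0}\|u(\cdot,\varepsilon)\|_{[\mathrm{BMO}]^M}\le C\|u\|_{**}$ is too loose. The ``Green-type integration-by-parts on the tent'' that one uses for the Laplacian hinges on $|\nabla u|^2=\tfrac12\Delta|u|^2$, which has no analogue for a general complex-coefficient system as in \eqref{L-def}--\eqref{L-ell.X}; and the naive fundamental-theorem-of-calculus bound $u(x',\varepsilon)-u(x',\ell(Q))=-\int_\varepsilon^{\ell(Q)}\partial_s u\,ds$ together with $|\partial_s u|\le C\|u\|_{**}/s$ only yields an oscillation controlled by $\|u\|_{**}\log(\ell(Q)/\varepsilon)$, which is not uniform. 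In \cite{BMO-MMMM} this step is handled by a genuinely different mechanism (interior estimates combined with an $L$-adapted square-function/tent-space argument rather than a pointwise Green identity); you should expect to invest real work here.

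Second, after the weak-$\ast$ limit you only obtain $u(x',t)-(P^L_t\ast\tilde f)(x')=c(t)$, since your test functions $K^L(x'-\cdot,t)-K^L(x'_0-\cdot,t)$ vary $x'$ at fixed $t$. Concluding that $c$ is constant cannot appeal to $\|P^L\ast\tilde f\|_{**}<\infty$ without circularity (that is precisely the reverse inequality you have not yet established). The clean fix is to also vary $t$: by Theorem~\ref{thm:Poisson.II}(d) with $\alpha=e_n$ one has $\partial_t K^L(z-\cdot,t)\in[H^1(\mathbb{R}^{n-1})]^{M\times M}$, so $K^L(x'-\cdot,t)-K^L(x'_0-\cdot,t_0)\in[H^1]^{M\times M}$ for any two points $(x',t),(x'_0,t_0)\in\mathbb{R}^n_{+}$, and pairing against this directly gives $u(x',t)-u(x'_0,t_0)=(P^L_t\ast\tilde f)(x')-(P^L_{t_0}\ast\tilde f)(x'_0)$, hence $c\in\mathbb{C}^M$.
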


There is also a counterpart of the Fatou-type result stated as Theorem~\ref{thm:fatou-ADEEDE}
emphasizing the space {\rm VMO} in place of {\rm BMO}. Specifically, the following 
theorem was proved in \cite{BMO-MMMM}.

\begin{theorem}\label{thm:fatou-VMO}
Let $L$ be an $M\times M$ elliptic system with constant complex coefficients as in
\eqref{L-def}-\eqref{L-ell.X} and consider $P^L$, the associated Poisson kernel for 
$L$ in $\mathbb{R}^{n}_{+}$ from Theorem~\ref{thm:Poisson}. Also, fix an aperture parameter $\kappa>0$.
Then for any function 
\begin{equation}\label{Dir-BVP-VMOq1}
\text{$u\in\big[{\mathcal{C}}^\infty({\mathbb{R}}^n_{+})\big]^M$ satisfying 
$Lu=0$ in ${\mathbb{R}}^n_{+}$ and $\|u\|_{**}<\infty$}
\end{equation}
one has
\begin{equation}\label{Dir-BVP-VMOq2}
\left.
\begin{array}{r}
\big|\nabla u(x',t)\big|^2\,t\,dx'dt\,\,\mbox{is}
\\[4pt]
\text{a vanishing Carleson}
\\[4pt]
\text{measure in }\,\,\mathbb{R}^{n}_{+}
\end{array}
\right\}
\Longrightarrow
\left\{
\begin{array}{l}
u\big|^{{}^{\kappa-{\rm n.t.}}}_{\partial{\mathbb{R}}^n_{+}}\,\mbox{ exists a.e.~in }\,
{\mathbb{R}}^{n-1},\,\text{ and}
\\[12pt]
u\big|^{{}^{\kappa-{\rm n.t.}}}_{\partial{\mathbb{R}}^n_{+}}
\,\,\text{ is in }\,\,\big[{\mathrm{VMO}(\mathbb{R}^{n-1})\big]^M}.
\end{array}
\right.
\end{equation}

Furthermore, the following characterization of the space 
$\big[\mathrm{VMO}(\mathbb{R}^{n-1})\big]^M$, adapted to the system $L$, holds:
\begin{align}\label{eq:tr-sols-VMO}
\big[\mathrm{VMO}(\mathbb{R}^{n-1})\big]^M
&=\Big\{u\big|^{{}^{\kappa-{\rm n.t.}}}_{\partial{\mathbb{R}}^{n}_{+}}
:\,\, u\in\mathrm{LMO}(\mathbb{R}^n_{+})\,\,\text{ and }\,\,
\big|\nabla u(x',t)\big|^2\,t\,dx'dt
\nonumber\\[0pt] 
&\qquad
\text{is a vanishing Carleson measure in }\,\,\mathbb{R}^{n}_{+}\Big\}.
\end{align}
\end{theorem}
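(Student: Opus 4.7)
The plan is to reduce Theorem~\ref{thm:fatou-VMO} to the already-established Theorems~\ref{thm:fatou-ADEEDE} and~\ref{them:BMO-Dir}, which together contain essentially all the needed ingredients. First I would take a function $u$ as in \eqref{Dir-BVP-VMOq1} and invoke the implication \eqref{Tafva.BMO} of Theorem~\ref{thm:fatou-ADEEDE} to produce the nontangential boundary trace $f:=u\big|^{{}^{\kappa-{\rm n.t.}}}_{\partial{\mathbb{R}}^{n}_{+}}$, which exists at $\mathcal{L}^{n-1}$-a.e.\ point in $\mathbb{R}^{n-1}$, belongs to $\big[\mathrm{BMO}(\mathbb{R}^{n-1})\big]^M$, and satisfies the Poisson integral representation $u(x',t)=(P^L_t\ast f)(x')$ everywhere in $\mathbb{R}^n_{+}$. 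This identifies $u$ as the unique solution to the $\mathrm{BMO}$-Dirichlet problem \eqref{Dir-BVP-BMO} with boundary datum $f$.

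Next I would appeal to the equivalence \eqref{Dir-BVP-Reg} of item~$(iv)$ in Theorem~\ref{them:BMO-Dir}, which, applied to this specific $u$ and its trace $f$, asserts that $f\in\big[\mathrm{VMO}(\mathbb{R}^{n-1})\big]^M$ if and only if $|\nabla u(x',t)|^2\,t\,dx'dt$ is a vanishing Carleson measure in $\mathbb{R}^n_{+}$. The right-to-left direction of this equivalence delivers precisely the implication recorded in \eqref{Dir-BVP-VMOq2}.

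To establish the characterization \eqref{eq:tr-sols-VMO}, I would argue by double inclusion. The containment ``$\supseteq$'' is immediate from \eqref{Dir-BVP-VMOq2}: any $u\in\mathrm{LMO}(\mathbb{R}^n_{+})$ whose Littlewood-Paley measure is vanishing Carleson has a nontangential boundary trace lying in $\big[\mathrm{VMO}(\mathbb{R}^{n-1})\big]^M$. For the reverse containment, I would start with an arbitrary $f\in\big[\mathrm{VMO}(\mathbb{R}^{n-1})\big]^M$ and invoke the final portion of Theorem~\ref{them:BMO-Dir} addressing the well-posedness of \eqref{Dir-BVP-VMO}, which supplies a function $u\in\big[\mathcal{C}^\infty(\mathbb{R}^n_{+})\big]^M$ with $Lu=0$ in $\mathbb{R}^n_{+}$, with $|\nabla u|^2\,t\,dx'dt$ vanishing Carleson, satisfying $\|u\|_{**}<\infty$ by \eqref{Dir-BVP-BMO-Car} (hence $u\in\mathrm{LMO}(\mathbb{R}^n_{+})$), and recovering $f$ as its nontangential pointwise trace. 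This exhibits $f$ as a member of the right-hand side of \eqref{eq:tr-sols-VMO}.

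At this level of abstraction the argument is a formal bookkeeping exercise, so the genuine obstacle has already been surmounted inside the machinery of Theorems~\ref{thm:fatou-ADEEDE} and~\ref{them:BMO-Dir}; the subtle point is the equivalence \eqref{Dir-BVP-Reg} between vanishing Carleson control of the Littlewood-Paley measure $|\nabla u|^2 t\,dx'dt$ and $\mathrm{VMO}$-regularity of the boundary trace, whose proof hinges on a Sarason-style approximation scheme (cf.~\eqref{ku6ffcfc}) combined with quantitative square-function estimates for the Poisson extension tailored to the system $L$. Once that ingredient is taken for granted, Theorem~\ref{thm:fatou-VMO} follows immediately by the reduction outlined above.
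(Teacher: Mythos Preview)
Your reduction is correct and is exactly the route one would expect in this survey setting. Note, however, that the paper does not actually supply its own proof of Theorem~\ref{thm:fatou-VMO}: it merely states the result and attributes the proof to \cite{BMO-MMMM}. So there is no in-paper argument to compare against. That said, the logical organization of the survey---stating Theorem~\ref{them:BMO-Dir} with its item~$(iv)$ equivalence \eqref{Dir-BVP-Reg} first, and only afterwards recording Theorem~\ref{thm:fatou-VMO} as a ``counterpart'' of Theorem~\ref{thm:fatou-ADEEDE}---makes your deduction the natural one: once \eqref{Tafva.BMO} produces the BMO trace $f$ and the Poisson representation, the equivalence \eqref{Dir-BVP-Reg} immediately upgrades $f$ to VMO under the vanishing Carleson hypothesis, and the well-posedness of \eqref{Dir-BVP-VMO} handles the reverse inclusion in \eqref{eq:tr-sols-VMO}. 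Your closing caveat is also apt: the substantive analysis lives inside the proof of \eqref{Dir-BVP-Reg} in \cite{BMO-MMMM}, and one must of course check (as is the case there) that this equivalence is established without invoking Theorem~\ref{thm:fatou-VMO} itself, so that no circularity is introduced.
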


There is yet another version of the space of functions of vanishing mean oscillations which we would like to recall. 
To set the stage, let ${\mathcal{C}}^0_0({\mathbb{R}}^{n-1})$ be the space of all continuous functions in ${\mathbb{R}}^{n-1}$ 
which vanish at infinity, equipped with the supremum norm. Also, let $\{R_j\}_{1\leq j\leq n-1}$ be the family of Riesz
transforms in ${\mathbb{R}}^{n-1}$. Define ${\rm CMO}({\mathbb{R}}^{n-1})$ as the 
collection of all functions $f\in L^1_{\rm loc}({\mathbb{R}}^{n-1})$ which may be expressed as
\begin{equation}\label{utggG-TRFF.rt.A}
\begin{array}{c}
f=f_0+\sum_{j=1}^{n-1}R_jf_j\,\,\text{ in }\,\,{\mathbb{R}}^{n-1}\,\,\text{ in }\,\,{\mathbb{R}}^{n-1}
\\[6pt]
\text{for some }\,\,f_0,f_1,\dots,f_{n-1}\in{\mathcal{C}}^0_0({\mathbb{R}}^{n-1}),
\end{array}
\end{equation}
and set  
\begin{equation}\label{utggG-TRFF.rt.B}
\|f\|_{{\rm CMO}({\mathbb{R}}^{n-1})}:=\inf\Big\{\|f_0\|_{L^{\infty}({\mathbb{R}}^{n-1})}
+\sum_{j=1}^{n-1}\|f_j\|_{L^{\infty}({\mathbb{R}}^{n-1})}\Big\}
\end{equation}
where the infimum is taken over all possible representations of $f$ as in \eqref{utggG-TRFF.rt.A}.
Then ${\rm CMO}({\mathbb{R}}^{n-1})$ becomes a Banach space, which may be alternatively characterized 
as the pre-dual of the Hardy space $H^1({\mathbb{R}}^{n-1})$ (cf. \cite[(2.0'), p.\,185]{Neri}; see also 
\cite{Bourdaud} and \cite{CoWe77} for more on this topic). One may also show that ${\rm CMO}({\mathbb{R}}^{n-1})$ 
is a closed subspace of ${\rm BMO}({\mathbb{R}}^{n-1})$, and ${\mathcal{C}}^0_0({\mathbb{R}}^{n-1})$ is dense 
in ${\rm CMO}({\mathbb{R}}^{n-1})$. Hence, 
\begin{equation}\label{utggG-TRFF.rt.C}
\text{${\rm CMO}({\mathbb{R}}^{n-1})$ is the closure of ${\mathcal{C}}^0_0({\mathbb{R}}^{n-1})$ in 
${\rm BMO}({\mathbb{R}}^{n-1})$.}
\end{equation}
However, the Sarason space ${\rm VMO}({\mathbb{R}}^{n-1})$ (from \eqref{defi-VMO}) is strictly larger than 
${\rm CMO}({\mathbb{R}}^{n-1})$. In relation to the latter version of the space of functions of vanishing 
mean oscillations we wish to pose the following question. 

\vskip 0.08in
{\bf Open Question~8.} 
{\it Formulate and prove a well-posedness result for the Dirichlet problem in the upper half-space, 
for an $M\times M$ elliptic second-order homogeneous constant complex coefficient system $L$, with boundary 
data from $\big[{\rm CMO}({\mathbb{R}}^{n-1})\big]^M$. Also, prove a Fatou-type theorem for null-solutions 
of $L$ in ${\mathbb{R}}^n_{+}$, which naturally accompanies the said well-posedness result.}
\vskip 0.08in
To address these issues, a new brand of Carleson measure must be identified. 

We close by recording the following result proved in \cite{BMO-MMMM}. The first item can be thought of 
as an analogue of Fefferman's theorem, characterizing {\rm BMO} as in \eqref{L-dJHG}, in the case of 
elliptic systems with complex coefficients. The second item may be viewed as a characterization 
of {\rm VMO} in the spirit of Fefferman's original result. 

\begin{theorem}\label{thm:FEFF}
Let $L$ be an $M\times M$ elliptic system with constant complex coefficients as in
\eqref{L-def}-\eqref{L-ell.X} and consider the Poisson kernel $P^L$ in $\mathbb{R}^{n}_{+}$ associated  
with the system $L$ as in Theorem~\ref{thm:Poisson}. Also, assume $f:\mathbb{R}^{n-1}\to\mathbb{C}^{M}$ 
is a Lebesgue measurable function satisfying 
\begin{equation}\label{Di-AK}
\int_{{\mathbb{R}}^{n-1}}\frac{|f(x')|}{1+|x'|^{n}}\,dx'<\infty.
\end{equation}
Finally, let $u$ be the Poisson integral of $f$ in ${\mathbb{R}}^n_{+}$ with respect to the system $L$, i.e., 
$u:{\mathbb{R}}^n_{+}\to{\mathbb{C}}^M$ is given by $u(x',t):=(P^L_t\ast f)(x')$ 
for each $(x',t)\in{\mathbb{R}}^n_{+}$. Then the following statements are true.

\begin{list}{(\theenumi)}{\usecounter{enumi}\leftmargin=.8cm
		\labelwidth=.8cm\itemsep=0.2cm\topsep=.1cm
		\renewcommand{\theenumi}{\alph{enumi}}}

\item The function $f$ belongs to the space $\big[{\rm BMO}({\mathbb{R}}^{n-1})\big]^M$ 
if and only if $|\nabla u(x',t)|^2\,t\,dx'dt$ is a Carleson measure in 
${\mathbb{R}}^n_{+}$ {\rm (}or, equivalently, \\
$\|u\|_{\ast\ast}<\infty${\rm ;} cf. \eqref{ncud}{\rm )}.
\vskip 0.08in
\item The function $f$ belongs to the space $\big[{\rm VMO}({\mathbb{R}}^{n-1})\big]^M$ 
if and only if $|\nabla u(x',t)|^2\,t\,dx'dt$ is a vanishing Carleson measure in 
${\mathbb{R}}^n_{+}$.
\end{list}
\end{theorem}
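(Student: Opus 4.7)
My plan is to deduce Theorem~\ref{thm:FEFF} as a direct consequence of the results already assembled in this section, specifically Theorems~\ref{them:BMO-Dir}, \ref{thm:fatou-ADEEDE}, and \ref{thm:fatou-VMO}, together with item $(c)$ of Theorem~\ref{thm:Poisson.II}. The key observation is that the hypothesis \eqref{Di-AK} is precisely condition \eqref{exist:f}, so the Poisson convolution $u(x',t):=(P^L_t\ast f)(x')$ is meaningfully defined via an absolutely convergent integral. Consequently, by \eqref{exist:u2}, for any fixed aperture parameter $\kappa>0$ the function $u$ belongs to $[\mathcal{C}^\infty(\mathbb{R}^n_+)]^M$, satisfies $Lu=0$ in $\mathbb{R}^n_+$, and one has $u\big|^{\kappa-{\rm n.t.}}_{\partial\mathbb{R}^n_+}=f$ at $\mathcal{L}^{n-1}$-a.e.\ point in $\mathbb{R}^{n-1}$. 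This identification of the nontangential boundary trace of $u$ with the original datum $f$ is the bridge permitting the transfer of information between $f$ and $u$ in both directions.

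For the forward direction of $(a)$, I would assume $f\in[\mathrm{BMO}(\mathbb{R}^{n-1})]^M$ and invoke Theorem~\ref{them:BMO-Dir}: since the nontangential trace of $u$ equals $f$, the function $u$ is the unique solution of the $\mathrm{BMO}$-Dirichlet problem \eqref{Dir-BVP-BMO}, and the right-hand side of \eqref{Dir-BVP-BMO-Car} yields $\|u\|_{**}\leq C\|f\|_{[\mathrm{BMO}(\mathbb{R}^{n-1})]^M}<\infty$. By \eqref{ncud} this is precisely the statement that $|\nabla u(x',t)|^2\,t\,dx'dt$ is a Carleson measure in $\mathbb{R}^n_+$. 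Conversely, if this measure is Carleson then $\|u\|_{**}<\infty$, so Theorem~\ref{thm:fatou-ADEEDE} applies and guarantees that $u\big|^{\kappa-{\rm n.t.}}_{\partial\mathbb{R}^n_+}$ exists a.e.\ and lies in $[\mathrm{BMO}(\mathbb{R}^{n-1})]^M$. Combining this with the a.e.\ identity $u\big|^{\kappa-{\rm n.t.}}_{\partial\mathbb{R}^n_+}=f$ from Theorem~\ref{thm:Poisson.II}$(c)$ yields $f\in[\mathrm{BMO}(\mathbb{R}^{n-1})]^M$, completing the equivalence in $(a)$.

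For item $(b)$ I would follow the same template in the refined $\mathrm{VMO}$ setting. If $f\in[\mathrm{VMO}(\mathbb{R}^{n-1})]^M$ then, by the already established $(a)$, $u$ satisfies $\|u\|_{**}<\infty$; since in addition $u$ has nontangential boundary trace $f$, it is the unique solution of the $\mathrm{VMO}$-Dirichlet problem \eqref{Dir-BVP-VMO}, and the equivalence \eqref{Dir-BVP-Reg} of Theorem~\ref{them:BMO-Dir}$(iv)$ forces $|\nabla u(x',t)|^2\,t\,dx'dt$ to be a vanishing Carleson measure. Conversely, if this vanishing Carleson condition holds then, in particular, $\|u\|_{**}<\infty$, so $u$ meets the hypotheses \eqref{Dir-BVP-VMOq1} of Theorem~\ref{thm:fatou-VMO}, and the implication \eqref{Dir-BVP-VMOq2} of that theorem places $u\big|^{\kappa-{\rm n.t.}}_{\partial\mathbb{R}^n_+}$ in $[\mathrm{VMO}(\mathbb{R}^{n-1})]^M$; reading this through the a.e.\ identification with $f$ completes the argument.

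I do not anticipate a serious obstacle beyond carefully tracking that under the mild integrability assumption \eqref{Di-AK} the Poisson convolution $u$ lies in the class of admissible competitors for the $\mathrm{BMO}$- and $\mathrm{VMO}$-Dirichlet boundary value problems, so that the quantitative and qualitative equivalences of Theorems~\ref{them:BMO-Dir}--\ref{thm:fatou-VMO} apply verbatim. The one subtlety is that in those theorems the solution class is described by the nontangential trace coinciding with the boundary datum, whereas here $f$ is only known to satisfy \eqref{Di-AK}; this gap is closed precisely by the Lebesgue-point identification supplied by \eqref{exist:u2}, which is the cornerstone of the proposed reduction.
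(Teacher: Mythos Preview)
Your proposal is correct. The paper itself does not give an explicit proof of Theorem~\ref{thm:FEFF}; it simply records the result as having been proved in \cite{BMO-MMMM}. Your argument assembles the theorem from the other results already stated in the section---Theorem~\ref{thm:Poisson.II}(c) for the a.e.\ identification $u\big|^{\kappa-{\rm n.t.}}_{\partial\mathbb{R}^n_+}=f$, Theorem~\ref{them:BMO-Dir} (specifically \eqref{eqn-Dir-BMO:u}--\eqref{Dir-BVP-BMO-Car} and \eqref{Dir-BVP-Reg}) for the forward implications, and Theorems~\ref{thm:fatou-ADEEDE} and~\ref{thm:fatou-VMO} for the reverse implications---and this is exactly the natural way to see Theorem~\ref{thm:FEFF} as a corollary of the machinery laid out in this survey.
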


\end{document}